\newcommand {\DR} {\mathbf{DR}}
\newcommand {\Hom} {\mathsf{Hom}}
\newcommand {\Tor} {\mathsf{Tor}}
\newcommand {\TFH} {\mathsf{TFH}}
\newcommand {\End} {\mathcal E\mathsf{nd}}
\newcommand {\map} {\mathsf{Map}}
\newcommand {\Type} {\textsf{Type}}
\newcommand  {\dg}     {\mathbf{dg}}
\newcommand  {\rank}     {\textsf{rank}}
\newcommand  {\medg}     {\epsilon-\mathbf{dg}^{gr}}
\newcommand  {\cdga}     {\mathrm{cdga}}
\newcommand  {\dAff}     {\mathbf{dAff}}
\newcommand  {\Mod}   {\mathrm{Mod}}
\newcommand  {\QCoh}   {\mathrm{QCoh}}
\newcommand  {\Bun}   {\mathrm{Bun}}
\newcommand  {\dSt}   {\mathbf{dSt}}
\DeclareMathOperator{\dAffit}{dAff}
\DeclareMathOperator{\Sym}{Sym}
\DeclareMathOperator{\Tot}{Tot}
\DeclareMathOperator{\Spec}{Spec}
\DeclareMathOperator{\codim}{codim}
\DeclareMathOperator{\Aut}{Aut}
\DeclareMathOperator{\Coh}{\mathsf{Coh}}
\DeclareMathOperator{\tr}{tr}
\theoremstyle{plain}
\numberwithin{equation}{section}
\newcommand{\spec}{{\rm Spec}\,}
\theoremstyle{definition}
\newtheorem{Def}{Definition}[section]
\newtheorem{Rem}[Def]{Remark}
\newtheorem{Rem*}[]{Remark}
\theoremstyle{plain}
\newtheorem{Prop}[Def]{Proposition}
\newtheorem{Thm}[Def]{Theorem}
\newtheorem{Lem}[Def]{Lemma}
\newtheorem{Cor}[Def]{Corollary}
\keywords{Higgs bundles, moduli spaces, vector bundles, logarithmic geometry, degenerations}
\subjclass[2020]{14H60, 14A21} 
\begin{document}
\title{Moduli stacks of Higgs bundles on stable curves}
\author{Oren Ben-Bassat, Sourav Das, Tony Pantev}
\date{\today}
\subjclass[2020]{Primary 14H60; Secondary 14D06} 
\keywords{Higgs bundles, moduli space, vector bundles, algebraic stack, Hitchin fibration, MSC2020}

\setlength{\abovedisplayskip}{16pt}
\setlength{\belowdisplayskip}{10pt}
\maketitle

\section*{\textbf{Abstract}}
We construct a flat degeneration of the derived moduli stack of Higgs bundles on smooth projective curves, using Jun Li’s stack of bounded expanded degenerations. The degeneration carries a natural relative zero shifted logarithmic symplectic form over the base, which extends the classical Hitchin symplectic structure from the generic fiber. The induced Hitchin map is shown to be complete (in the valuative sense) and flat, without requiring coprimeness of rank and degree. Finally, we extend the construction globally: the derived moduli stack of Higgs bundles over the universal semi-stable curve of genus greater than or equal to two carries a relative zero shifted log-symplectic form over the moduli stack of stable curves.

\tableofcontents

\section{\textbf{Introduction}}
It is well known that moduli stacks of vector bundles on nodal curves are not complete. Often, their completions involve adding torsion-free sheaves as points of the boundary. Because we want to use the mapping space techniques of shifted symplectic geometry, we adopt the alternative approach of completing the moduli by adding boundary points, which are bundles on a bubbling node. This approach goes back to the classical works of David Gieseker \cite{12} and Jun Li \cite{Li}. We will implement the approach in our setting by utilizing Jun Li's stack of expanded degenerations. This allows us to use only vector bundles instead of coherent torsion-free sheaves and apply techniques of shifted symplectic geometry \cite{PTVV}, obtaining a relative symplectic structure on a complete moduli stack. We extend the Hitchin map to Higgs bundles on the bubbled curves parametrized by the stack of expanded degenerations and then use the completeness of this moduli stack to prove that our extended Hitchin map is complete.
The derived moduli stack of Higgs bundles on a smooth curve $C$ has virtual dimension \[-\chi_{\mathrm{Dol}}(C, \End(E,\phi)) = n^2 \deg(c_1(T_C)) = 2n^2(g-1)\]
 where $g$ is the genus of $C$, while the derived moduli stack of vector bundles (or any other Lagrangian in the derived moduli stack of the Higgs bundle) has a virtual dimension \[-\chi(Y, \End(E))=\int_{C} \text{ch}(\End(E))\text{td}(C)=n^2(g-1).\] These moduli stacks are homotopically locally of finite presentation (see \cite[\S 2.2.6.3]{TV} and \cite{32a}), and so exhibit the "hidden smoothness" envisioned in the derived deformation theory. Given this, it is natural to study other geometric properties of these moduli. This article investigates the relative $0$-shifted symplectic structures in the total space of a degeneration of $C$ to a nodal curve.

After writing this article, we realized that there are many relationships with the articles \cite{BDD}, \cite{BDDP} and significant overlap with a forthcoming article \cite{DF} of Ron Donagi and Andres Fernandez Herrero. In their work, they give a construction of a good moduli space for the semistable locus and semistable reduction relative to the Hitchin fibration (so the moduli space is proper over the Hitchin base in families). This uses the ``infinite dimensional GIT" picture developed with Dan Halpern-Leistner. They also showed the flatness of the Hitchin morphism and that things are syntomic (as classical stacks), including the symplectic leaves. They are pursuing the story with punctures, thinking about the log Poisson reduction picture for the relative log cotangent stack of framed Gieseker vector bundles.

To start with, we fix a family of projective curves $\mathcal X$ on the spectrum of a discrete valuation ring $S$ so that the generic fiber is smooth, the closed fiber is an irreducible nodal curve with a single node, and the total space $\mathcal X$ is smooth. We also fix a rank $n$ and degree $d$ for the vector bundles in our moduli problem. In Section \S2, we recall the construction of the stack $\mathfrak M$ of bounded expanded degenerations (bounded by the integer $n$) of the family of curves $\mathcal X/S$ following \cite{Li} and \cite{Z}. One of the main results in this subsection is the lemma \ref{et1}, which we use to prove the following proposition.

\

\noindent
{\bfseries Proposition~\ref{rellog101}} \ {\em
The morphism $\mathfrak M\longrightarrow S$ is a log-smooth map, and the relative log-cotangent complex $\mathbb L^{log}_{\mathfrak M/S}=0$. }

\

In subsection \S2.3, we discuss shifted log-symplectic structures on quasi-smooth derived Artin stacks (Definition \ref{qs}) equipped with a locally free log structure. In subsection \S2.4, we recall the definition of relative shifted symplectic forms for a quasi-smooth morphism of derived Artin stacks. We define relative shifted log-symplectic forms for certain logarithmic morphisms of derived Artin stacks equipped with locally-free log structures.
In Section \S3, we construct a logarithmic version of the relative Dolbeault moduli stack for the universal expanded family of curves $\mathcal \mathcal X_{\mathfrak M}\rightarrow \mathfrak M$. We show that the relative logarithmic Dolbeault moduli stack has a relative $0$-shifted log-symplectic form over $S$. Moreover, we show that the relative log-symplectic form is an extension of Hitchin's symplectic form on the generic fibre of the moduli stack over $S$. This was proved for moduli schemes in \cite{D}. The main results of this section are the following.

\

\noindent
{\bfseries Theorem~\ref{main}} \ {\em $\mathcal{X}_{ Dol}$ is $\mathcal{O}$-compact and $\mathcal O$-oriented over $S$. Hence,
$\map_{\mathcal{S}}(\mathcal X_{ Dol}, BGL_n\times \mathcal S)$ has a $0$-shifted relative symplectic structure over $S$.}

\

\noindent
{\bfseries Theorem~\ref{main112}} \ {\em The derived Artin stack $\map_{_{\mathfrak M}}(\mathcal X_{\mathfrak M, Dol}, BGL_n\times \mathfrak M)$ has a natural relative $0$-shifted log-symplectic structure over $S$.}

\

\noindent
In Section \S4, we define the Hitchin map on the classical Artin stack of Gieseker-Higgs bundles $M^{cl}_{Gie}$ (see \ref{Gies21}). We prove that the Hitchin map is complete.

\

\noindent
{\bfseries Theorem~\ref{complete11}} \ {\em The morphism $h|_{M^{cl}_{Gie}} \colon M^{cl}_{Gie} \longrightarrow B$ is complete.}

\

This result was proved in \cite{2a} for the Hitchin map on the moduli scheme where rank and degree are co-prime. We prove it here for the moduli stack, and the argument does not require us to assume that the rank and degree are coprime.

In section \S5, we study the reduced global nilpotent cone of $M^{cl}_{Gie}$, which is the reduction of the scheme theoretic fibre over the point $0\in B$. We prove that every irreducible component of the reduced nilpotent cone has an open subset (denoted by $\mathcal Nilp^{sm, gen}$), which is an isotropic substack of $M$ (the derived stack of Higgs bundles) with respect to its log-symplectic form. We use this to compute the dimension of the reduced nilpotent cone and to show that the Hitchin map is flat. The main theorem of this section is as follows.

\

\noindent
{\bfseries Theorem~\ref{isotropic112}}
{\em \begin{enumerate}
\item The Hitchin map $h: M^{cl}_{Gie}\longrightarrow B$ is surjective.
\item The sub-stack $\mathcal Nilp^{sm,gen}$ is relatively isotropic over $S$.
\item The Hitchin map $h: M^{cl}_{Gie}\longrightarrow B$ is flat.
\end{enumerate}
}

\

In Section \S6, we construct the Gieseker-like derived moduli stack of Higgs bundles $\mathcal M^{Dol}_g$ over the moduli stack of stable curves of genus $g\geq 2$. We prove the following theorem.

\

\noindent
{\bfseries Theorem~\ref{main1567}} \ {\em
There is a $0$-shifted relative log-symplectic form on $\mathcal M^{Dol}_g$ (relative to the moduli stack of stable curves
$\overline{\mathcal M_g}$).
}

\

In the Appendix, we construct the relative classical Artin stack of Gieseker-Higgs bundles and study its local properties. The main results of the appendix are Proposition \ref{new1} and Theorem \ref{dim111}. In the first lemma, we prove that the stack of Gieseker vector bundles is an almost very good stack. We use this in Lemma \ref{dim111} to show that the classical stack of Gieseker-Higgs bundles is an irreducible, local complete intersection.

\

\noindent
{\bfseries Proposition \ref{new1}} \ {\em
The closed fibre $N^{cl}_{Gie,0}$ is an irreducible, equidimensional, almost very good stack (\cite[Definition 2.1.2]{32e}) with normal crossing singularities.
}

\

\noindent
{\bfseries Theorem \ref{dim111}} \ {\em
The stack $M^{cl}_{Gie,0}$ is an irreducible local complete intersection of pure dimension $2\dim N_{Gie,0} + 1$.
}

\

\subsection{\textbf{Notations and Conventions}}
\begin{enumerate}\label{ConNot}
\item[$\bullet$] $\mathbb k$ be an algebraically closed field of characteristic zero. Every geometric object lives over $\Spec(\mathbb k)$. However, the $\mathbb k$ is usually suppressed from the notation.

\item[$\bullet$] $S:=\{\eta, o\}$ denotes the spectrum of a complete discrete valuation ring over $\mathbb k$, where $\eta$ denotes the generic point, and $o= \Spec(\mathbb k)$ denotes the closed point.

\item[$\bullet$] $\mathcal X \to S$ denotes a flat family of curves whose generic fibre is smooth projective, and the closed fibre is a nodal curve with a single node. We denote the nodal curve by $X_0$ and the node by $x$. We denote its normalisation by $q:\widetilde X_0\longrightarrow X_0$ and the two pre-images of the node $x$ by $\{x^+,x^{-}\}$. 

\item[$\bullet$] $\dg$ is the category of dg-modules over $\mathbb k$ (i.e. of
complexes of $\mathbb k$-modules). By convention, the differential of an
object in $\dg$ \emph{increases} degrees.  So an object is a cochain complex of the form \[\cdots \to E_{-1} \to E_0 \to E_1 \to \cdots. \]

\item[$\bullet$] $\cdga$ is the category of commutative dg-algebras over $\mathbb k$,
and $\cdga^{\leq 0}$ its full subcategory of non-positively graded
commutative dg-algebras.

\item[$\bullet$] $dg$, $\cdga$ (respectively $\cdga^{\leq 0}$) are
  endowed with their natural model structures for which equivalences
  are quasi-isomorphisms, and fibrations are epimorphisms
  (respectively, epimorphisms in strictly negative degrees).

\item[$\bullet$] ${\dAffit}:=(\cdga^{\leq 0})^{op}$ is the category of derived affine $\mathbb k$-schemes.

\item[$\bullet$] The $\infty$-categories associated with the model categories $dg, \cdga^{\leq0}, \dAffit$ are denoted by $\textbf{dg}, \textbf{cdga}^{\leq0}, \textbf{dAff} $.

\item[$\bullet$] The $\infty$-category of simplicial sets is denoted by $\mathbb{S}$. It is also called the $\infty$-category of spaces, and space will be used to mean a simplicial set.

\item[$\bullet$] The $\infty$-category of derived stacks over $\mathbb k$, for the \'{e}tale topology, is denoted by $\textbf{dSt}$. If $X$ is a derived stack, the $\infty$-category of derived stacks over $X$ is denoted by $\textbf{dSt}_X$. The classical truncation of a derived stack $X$ is denoted by $t_0 X$.

\item[$\bullet$] Our running convention is that unless explicitly specified otherwise a \linebreak \emph{\bfseries (derived)  Artin stack} will always mean a \emph{\bfseries (derived) geometric 
$1$-stack}. In particular, all the moduli stacks we consider are geometric $1$-stacks.

\item[$\bullet$] For a family of semistable curves $\mathfrak X$ over a scheme $T$, $\mathfrak X_{Dol}$ denotes the relative logarithmic Dolbeault shape of $\mathfrak X/T$.
\begin{center}
\begin{tikzcd}
\mathfrak X_{Dol}\arrow["q"]{r}\arrow[bend right=20,swap, "p"]{rr}  & \mathfrak X \arrow["r"]{r} & T
\end{tikzcd}
\end{center}

\end{enumerate}
\section{\textbf{Acknowledgements}}
T.P. (University of Pennsylvania) was partially supported by NSF FRG grant DMS-
2244978, NSF/BSF grant DMS-2200914, NSF grant DMS-1901876, and Simons Collaboration grant number 347070. O.B. (University of Haifa) and S.D. would like to acknowledge the Binational Science Foundation and NSF/BSF grant DMS-2200914 also known as BSF Grant 2021717 for supporting S.D. as a postdoc. S.D. would like to thank Professor Alek Vainshtein of the University of Haifa, Israel, for the financial support from his Israel Science Foundation grant number 876/20 during a postdoc position at the University of Haifa. S.D. would like to thank Professor Vikraman Balaji for the financial support from his SERB Core Research Grant CRG/2022/000051-G during a postdoc position at Chennai Mathematical Institute. S.D. also thanks IISER Tirupati for their support during a part of this work. 
\section{\textbf{Preliminaries}}

\subsection{\textbf{Space of bounded expanded degenerations}}
In this subsection, we recall a construction by Jun Li \cite{Li} called the stack of expanded degenerations. We will use this stack to construct our degeneration. The construction of Jun Li starts with a one-parameter family of varieties (of any dimension) such that the total space of the family is smooth, the generic fibre is smooth, and the special fibre is a normal crossing divisor in the total space of the family. For our purposes, we only need to consider the expanded degenerations of such a family for curves. We will make use of the standard fact that for any nodal curve, one can always construct a smoothing over the spectrum of a discrete valuation ring, which also has a smooth total space. The precise setting we consider can be explained below.

\subsubsection{\textbf{Degeneration of curves.}}\label{DegeCourbes} Start with a flat family of projective curves $\mathcal X\longrightarrow S$, such that 
\begin{enumerate}
\item generic fiber $\mathcal X_{\eta}$ is a smooth curve of genus $g\geq 2$, 
\item the closed fibre is a nodal curve $X_0$ with a single node $x \in X_{0}$, and
\item the total space $\mathcal X$ is regular over $\spec \mathbb k$. 
\end{enumerate}

\begin{Rem}
Given any nodal curve, such a family of one-parameter degeneration of curves can always be constructed where the given nodal curve is placed as the closed fiber \cite[Theorem B.2 and Corollary B.3, Appendix B]{M1}. On the other hand, given any smooth proper family of curves $\mathcal X_K$ over $\Spec K$, where $K$ is the function field of the discrete valuation ring $S$, one can always extend (possibly after taking a finite cover of $S$) it to a family of stable curves with nodal singularities(possibly with several nodes) \cite[Theorem 109.24.3.]{34}. Also note that Jun Li's construction of the stack of expanded degenerations also works for the case when the closed fiber has more than one node (see \cite[page 516]{Li}).   
\end{Rem}

Let us denote the relative dualizing sheaf by $\omega_{\mathcal X/S}$. \textbf{From now on, we set $S:=\Spec~ \mathbb k[t]_{(t)}$, which represents an open subset of the affine line $\mathbb A^1_{\mathbb k}$.} 

\smallskip

\begin{Def}\label{gis}(\textbf{Gieseker curve/Expanded degeneration/Modification}) 
Let $X_{0}$ be a nodal curve with a single node $x \in X_{0}$, and let $x^{\pm}$ label the two preimages of
$x$ in normalization $\widetilde{X}_{0}$ of $X_{0}$. Let $r$ be a positive integer.
\begin{enumerate}
\item A \emph{\bfseries chain of $r$ projective lines} is a scheme $R[r]$ of the form
$\cup_{i=1}^{^r} R[r]_{i}$ such that 
\begin{enumerate}
\item $R[r]_{i}\cong \mathbb{P}^{^1}$, 
\item for any $i<j$, $R[r]_{i}\cap R[r]_{j}$ consists of a single point $p_j$ if $j=i+1$ and is empty otherwise. 
\end{enumerate}

We call $r$ the length of the chain $R[r]$. Let us choose and fix two smooth points $p_1$ and $p_{r+1}$ on $R[r]_1$ and $R[r]_{r}$, respectively. 

\vspace{1cm}
\begin{tikzpicture}[overlay, xshift= 1cm,scale=0.70]
\draw node[yshift=-1ex]{$p_1$}(0,0) -- (3,.5) node[yshift=1ex]{}; \draw (2,.5) -- (5,0)node[yshift=-1ex]{}; \draw (4,0) -- (7,.5); \draw[dotted] (7,.25) -- (8,.25);\draw (8,0) -- node[yshift=1.5ex]{}(11,.5); \draw (10,.5) -- (13,0)node[yshift=-1ex]{}; \draw (12,0) -- (15,.5) node[yshift=1ex]{$p_{r+1}$};
\end{tikzpicture}
\vspace{1cm}

\item A \emph{\bfseries Gieseker curve} $X_r$ is the categorical quotient of the disjoint union of the curves $\tilde{X}_0$ and $R[r]$ obtained by identifying $x^+$ with $p_1$ and $x^-$ with $p_{r+1}$. We will also refer to these curves as \emph{\bfseries expanded degenerations} or \emph{\bfseries bubblings} of the nodal curve $X_0$.
\end{enumerate}
\end{Def}

\begin{Def}(\textbf{Family of Gieseker curves / expanded degenerations / modifications:\cite[Definition 3.4]{2a}}) \label{gis120} For every $S$-scheme $T$, a \emph{\bfseries modification of $\mathcal{X}$} relative to $T$  is a commutative diagram
\begin{equation}
\begin{tikzcd}
\mathfrak X_T\arrow{dr}[swap]{p_T}\arrow{rr}{\pi_T}&& \mathcal X_T:=\mathcal X\times_S T\arrow{dl}\\
& T
\end{tikzcd}
\end{equation}
such that
\begin{enumerate}
\item $p_T : \mathfrak X_T \longrightarrow T$ is flat;
\item the horizontal morphism is finitely presented and  is an isomorphism over smooth  fibers $(\mathcal X_T )_t$ of $\mathcal{X}_{T} \to T$;
\item over each closed point $t\in T$ that maps to  $\eta_0\in S$, we have $(\mathfrak X_T)_t$ is isomorphic to a Gieseker curve $X_r$ for some integer $r$ and the horizontal morphism restricts to the morphism $X_{r} \to X_{0}$ that contracts the $\mathbb P^1$s in  $X_r$ to the node $x \in X_{0}$.
\end{enumerate}
\end{Def}

\smallskip

\begin{Def}(\textbf{Morphisms of families of expanded degenerations:})
Let $T \to S$ be a scheme mapping to $S$, and let $\mathfrak X_T$ and ${\mathfrak{X}'}_{T}$ be two $T$-relative  modifications of $\mathcal{X} \to S$.

We call $\mathfrak X_T$ and $\mathfrak X'_T$ \emph{\bfseries isomorphic} if there exists an isomorphism $\sigma_T: \mathfrak X_T\longrightarrow \mathfrak X'_T$ such that the following diagram commutes
\begin{equation}
\begin{tikzcd}
\mathfrak X_T\arrow{dr}[swap]{\pi_T} \arrow{rr}{\sigma_T} && \mathfrak X'_T\arrow{dl}{\pi'_T}\\
& \mathcal X_T
\end{tikzcd}
\end{equation}
\end{Def}

\smallskip

\begin{Rem}
The definition of a modification or Gieseker curve is stated slightly differently from the definition given in Gieseker's original paper (see \cite[(i) and (iii) of the first paragraph of \S 4]{12}. The definition given in this paper is the same as the Gieseker's definition (see \cite[(ii) of Definition 7; Lemma. on page 200]{25}). For the deformation theory of Gieseker curves (i.e., the description of such families of modifications over Artin rings), we refer to \cite[Proposition 4.5. ]{12}, and \cite[Local Theory, Page 197]{25}. 
\end{Rem}

\subsubsection{\textbf{Space of expanded degenerations}}
As before, we will choose an uniformizer in $S$ at the closed point and if necessary replace it by $S:=\Spec~ \mathbb k[t]_{(t)}$, which represents an open neibourhood of the origin of the affine line $\mathbb A^1_{\mathbb k}$. For any positive integer $n$, we set
\begin{equation}
S[n]:=S\times_{\mathbb A^1} \mathbb A^{n+1}
\end{equation}
where the map $\mathbb A^{n+1}\longrightarrow \mathbb A^1$ is given by $(t_1, \dots, t_{n+1})\mapsto t_1\cdots t_{n+1}$. 

Consider the group
\begin{equation}
G[n]:=\mathbb G_m^{n}
\end{equation}
acting on $\mathbb A^{n+1}$ by 
\begin{equation} \label{eq:S[n]action}
(\sigma_1, \ldots, \sigma_n)\cdot (t_1, \ldots, t_{n+1}):=(\sigma_1\cdot t_1,\ldots, \sigma_{i-1}^{-1}\cdot\sigma_{i}\cdot t_i, \ldots, \sigma_{n}^{-1}\cdot t_{n+1}).
\end{equation}
The map $\mathbb{A}^{n+1} \to \mathbb{A}^{1}$, $(t_{1},\ldots,t_{n+1}) \mapsto t_{1}t_{2}\cdots t_{n+1}$ intertwines this action with the trivial action on $\mathbb{A}^{1}$ and hence 
\eqref{eq:S[n]action} induces an action of $G[n]$ on $S[n]$.

In \cite[Section 1.1]{Li}, Jun Li constructed a $G[n]$-equivariant family of expanded degenerations $W[n]$ over $S[n]$. It is constructed by several birational transformations of the family $\mathcal X\times_S S[n]$. The curves that occur in the family are all possible expanded degenerations of the family $\mathcal X/S$ for which the length of rational chains is bounded by $n$.

\smallskip

\textbf{Notation: To be consistent with our notations, from now on, we will denote Jun Li's family of expanded degenerations by $\mathfrak X[n]$ over $S[n]$.}

\subsubsection{\textbf{Stack of bounded expanded degenerations}} 
The significance of the family $\mathfrak X[n] \to S[n]$ is that it can be used as an atlas for the universal family of expanded degenerations with bubbling of length $\leq n$.

Indeed, recall from \cite[Definition 1.9, Proposition 1.10]{Li} the following
\begin{Def}\label{Exp1123} The \emph{\bfseries stack of expanded degenerations} of $\mathcal{X}/S$ is the stack
$$
\mathfrak{M}: {Sch}/S\longrightarrow {Groupoids} 
$$ 
given by the assignment
\begin{equation}
T
\mapsto \left\{
\text{
\begin{minipage}[c]{4.2in}
Pairs $(\mathfrak X_T, \pi)$ where $T$ is a scheme, $\mathfrak X_T\to T$ is a family of projective curves, and $\pi: \mathfrak X_T\to \mathcal X$ is a morphism over $S$ such that there exists an \'{e}tale cover
$\widetilde{T}\to T$ and a map $\widetilde{T}\to S[n]$ so that  $\mathfrak X_T\times_T \widetilde{T}\cong \mathfrak X[n]\times_{S[n]} \widetilde{T}$
\end{minipage}}
\right\}
\end{equation}

\smallskip

\noindent
Two such families $\mathfrak X_T$ and $\mathfrak X'_{T}$ are isomorphic if there is a $T$-isomorphism \linebreak $f: \mathfrak X_T \longrightarrow \mathfrak X'_{T}$ compatible with tautological projections $\mathfrak X_T \longrightarrow \mathcal X$ and $\mathfrak X'_T \longrightarrow \mathcal X$.
\end{Def}

\begin{Rem}
By construction, the stack of expanded degenerations has the following properties 
\begin{enumerate}
\item $\mathfrak M$ is a smooth Artin stack of finite type.
\item The projection map $\mathfrak M\longrightarrow S$ is generically an isomorphism.
\item The closed fibre of $
\mathfrak{M} \to S$ is a normal crossing divisor in $\mathfrak{M}$.  
\end{enumerate}
\end{Rem}

The following properties are obtained from the construction / Definition of the expanded degeneration stack as the quotient of $\mathbb A^{n+1}$ by a smooth equivalence relation (see \cite[page 16, Def. 2.22.]{Z}). 

\smallskip

Here we use the following. 
\begin{Def}
Let $\mathcal Y$ be a smooth Artin stack and $\mathcal D$ be a closed sub-stack of co-dimension one. We say that $\mathcal D$ is a \emph{\bfseries normal crossing divisor} in $\mathcal{Y}$ if for any smooth morphism $f: T\rightarrow \mathcal Y$ from a smooth scheme $T$, the pull-back of the divisor $\mathcal D\times_{\mathcal Y} T$ is a normal crossing divisor in $T$.
\end{Def}

\subsubsection{\textbf{An alternative construction of the stack \texorpdfstring{$\mathfrak M$}{M}}}In \cite[Definition 2.22]{Z}, Zijun Zhou gave an alternative construction of the stack $\mathfrak M$ which will be useful for our purposes. Here we briefly recall his construction.

\smallskip

First, let us introduce some useful terminology and notation. For any integer $n$, we set $[n]:=\{1, \dots, n+1\}$. It will be useful to have short names for the natural maps between the spaces $S[k]$ for various values of $k$.

\begin{Def}
Given any subset $I\subseteq [n]$ of cardinality, say $k+1$, we define
\begin{itemize}
\item The \emph{\bfseries Standard Embedding} corresponding to  $I$ is the embedding
\begin{equation}
\gamma_I: S[k]\hookrightarrow S[n]
\end{equation}
given by $\gamma_I(t_1,\dots, t_{k+1})=(z_1, \dots, z_{n+1})$, where

$$
(z_1, \dots, z_{n+1})=\begin{cases}
			t_i, & \text{when} ~~i\in I\\
            1, & \text{when} ~~ i\in [n]\setminus I
		 \end{cases}
$$

Notice that $S[k]:=S\times_{\mathbb A^1} \mathbb A^{k+1}$ and $(t_1, \dots, t_{k+1})$ denote a point of $S[k]$. 

\smallskip

\item The \emph{\bfseries Standard Open Embedding} corresponding to $I$   is the open embedding
\begin{equation}
\tau_I: S[k]\times \mathbb{G}_m^{n-k}\hookrightarrow S[n]
\end{equation}
where $S[k]\times \mathbb{G}_m^{n-k}$ denotes the open subset swept  by $S[k]$ under  the free action of the subgroup $\mathbb{G}_m^{n-k}\subset G[n]$ whose $j$-th component is $1$ if $j\notin I$.
\end{itemize}
\end{Def}

\smallskip

\begin{Def}\label{Def.2.6}
\begin{description}
\item[(i)] Given two subsets $I$ and $I'$ of $[n]$ both of order $k+1$, define the equivalence relation $R_{I,I'}$ on $S[n]$  by setting 
\begin{equation}
R_{I,I'} = S[k]\times \mathbb{G}_m^{n-k}\rightrightarrows S[n]
\end{equation}
where the two maps are $\tau_I$ and $\tau_{I'}$.
\item[(ii)]  For every $k \leq 0$ define a discrete equivalence relation $R_{k} \rightrightarrows S[n]$ as
\begin{equation}
R_{k}:=\coprod_{|I|=|I'|=k+1, I, I'\subseteq [n]} R_{I, I'}  \ \rightrightarrows \ S[n].
\end{equation}
\item[(iii)] Define the amalgamated discrete equivalence relation $R_{dis}$ on $S[n]$ by setting
\begin{equation}
R_{dis}:=\coprod_{k\leq n} R_{k} \ \rightrightarrows \ S[n]
\end{equation}
\item[(iv)] Finally, we define the total smooth groupoid 
$$R_{tot}:=\mathbb{G}_m^n\times R_{dis} \rightrightarrows S[n]$$ 
which is generated by $R_{dis}$ and the action of $\mathbb{G}_m^n$ on $S[n]$.
\end{description}
\end{Def}

\smallskip

\begin{Def}\label{Def.2.7} Let 
$\mathfrak M_n:=\left[S[n]/R_{tot}\right]$
be the quotient stack of $S[n]$ by the smooth groupoid $R_{tot} \rightrightarrows S[n]$.
\end{Def}

\smallskip

\noindent
From \cite[Remark 2.23]{Z}, it follows that the stack $\mathfrak M_n$ is the stack of expanded degenerations of the family $\mathcal X/S$ bounded by the integer $n$. 

\smallskip

\noindent
{\textbf{Notation:}} \ {From here onwards, we will only work with $\mathfrak M_n$ and we drop the subscript $"n"$ from the notation and denote it simply by $\mathfrak M$.}

\smallskip

\begin{Lem}\label{et1}
The morphism between quotient stacks 
\begin{equation}
[S[n]/G[n]]\longrightarrow \mathfrak M
\end{equation}
is \'{e}tale.
\end{Lem}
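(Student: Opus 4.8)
The plan is to exhibit an explicit étale cover and compare the two groupoid presentations. Recall that $\mathfrak M = [R_{tot} \rightrightarrows S[n]]$ where $R_{tot} = \mathbb G_m^n \times R_{dis}$, and that $R_{dis} = \coprod_{k \leq n} R_k$ with each $R_k$ a disjoint union of the copies $R_{I,I'} = S[k] \times \mathbb G_m^{n-k}$ glued in via the standard open embeddings $\tau_I, \tau_{I'}$. On the other hand $[S[n]/G[n]]$ is the quotient by the sub-groupoid $\mathbb G_m^n \times S[n] \rightrightarrows S[n]$ (action and projection). So the tautological map $[S[n]/G[n]] \to \mathfrak M$ is precisely the map of quotient stacks induced by the inclusion of groupoids $\bigl(\mathbb G_m^n \times S[n] \rightrightarrows S[n]\bigr) \hookrightarrow \bigl(R_{tot} \rightrightarrows S[n]\bigr)$ over the identity on objects; the extra relations come from the discrete part $R_{dis}$, i.e. from the standard open embeddings $\tau_I$.

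First I would reduce the problem to a statement about $R_{dis}$. A map of quotient stacks $[H \rightrightarrows U] \to [G \rightrightarrows U]$ induced by a subgroupoid inclusion over $\operatorname{id}_U$ is étale (resp. an open immersion, smooth, etc.) precisely when, after base change to $U$, the map $U \to [G\rightrightarrows U]\times_{[H\rightrightarrows U]} U$ has that property; concretely this is controlled by the map $G \to U \times_{[H\rightrightarrows U]} U$, i.e. one must check that $G$, viewed as a $U \times U$-scheme via $(s,t)$, is obtained from $H$ by an étale (indeed, open) "saturation". Since the difference between $R_{tot}$ and $\mathbb G_m^n\times S[n]$ is exactly the discrete equivalence relation $R_{dis}$, it suffices to show that each generating piece $R_{I,I'} = S[k]\times \mathbb G_m^{n-k} \rightrightarrows S[n]$, with structure maps $(\tau_I,\tau_{I'})$, is an \emph{open} sub-relation — i.e. $\tau_I$ and $\tau_{I'}$ are open immersions with the stated images, which is precisely the content of the Definition of the Standard Open Embedding. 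Then composing: $[S[n]/G[n]] \to \mathfrak M$ is, Zariski-locally on the source, the inclusion of a distinguished open chart $S[k]\times \mathbb G_m^{n-k}$ into another such chart, hence étale (in fact a local isomorphism onto a union of open charts).

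Concretely, I would proceed as follows. (1) Write down the $2$-categorical fibre product $[S[n]/G[n]] \times_{\mathfrak M} [S[n]/G[n]]$ and identify it, using the standard description of fibre products of quotient stacks, with the quotient $[(\mathbb G_m^n \times_{R_{dis}}\text{-completion})\rightrightarrows S[n]\times S[n]]$; more usefully, base-change along the atlas $S[n] \to [S[n]/G[n]]$ to get the algebraic space $S[n] \times_{\mathfrak M} S[n]$, which by Zhou's presentation is exactly $R_{tot} = \mathbb G_m^n \times R_{dis}$. (2) Observe that $S[n]\times_{[S[n]/G[n]]} S[n] = \mathbb G_m^n \times S[n]$ sits inside $R_{tot}$ as the $k=n$ summand (where $R_{I,I'}$ with $|I| = |I'| = n+1$ forces $I = I' = [n]$ and $\tau_I = \operatorname{id}$). (3) Conclude that the relative diagonal is the open-and-closed inclusion $\mathbb G_m^n\times S[n] \hookrightarrow \mathbb G_m^n \times R_{dis}$; since $R_{dis} \to S[n]$ (via either projection) is étale — each $\tau_I$ being an open immersion — the map $S[n]\times_{\mathfrak M} S[n] \to S[n]\times_{[S[n]/G[n]]} S[n]$ is étale, and the two projections from $R_{tot}$ are smooth surjective, so by descent $[S[n]/G[n]] \to \mathfrak M$ is étale.

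The main obstacle is step (1)/(2): one must be careful that the groupoid $R_{tot} \rightrightarrows S[n]$ in Zhou's construction really does compute $S[n]\times_{\mathfrak M} S[n]$ on the nose (as opposed to up to further saturation), and that the generators $R_{I,I'}$ are genuinely \emph{open} embeddings rather than merely étale — this is where the precise geometry of the $G[n]$-action on $S[n]$ and the combinatorics of the subsets $I \subseteq [n]$ enter. Once it is confirmed that each $\tau_I \colon S[k]\times \mathbb G_m^{n-k} \hookrightarrow S[n]$ is an open immersion onto the locus where the coordinates $t_j$, $j \notin I$, are invertible, everything else is formal groupoid bookkeeping. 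I would also double-check the edge cases (the empty chain $k$, the full set $I = [n]$) to make sure the $k=n$ summand of $R_{dis}$ is exactly the trivial relation, so that no spurious identifications are introduced on the $[S[n]/G[n]]$ side.
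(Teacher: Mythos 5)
Your proposal is correct and follows essentially the same route as the paper: both arguments come down to the fact that the standard open embeddings $\tau_I$ are Zariski open immersions, so the discrete relation $R_{dis}$ is an \'{e}tale equivalence relation and $\mathfrak M$ is the quotient of $[S[n]/G[n]]$ by it. The only ingredient you flag but do not carry out --- that $R_{dis}$ is compatible with the $G[n]$-action and hence descends to $[S[n]/G[n]]$ --- is exactly what the paper checks explicitly, by embedding $G[n]$ into the coordinatewise torus $\mathbb G_m^{n+1}$ and verifying that each $\tau_I$ is equivariant for the larger group.
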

\begin{proof}
Let us first show that the relation $R_{dis}$ descends to an equivalence relation on the quotient stack $[S[n]/G[n]]$).
Recall that $R_{dis}:=\underset{k\leq n}\coprod R_{k}$ and \[R_{k}:=\underset{|I|=|I'|=k+1, I, I\subseteq [n]}\coprod R_{I, I'},\] where 
the equivalence relations $R_{I,I'}$ are given by 
\begin{equation}
S[k]\times \mathbb{G}_m^{n-k}\rightrightarrows S[n]
\end{equation}
Recall that the map $R_I: S[k]\times \mathbb{G}_m^{n-k}\longrightarrow S[n]$ is given by the standard open embedding. 

For simplicity, assume that $S=\mathbb A^1$. Then $S[n]=\mathbb A^{n+1}$. Notice that $G[n]:=\mathbb G_m^{n}$. But in fact, there is an action of a larger group $\mathbb G_m^{n+1}$ on $\mathbb A^{n+1}$ given by coordinate multiplication. Moreover, the group $\mathbb G_m^n$ can be identified as a subgroup of $\mathbb G_m^{n+1}$ consisting of elements of the form $(t_1, \dots, t_{n+1})$ such that $\prod^{n+1}_{i=1} t_i=1$. Let us consider the action of the larger group $G[n+1]$. 

Let us first define the natural action of $G[n+1]$ on $R_I:=\mathbb A^{k+1}\times \mathbb{G}_m^{n-k}$. Notice that the cardinality of the set $I$ is equal to $k+1$. It is an ordered subset of $[n+1]$. Therefore, the complement $J:=[n+1]\setminus I$ is also an ordered set of cardinality $n-k$, where the order is the induced order of $[n+1]$. Now we see that $\mathbb G_m^{n+1}\cong \mathbb G_m^{I}\times \mathbb G_m^J$. Finally, we define the action of $\mathbb G_m^{n+1}$ on $R_I:=\mathbb A^I\times \mathbb{G}_m^J$ by the obvious action of $\mathbb G_m^{I}\times \mathbb G_m^J$. It is easy to check that the map $R_I: \mathbb A^{k+1}\times (\mathbb k^*)^{n-k}\longrightarrow \mathbb A^{n+1}$ is $\mathbb G_m^{n+1}$-equivariant. Hence, it is also equivariant under the action of the smaller subgroup $\mathbb G_m^{n}$. Therefore, the equivalence relations $R_{I, I'}$ descend to the quotient stack $[S[n]/G[n]]$.

Now we will show that the relation $\underline{R}_{dis}$ is an \'{e}tale equivalence relation. First of all, note that the maps $R_I: S[k]\times \mathbb{G}_m^{n-k}\longrightarrow S[n]$ are open immersions. Therefore, $R_{k}:=\coprod_{|I|=|I'|=k+1, I, I\subseteq [n]} R_{I, I'}\rightrightarrows \mathbb A^{k+1}$ is an \'{e}tale equivalence relation because both the projections are Zariski open immersions. Therefore,  $R_{dis}$ on $S[n]$ and $[R_{dis}/G[n]]$ on $[S[n]/G[n]]$ define an equivalence relation \'{e}tale equivalence relation.
\end{proof}

\smallskip

\subsection{\textbf{Construction of the family of curves}}\label{Fam} In this subsection, we will briefly recall from \cite{Li, Z} the fact that there is a family of expanded degeneration (whose rational chain has a length bounded by $n$). Let us choose a neighborhood \'{e}tale $U_p$ of node $p\in \mathcal X$ such that the nodal curve \linebreak $U_p\times_{\mathcal X} X_0$ is a reducible nodal curve with two smooth connected components intersecting transversely at node $p$. Let $V = \mathcal{X} - \{p\}$. Then $\{U_{p},V\}$ is an 
\'{e}tale covering of $\mathcal{X}$. We have the following diagram (which is both cartesian and co-cartesian/push-out):

\begin{equation}
\begin{tikzcd}
U_p\times_{\mathcal X} V\arrow{d} \arrow{r} & V \arrow{d}\\
U_p\arrow{r} & \mathcal X
\end{tikzcd}
\end{equation}
Notice that $U_p\longrightarrow S$ is a simple normal crossing degeneration that we can feed into the construction of \cite{Li, Z} to produce a family of expanded degenerations. We denote this family by $U_p[n]\longrightarrow S[n]$ and also write $V[n]:=V\times_{S} S[n]$. Now we construct the family of expanded degenerations, denoted by $\mathcal{X}[n]$ over $S[n]$, by the following push-out
\begin{equation}
\begin{tikzcd}
U_p[n]\times_{(\mathcal X\times_S S[n])} V[n]\arrow{d} \arrow{r} &  V[n] \arrow{d}\\
U_p[n]\arrow{r} & \mathcal X[n]
\end{tikzcd}
\end{equation}
The pushout exists as an algebraic space according to Rydh's pushout theorem \cite[Theorem C]{R}.
The family of curves $\mathcal X[n]\longrightarrow S[n]$ is the desired family of expanded degenerations of the original family $\mathcal X\longrightarrow S$. 

We refer to \cite[Proposition 2.13]{Z} for the key properties of the expanded degeneration family. In particular, by properties (3) and (4) of \cite[Proposition 2.13]{Z} the family $U_p[n] \to S[n]$ is equivariant under the action of the smooth groupoid $R_{tot}\rightrightarrows S[n]$. By the universality of pushouts, this implies that the family of expanded degenerations $\mathcal{X}[n] \to S[n]$ is also equivariant and so $\mathcal{X}[n]$ descends to the stack of expanded degenerations $\mathfrak{M} = \mathfrak{M}_{n} = \left[S[n]/R_{tot}\right]$. We denote this family of expanded degenerations by $\mathfrak{X}_\mathfrak{M} \to \mathfrak{M}$.

\begin{Prop}
    The family $\mathfrak{X}_\mathfrak{M} \to \mathfrak{M}$ has the following universal property: Given any family of expanded degenerations (bounded by the integer $n$) $\mathfrak X_T\to T$ (over a scheme $T$) of the family of curves $\mathcal X/S$, there exists a unique morphism $\rho: T\to \mathfrak M_n$ such that $\rho^*\mathcal X\cong \mathfrak X_T$. 
\end{Prop}

\begin{proof}
    The proof follows from \cite[Lemma 1.8. , Definition 1.9., Proposition 1.10.]{Li} and \cite[2.3 Standard families of expanded degenerations and pairs – gluing, and 2.5 Stacks of expanded degenerations and pairs]{Z}. The Lemma \cite[Lemma 1.8.]{Li} proves the universal property of the stack and the expanded family for the case when $\mathcal X/S$ is a "simple-degeneration", which means that the closed fiber of the family $\mathcal X/S$ is a union of two smooth curves that intersect transversely at a point. Then following the arguments of \cite[2.3 Standard families of expanded degenerations and pairs – gluing, and 2.5 Stacks of expanded degenerations and pairs]{Z} the universal property follows in our case, i.e., when the closed fibre is an irreducible nodal curve. Notice that even though Zijun explained the construction for the simple-degeneration case (for higher dimensional DM-stacks), his arguments work for our case as well, which is the case when the closed fiber of $\mathcal X/S$ is an irreducible nodal curve. We merely explained the construction here because for an irreducible nodal curve the construction involves a much simpler choice of etale covers.
\end{proof}

\smallskip

\subsection{\textbf{Log structures on derived Artin stacks}}
In this subsection, we discuss locally-free log structures on a quasi-smooth derived Artin stack. We refer to \cite{26,27} and \cite{Pr} for the prerequisite material on log-structure/ locally free log-structure on Artin/ derived Artin stacks. We now recall a few necessary definitions.

Let $\mathfrak L^0$ denote the algebraic stack that classifies fine log structures, and let $\mathfrak L^1$ denote the algebraic stack that classifies morphisms of fine log schemes  \cite{27}. We denote by $\mathfrak{L}^0_{\mathsf{lf}}$ and $\mathfrak L^1_\mathsf{lf}$ the classifying substacks  (known to be smooth by \cite[Proposition 1.7]{Pr}) of locally free log structures and morphisms between locally free log structures, respectively. Given a logarithmic scheme $(S, P, \alpha)$, the stack $\mathfrak L^0_{S,\mathsf{lf}}:=\mathfrak L^1_\mathsf{lf}\times_{\mathfrak L^0_{\mathsf{lf}}} S$ is the classifying stack of logarithmic morphisms from a scheme with locally free log structure to $(S, P,\alpha)$. With this notation, we now have the following natural definition \cite{Pr}:

\smallskip

\begin{Def} \label{def:logstrstacks}
A \emph{\bfseries locally free log structure on a derived algebraic stack $\mathfrak X$} is a morphism $\mathfrak X\longrightarrow \mathfrak L^0_{\mathsf{lf}}$ of derived 
stacks. A \emph{\bfseries morphism of derived log stacks $\mathfrak X\longrightarrow \mathfrak Y$ equipped with locally-free log structures} is a commutative diagram of derived stacks
\begin{equation}
\begin{tikzcd}
\mathfrak X\arrow{r}\arrow{d} & \mathfrak L^1_{\mathsf{lf}}\arrow{d}\\
\mathfrak Y\arrow{r} & \mathfrak L^0_{\mathsf{lf}}
\end{tikzcd} 
\end{equation}
\end{Def}

\smallskip

\subsubsection{\textbf{Relative logarithmic cotangent complex}}

Given a log morphism $f: \mathfrak X\longrightarrow \mathfrak Y$ between derived stacks equipped with locally free log structures, \emph{\bfseries the relative log-cotangent complex of the morphism $f$} is defined by 
\begin{equation}
{\mathbb L}^{\log}_{\mathfrak X/ \mathfrak Y}:=\mathbb L_{\mathfrak X/ \mathfrak{L}^0_{\mathfrak{Y},\mathsf{lf} }}
\end{equation}
Here, as before,  $\mathfrak{L}^0_{\mathfrak{Y},\mathsf{lf}} := \mathfrak{L}^{1}_{\mathsf{lf}}\times_{\mathfrak{L}^{0}_{\mathsf{lf}}} \mathfrak{Y}$ and  $\mathbb L_{\mathfrak X/ \mathfrak L^0_{\mathfrak{Y},\mathsf{lf}}}$ denote the relative cotangent complex for the morphism of stacks $\mathfrak X\longrightarrow  \mathfrak L^0_{\mathfrak{Y},\mathsf{lf}}$.

With this definition in mind, we return to the discussion on the stack of bounded expanded degenerations $\mathfrak M\rightarrow S$ (\ref{Exp1123}, \ref{Def.2.7}) for a given one parameter degeneration $\mathcal X\rightarrow S$ of curves.

\smallskip

\begin{Prop}\label{rellog101}
The morphism $\mathfrak M\longrightarrow S$ is a log-smooth map. Moreover, the relative log-cotangent complex $\mathbb L^{\log}_{\mathfrak M/S}=0$. 
\end{Prop}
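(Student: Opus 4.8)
The plan is to reduce the statement to a computation on the étale atlas $S[n]/G[n] \to \mathfrak{M}$ provided by Lemma \ref{et1}, and to use the explicit description of the log structure on $S[n]$ coming from the toric boundary. Since Lemma \ref{et1} gives an étale cover $[S[n]/G[n]] \to \mathfrak{M}$, and since the relative log-cotangent complex is étale-local and invariant under passing to quotients by (log-)smooth group actions, it suffices to prove the analogous statement for the morphism $[S[n]/G[n]] \to S$, i.e. that it is log-smooth with vanishing relative log-cotangent complex, where $S[n]$ carries the log structure associated to the normal crossing divisor $\{t_1 \cdots t_{n+1} = 0\}$ and $S = \mathbb{A}^1$ carries the log structure associated to $\{t = 0\}$.

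First I would make the log structures explicit. On $S = \mathbb{A}^1 = \operatorname{Spec} \mathbb{k}[t]$, the standard log structure is the one associated to the chart $\mathbb{N} \to \mathbb{k}[t]$, $1 \mapsto t$. On $S[n] = \mathbb{A}^{n+1} = \operatorname{Spec}\mathbb{k}[t_1,\dots,t_{n+1}]$ we take the log structure associated to the chart $\mathbb{N}^{n+1} \to \mathbb{k}[t_1,\dots,t_{n+1}]$, $e_i \mapsto t_i$. The map $S[n] \to S$ is induced by the monoid map $\mathbb{N} \to \mathbb{N}^{n+1}$, $1 \mapsto (1,1,\dots,1)$, which is an injective map of fine saturated monoids that is moreover \emph{integral} (in fact it is the diagonal, which is visibly integral), so by Kato's criterion the morphism of log schemes $S[n] \to S$ is log-smooth; indeed it is the base change along $\mathbb{A}^1 \to \mathbb{A}^1$, $(t_i) \mapsto \prod t_i$, of the log-smooth toric morphism, and one checks that the relative characteristic monoid is $\mathbb{N}^{n+1}/\mathbb{N} \cong \mathbb{Z}^n \oplus (\text{nothing})$, a free abelian group, so the morphism is in fact log-étale. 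Hence $\mathbb{L}^{log}_{S[n]/S} = 0$: the relative log-differentials $\Omega^{log}_{S[n]/S}$ are the cokernel of $\OO_{S[n]} \otimes \mathbb{Z} \to \OO_{S[n]} \otimes \mathbb{Z}^{n+1}$ coming from $\operatorname{dlog}$ of the monoids, which is $\OO_{S[n]}^{\oplus n}$ and... wait — this is not zero. Let me reconsider: the point is that we also quotient by $G[n] = \mathbb{G}_m^n$, and the $\operatorname{dlog}$ of the $G[n]$-action precisely kills these $n$ relative log-differentials. Concretely, the action \eqref{eq:S[n]action} is designed so that $\operatorname{Lie}(G[n]) \otimes \OO \to \Omega^{log}_{S[n]/S}$, $\xi \mapsto \operatorname{dlog}(\text{action of }\xi)$, is an isomorphism, and therefore in the quotient stack the relative log-cotangent complex of $[S[n]/G[n]] \to S$ is the two-term complex $[\operatorname{Lie}(G[n])^\vee \otimes \OO \to \Omega^{log}_{S[n]/S}]$ which is acyclic. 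This is the heart of the argument.

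So the key steps, in order, are: (1) recall the general fact that for a log-smooth morphism the relative log-cotangent complex is concentrated in degree $0$ and equals $\Omega^{log}$, and that log-smoothness and the log-cotangent complex are étale-local on the source; (2) using Lemma \ref{et1}, reduce to the atlas $[S[n]/G[n]] \to S$; (3) identify $[S[n]/G[n]]$ with the quotient of the log scheme $(S[n], \mathcal{M})$ by the log-$G[n]$-action, and write $\mathbb{L}^{log}_{[S[n]/G[n]]/S}$ as the cone of $\operatorname{Lie}(G[n])^\vee \otimes \OO_{S[n]} \to \mathbb{L}^{log}_{S[n]/S}$; (4) compute $\mathbb{L}^{log}_{S[n]/S} = \Omega^{log}_{S[n]/S} \cong \OO_{S[n]}^{\oplus n}$ from the monoid map $\mathbb{N} \hookrightarrow \mathbb{N}^{n+1}$, in particular re-deriving log-smoothness of $S[n] \to S$ from Kato's chart criterion; (5) check by an explicit computation with the coordinates $\sigma_i$ that the infinitesimal generators of the $G[n]$-action map under $\operatorname{dlog}$ to a basis of $\Omega^{log}_{S[n]/S}$, so the cone in step (3) vanishes; and (6) conclude, noting that étale descent along $[S[n]/G[n]] \to \mathfrak{M}$ also transports log-smoothness, giving $\mathbb{L}^{log}_{\mathfrak{M}/S} = 0$.

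The main obstacle I anticipate is step (3): being careful about what "log structure on the quotient stack" means and that $\mathbb{L}^{log}_{[S[n]/G[n]]/S}$ really is the cone of the Lie-algebra-coaction map — this requires knowing that the log structure on $[S[n]/G[n]]$ descends from the $G[n]$-equivariant log structure on $S[n]$ (true because the boundary divisor $\{t_1\cdots t_{n+1}=0\}$ is $G[n]$-invariant, as the action \eqref{eq:S[n]action} preserves the product $\prod t_i$), and then invoking the standard fiber sequence $\mathbb{L}^{log}_{S[n]/S} \to \mathbb{L}^{log}_{[S[n]/G[n]]/S}|_{S[n]} \to \mathbb{L}_{S[n]/[S[n]/G[n]]}[1]$ together with $\mathbb{L}_{S[n]/[S[n]/G[n]]} \simeq \operatorname{Lie}(G[n])^\vee \otimes \OO_{S[n]}[-1]$ (no log decoration, since $G[n]$ acts freely-up-to-stabilizers and the atlas map is smooth of relative dimension $n$). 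The verification in step (5) that the coaction map is an isomorphism onto $\Omega^{log}_{S[n]/S}$ is then a direct though slightly fiddly linear-algebra check using the specific exponents in \eqref{eq:S[n]action}, which is exactly why Jun Li chose that action; once this is in hand the vanishing is immediate and log-smoothness of $\mathfrak{M} \to S$ follows since a morphism with a perfect relative log-cotangent complex that is (étale-locally) the relative log-cotangent complex of a log-smooth morphism is itself log-smooth.
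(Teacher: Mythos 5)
Your proposal is correct and follows essentially the same route as the paper: reduce via the \'{e}tale cover $[S[n]/G[n]]\to\mathfrak M$ of Lemma \ref{et1}, identify the pulled-back relative log-cotangent complex of $[S[n]/G[n]]\to S$ with the two-term complex $\bigl[\Omega^{log}_{S[n]/S}\to \mathcal O_{S[n]}^{\oplus n}\bigr]$ given by the (co)moment map of the $G[n]$-action, and verify by the explicit form of the action \eqref{eq:S[n]action} that this map is an isomorphism. The only quibble is degree bookkeeping: $\mathbb L_{S[n]/[S[n]/G[n]]}$ is $\operatorname{Lie}(G[n])^{\vee}\otimes\mathcal O_{S[n]}$ concentrated in degree $0$ (not shifted by $[-1]$), so the resulting complex sits in degrees $[0,1]$ as in the paper --- but since the conclusion is acyclicity, this does not affect the argument.
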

\begin{proof}
From Lemma \ref{et1}, it follows that the map $[S[n]/ G[n]]\longrightarrow \mathfrak M$ is \'{e}tale. Notice that the varieties $S[n]$ and $S$ are log smooth varieties with the log structure coming from the divisor given by the pre-image of the closed point of $S$ under the map $S[n]\longrightarrow S$. Since the map $S[n]\longrightarrow S$ is an \'{e}tale base-change of the map $\mathbb A^{n+1}\longrightarrow \mathbb A^1$ given by $(t_1, \dots, t_{n+1})\mapsto t_1\cdots t_{n+1}$, the map is a log-smooth morphism. Therefore, it follows that the map $[S[n]/ G[n]]\longrightarrow S$ is also a log-smooth morphism of log-smooth Artin stacks. 

The explicit description of the relative log-cotangent complex is the following. 
The pull-back (to $S[n]$) of the log-cotangent complex of the stack $[S[n]/G[n]]$ is the perfect 
complex on $S[n]$ concentrated in degrees $0$ and $1$ which is given by 
\begin{equation}\label{moment}
\left[\Omega_{S[n]}^{1}(\log ~\partial S[n])\longrightarrow \mathcal O_{S[n]}^{\oplus n}\right]
\end{equation} 
where $\Omega_{S[n]}^{1}(\log ~\partial S[n])$ is the rank $(n+1)$ locally free sheaf of logarithmic one forms on $S[n]$ with poles along the strict normal crossings divisor $\partial S[n] = S[n]\times_{S} \{o\}$.  Since the action of $G[n]$ on $S[n]$ respects $\partial S[n]$, we obtain a natural $G[n]$-equivariant structure $\Omega_{S[n]}^{1}(\log ~\partial S[n])$  such that the natural log-symplectic structure is $G[n]$-equivariant. The morphism \eqref{moment} is given by the (equivariant)  moment map for the action of $G[n]$ on the log-cotangent bundle of $S[n]$. In particular \eqref{moment} is a $G[n]$-perfect complex. We recall that the action of $G[n]$ on $S[n]$ is given by 
\begin{equation}\label{act123}
(t_1, \dots, t_{n+1})\cdot(x_1, \dots, x_{n+1})=(t_1x_1, t_2 x_2, \dots, t_{n+1}x_{n+1})
\end{equation}
where $t_1\cdots t_{n+1}=1$, and the map $S[n]\longrightarrow S$ is given by

\begin{equation}\label{act}
(t_1, \dots, t_{n+1})\mapsto \prod^{n+1}_{i=1} t_i
\end{equation}

It is easy to see that the map \eqref{moment} 
\begin{equation}
\left[\Omega_{S[n]}^{1}(\log ~\partial S[n])\longrightarrow \mathcal O_{S[n]}^{\oplus n}\right]
\end{equation} 
of vector bundles is surjective: consider the action of $(\mathbb G_m)^n$ on $\mathbb A^{n+1}$ given by \eqref{act123}. Let $p:=(x_1, \cdots, x_{n+1})\in \mathbb A^{n+1}$ be any point. Then consider the orbit map at the point $p$. 

\begin{align}
    Or_p: (\mathbb G_m)^n\longrightarrow \mathbb A^{n+1}\\
    (t_1, \cdots, t_{n+1})\mapsto (t_1\cdot x_1, \cdots, t_{n+1}\cdot x_{n+1})
\end{align}

The induced map on the tangent spaces is $d(Or)_p: T_{(1,\cdots, 1)}(\mathbb G_m)^n=\mathbb k^n\longrightarrow T_p \mathbb A^{n+1}=\mathbb k^{n+1}$ is given by the multiplication by the following diagonal matrix

\smallskip

\[
\begin{bmatrix}
    x_1       & 0 & 0 & \dots & 0 \\
    0       & x_2 & 0 & \dots & 0 \\
    \hdotsfor{5} \\
    0       & 0 & 0 & \dots & x_{n+1}
\end{bmatrix}
\]

\smallskip

This implies that the image of $[\sum^{n+1}_{i=1} \lambda_i \partial_{t_i}]\in T_{(1,\cdots, 1)}(\mathbb G_m)^n=\mathbb k^n$ (notice that $\sum^{n+1}_{i=1} \lambda_i=0$) is $\sum^{n+1}_{i=1} \lambda_i\cdot x_i \cdot \partial_{x_i}=\sum^{n+1}_{i=1} \lambda_i\cdot \partial_{\log~x_i}$. Therefore, we get a map 

\begin{align}
    d(Or)_p: T_{(1,\cdots, 1)}(\mathbb G_m)^n=\mathbb k^n\longrightarrow (T \mathbb A^{n+1}(-\log~\partial A^{n+1}))_p=\mathbb k^{n+1}\\
    (\sum^{n+1}_{i=1} \lambda_i\cdot \partial_{t_i})\mapsto \sum^{n+1}_{i=1} \lambda_i\cdot \partial_{\log~x_i}
\end{align}

The map above is clearly injective. Therefore, the dual of this map 

\begin{align}
    (d(Or)_p)^{\vee}: \Omega^1_p \mathbb A^{n+1}(\log~\partial A^{n+1})\longrightarrow \Omega^1_{(1,\cdots, 1)}(\mathbb G_m)^n
\end{align}

is surjective. This is the explicit description of the moment map \ref{moment}. It is easy to see that the composite map
\begin{equation}
\Omega_{S}^{1}(\log o)\hookrightarrow \Omega_{S[n]}^{1}(\log ~~\partial S[n])\longrightarrow \mathcal O_{S[n]}^{\oplus n}
\end{equation}
is $0$, and that the morphism $\Omega_{S[n]/S}^{1}(\log ~~\partial S[n])\longrightarrow \mathcal O_{S[n]}^{\oplus n}$ is an isomorphism.\\

Notice that the perfect complex
\begin{equation}
\big[\Omega_{S[n]/S}^{1}(\log ~~\partial S[n])\longrightarrow \mathcal O_{S[n]}^{\oplus n}\big]
\end{equation}
is precisely the relative log cotangent complex of the morphism $\big[ S[n]/G[n]\big]\longrightarrow S$ pulled back to $S[n]$. But since the morphism $\Omega_{S[n]/S}^{1}(\log ~~\partial S[n])\longrightarrow \mathcal O_{S[n]}^{\oplus n}$ is an isomorphism, the relative log-cotangent complex of the morphism $\big[ S[n]/G[n]\big]\longrightarrow S$ is equivalent to $0$.
\end{proof}

\smallskip

\subsection{\textbf{Relative shifted log-symplectic forms}}
In this subsection, we will recall the definition of relative shifted symplectic forms for a locally of finite presentation morphism of derived Artin stacks. We will then define relative shifted log-symplectic forms for certain logarithmic morphisms of derived Artin stacks equipped with locally-free log structures. 

Let $\pi: M\longrightarrow S$ be a morphism of derived Artin stacks with $S$ a scheme. Suppose we also assume that the stack $M$ and the scheme $S$ are equipped with locally free log structures such that the map $\pi$ is a morphism of log stacks. These data are equivalent to a map of derived  stacks $\pi_{log}: M\longrightarrow \mathfrak L^0_{S,\mathsf{lf}}$, where $\mathfrak L^0_{S,\mathsf{lf}}$ is the classifying log stack for locally free log schemes  mapping into $S$. We further assume that the map $\pi_{log}$ is of locally of finite presentation. 

\smallskip

\noindent
It is well-known that a map $R \longrightarrow S$ of commutative $\mathbb{k}$-algebras is finitely presented if and only if $\Hom_{R/calg}(S,-)$ commutes with filtered colimits. 
Above we used the homotopical notion of being locally finitely presented, defined as follows:

\smallskip

\begin{Def} \cite[Definition 2.16]{PV} \ We say that a map $A\longrightarrow B$ in $\cdga^{\leq 0}$ is (homotopically) \emph{\bfseries finitely presented} if $\map_{A/\cdga^{\leq 0}}(B, -) $ commutes with (homotopy) filtered colimits.
\end{Def}

\begin{Rem} \label{Rem0910}
A map $A \longrightarrow B$ in $\cdga^{\leq 0}$ is finitely presented (\cite[Definition 2.16]{PV} and \cite[Theorem 7.4.3.18]{20}) if and only if
\begin{enumerate}
\item $H^0(A)\longrightarrow H^0(B)$ is classically finitely presented, and
\item $\mathbb L_{B/A}$ is a perfect $B$-dg module (i.e., a dualizable object in $(dgmod(B), \otimes)$). 
\end{enumerate}
\end{Rem}

\smallskip

\noindent
We will also need an appropriate notion of quasi-smoothness, which we recall next:

\smallskip

\begin{Def} 
\begin{enumerate}[label=\upshape(\alph*),ref=\thesubsubsection (\alph*)]
\item\label{qs} A map of derived Artin stacks $f: \mathcal X\longrightarrow \mathcal Y$ is \emph{\bfseries quasi-smooth}  if it is locally of finite presentation and the relative cotangent complex $\mathbb L_f$ concentrated in degrees $\geq -1$.
\item\label{LQS}
Let $\pi: M\rightarrow S$ be a logarithmic morphism from a derived Artin stack $M$ to a scheme $S$ both equipped with locally free log structures. We say that the map $\pi$ is \emph{\bfseries log quasi-smooth} if the corresponding classifying  morphism $\pi_{log} : M \to \mathfrak{L}^{0}_{S,\mathsf{lf}}$ is quasi-smooth.
\end{enumerate}
\end{Def}

\smallskip

\begin{Def}{\cite[Definition 2.12]{Pr}}
The \emph{\bfseries relative log cotangent complex} of the log morphism of derived stacks $\pi : M \to S$ is the complex ${\mathbb {L}}^{\log}_{M/S}:=\mathbb L_{\pi_{\log}}$. Here $\mathbb L_{\pi_{\log}}$ denotes the relative cotangent complex for the classifying  morphism $\pi_{\log}$.
\end{Def}

\smallskip

\begin{Rem} When $\pi : M \to S$ is a log quasi-smooth morphism from a derived Artin stack to a scheme $X$, then the quasi-smoothness  of the  map $\pi_{\log}: M\longrightarrow \mathfrak L^0_{S, \mathsf{lf}}$ is equivalent to the fact that ${\mathbb L}^{\log}_{M/S}$ is a perfect complex over $M$ of amplitude $[-1, 1]$.
\end{Rem}

Now we recall the definitions of relative shifted symplectic forms for a finitely presented map of derived Artin stacks $M\longrightarrow N$ from \cite{PTVV}, \cite[Definition 1.4.1]{CPTVV}. The finitely presented condition implies that the relative cotangent and relative tangent complexes are perfect (Remark \ref{Rem0910}). 

\smallskip

Let $A \in \cdga^{\leq 0}$ and let $M:=\Spec A\longrightarrow N$ be a finitely presented morphism to a derived Artin stack $N$. The  \emph{\bfseries relative de Rham algebra} for $M \to N$ is the cdga $DR(M/N) \in \cdga^{\leq 0}_{A}$ given by 
\begin{equation}
DR(M/N):=\Sym_A(\mathbb L_{M/N}[1])) = \oplus^{\infty}_{p=0} \wedge^p \mathbb L_{M/N}[p],
\end{equation}
where $\mathbb L_{M/N}$ is the relative cotangent complex, and $DR(M/N)$ is equipped with the  (cohomological) differential $d$ induced from the differential in $\mathbb L_{M/N}$. 

\smallskip

\noindent
We also have the de Rham differential $d_{DR}: \wedge^p \mathbb L_{M/N}\longrightarrow \wedge^{p+1} \mathbb L_{M/N}$. The de Rham differential defines a mixed structure on $DR(M/N)$ with $\epsilon:=d_{DR}$. This makes $DR(M/N)$ into a mixed cdga (we include the definition below). In addition to the internal cohomological grading $DR(M/N)$ has a second (form degree) ``weight grading" defined as $DR(M/N)(p):= \wedge^p \mathbb \mathbb L_{M/N}[p]$. The weight grading and mixed structure on $DR(M/N)$ are also compatible with the multiplicative structure and make $DR(M/N)$ into a graded mixed cdga over $\mathbb k$. The degree and weight of the internal differential $d$ are $1$ and $0$, respectively. The degree and weight of $\epsilon$ are $-1$ and $1$, respectively. For the definitions of mixed complexes, graded mixed complexes and mixed cdga's we refer the reader to \cite[Section 1.1]{PTVV}. 

\smallskip

Let $dAff_{/N}$ be the comma category of derived affine schemes mapping to $N$.
The assignment $(\Spec A\longrightarrow N)\mapsto DR(\Spec A/N)$ defines a functor 
\begin{equation*}
dAff_{/N}^{\text{op}} \longrightarrow \epsilon-dg^{gr}
\end{equation*}
from $dAff_{/N}^{\text{op}}$ to the category of mixed graded cdga over $\mathbb{k}$. 
We can derive this functor on the left, by precomposing it with cofibrant replacement functor, to obtain 
a well-defined $\infty$-functor 
\[
\DR(-/N): \dAff^{\text{op}}_{/N}\longrightarrow \medg.
\]
Recall that for a mixed graded complex $F \in \epsilon-dg^{gr}$ of $\mathbb{k}$-vector spaces we have assoicated  complexes $NC^n(F)(p) := \underset{i\geq 0}\prod F^{n-2i}(p+i) \in dg$ and a graded complex 
$NC^w(F):= \underset{p\in \mathbb Z}\oplus NC(F)(p) \in  dg^{gr}$ of negative cyclic chains. 
Recall also that for any complex $E \in dg$ of $\mathbb{k}$-vector spaces we get a simplicial set $|E| = \text{Map}_{dg}(\mathbb{k},E) =$ the Dold-Kan functor applied to the $\tau_{\leq 0}$ truncation of $E$.

\smallskip

\begin{Def} 
For $A \in \cdga^{\leq 0}$ with a map $\Spec A\longrightarrow N$ of derived stacks and two integers $p \geq 0$ and $n \in \mathbb Z$, we define
\begin{enumerate}
\item The simplicial set $\mathcal A^p_N (\Spec (A), n )$ of relative \emph{\bfseries $n$-shifted $p$-forms on $\Spec A/N$} by setting $\mathcal A^p_N (\Spec (A), n ) := \left| \wedge^p  \mathbb L_{\Spec (A)/N}[n]\right|$.

\item The simplicial set $\mathcal A^{p,cl}_N(\Spec (A),n)$ of relative  \emph{\bfseries closed $n$-shifted $p$-forms on $\Spec A/N$} by setting  $\mathcal A^{p,cl}_N(\Spec (A),n) := | NC^w(DR(\Spec (A)/N))[n - p](p)|$.
\end{enumerate}
\end{Def}

\smallskip

\noindent

Again by precomposing with cofibrant replacements, the constructions $\mathcal A^p_N ( -, n )$ and $\mathcal A^{p,cl}_N(-,n)$ give rise to $\infty$-functors \linebreak $\dAff^{op}_{/N}\longrightarrow \mathbb{S} $ from the $\infty$-category  $\dAff^{op}_{/N}
$ of affine derived schemes over $N$  to the $\infty$-category $\mathbb{S}$ of simplicial sets. We denote these $\infty$-functors by $\boldsymbol{\mathcal A^p_N ( -, n )}$ and $\boldsymbol{\mathcal A^{p,cl}_N(-,n)}$, respectively. 

\smallskip

\begin{Def}
For $\boldsymbol{\Spec} (A) \in \dAff_{/N}$, the simplicial set $\mathcal A^p_N(\boldsymbol{\Spec} (A),n)$ (respectively, $\mathcal A^{p,cl}_{N}(\boldsymbol{\Spec} (A),n)$) is called the space of $p$-forms of degree $n$ on the derived stack $\boldsymbol{\Spec} (A)$, relative to $N$ (respectively, the space of closed $p$-forms of degree $n$ on the derived stack $\boldsymbol{\Spec} (A)$, relative to $N$).
\end{Def}

\smallskip

\noindent
The two $\infty$-functors $\boldsymbol{\mathcal A^p_{N}(-,n)}$ and $\boldsymbol{\mathcal A^{p,cl}_N(-,n)}$ are derived prestacks on $\dAff_{/N}$. By \cite[Proposition 1.11]{PTVV}, we know that $\boldsymbol{\mathcal A^p_N(-,n)}$ and $\boldsymbol{\mathcal A^{p,cl}_{N}(-,n)}$ are derived stacks for the big \'{e}tale site over $N$. 

\smallskip

Now let $S$ be an l.f.p. derived Artin stack which is equipped with a locally free log strurcture. 
Let $\mathbf{dSt}_{S,\log}$ be the $\infty$ category of derived Artin stacks $M$ equipped with a locally free log structure and a logarithmic map   $M \to S$. In order to define relative shifted log $p$-forms for an object $M\longrightarrow S$ in $\mathbf{dSt}_{S,\log}$, we set  $N:=\mathfrak L^0_{S, \mathsf{lf}}$ in the above construction. In other words we define stacks $\boldsymbol{\mathcal A}^{p}_{\mathfrak L^0_{S, \mathsf{lf}}}(-, n)$, respectively $\boldsymbol{\mathcal A}^{p,cl}_{\mathfrak L^0_{S, \mathsf{lf}}}(-, n)$ of relative log $p-$forms, respectively closed log $p-$ forms,  of degree $n$ on $M \to S$ by setting $\boldsymbol{\mathcal A}^{p}_{S, \log}(-, n):= \boldsymbol{\mathcal A}^{p}_{\mathfrak L^0_{S, \mathsf{lf}}}(-, n)$, respectively $\boldsymbol{\mathcal A}^{p,cl}_{S, \log}(-, n):= \boldsymbol{\mathcal A}^{p,cl}_{\mathfrak L^0_{S, \mathsf{lf}}}(-, n)$.

\

\begin{Def}\label{slogsym}
\begin{enumerate}
\item[(1)] The \emph{\bfseries space of $S$ relative logarithmic $n$-shifted \linebreak $p$-forms on $M \in \dSt_{S, \log}$} is defined to be 
\[
\mathcal{A}_{log}^p (M/S, n):=\map_{_{\textbf{dSt}_{S, log}}}\left(M, \boldsymbol{\mathcal A}^p_{\mathfrak L^0_{S, \mathsf{lf}}}(-, n)\right).
\]
\item[(2)] The \emph{\bfseries space of closed relative logarithmic $n$-shifted $p$-forms on \linebreak $M \in \dSt_{S, \log}$} is defined to be  
\[
\mathcal{A}_{\log}^{p,cl} (M/S, n):=\map_{_{\textbf{dSt}_{S, log}}}\left(M, \boldsymbol{\mathcal A}^{p,cl}_{\mathfrak L^0_{S, \mathsf{lf}}}(-, n)\right).
\]
\item[(3)] A $2$-form $\omega\in \mathcal{A}_{\log}^2(M/S,n)$ is \emph{\bfseries non-degenerate} if the induced map in $\mathrm{D}_{qcoh}(M)$
\begin{equation}
\Theta_{\omega}: \mathbb T^{\log}_{M/S}\longrightarrow \mathbb L^{\log}_{M/S}[n]
\end{equation}  
is a quasi-isomorphism. We denote by $\mathcal{A}_{\log}^2 (M/S, n)^{nd}$ the union of all the connected components of  $\mathcal{A}_{\log}^2 (M/S, n)$ which is consist of non-degenerate relative logarithmic $2$-forms of degree $n$ on $M$. Here, $\mathbb T^{\log}_{M/S}$ denotes the dual of $\mathbb L^{\log}_{M/S}$, and we call it the relative logarithmic tangent complex. 

\item[(4)] Finally, we define the \emph{\bfseries space of $n$-shifted relative log symplectic forms} as the homotopy fibre product 
\[
Symp_{\log}(M/S, n):=
\mathcal{A}_{\log}^2 (M/S, n)^{nd}\times^{h}_{\mathcal{A}_{\log}^2 (M/S, n)} \mathcal{A}_{\log}^{2,cl} (M/S, n).
\]
\end{enumerate}
\end{Def}

\

\section{\textbf{The logarithmic Dolbeault moduli stack and its log symplectic structure}}
In this section, we will construct a logarithmic version of the relative Dolbeault moduli stack for the family of curves $\mathfrak X_{\mathfrak M}\rightarrow \mathfrak M$ defined at the end of section~\ref{Fam}. We will show that the relative logarithmic Dolbeault moduli stack has a relative $0$-shifted log-symplectic form over the spectrum of a discrete valuation ring $S$. Moreover, we will show that the relative log-symplectic form is an extension of Hitchin's symplectic form on the generic fibre of the moduli stack over the spectrum of a discrete valuation ring. This was proved for moduli schemes in \cite{D}.

\smallskip

\begin{Def}
Let $\mathcal X\longrightarrow \mathcal{S}$ be a flat representable morphism of classical Artin stacks of relative dimension $1$. We call it a \emph{\bfseries semi-stable family of curves} if every geometric fibre of $\mathcal X\longrightarrow \mathcal{S}$ is isomorphic to a semi-stable curve i.e., it satisfies the following properties
\begin{enumerate}
\item every geometric fibre is a reduced, connected, projective, nodal curve
\item every component $E$ of a geometric fiber which isomorphic to $\mathbb P^1$ must intersect 
the union of all other components at at least two smooth points.   
\end{enumerate}
\end{Def}

\smallskip

\noindent
Recall that if $C$ is a nodal curve with a set of nodes $D$ we have an explicit formula for the dualizing sheaf of $C$. Indeed, let  $q: \widetilde{C}\longrightarrow C$ denote the normalisation and let $\widetilde D$ denote the pre-image $q^{-1}(D)$. Then the \emph{\bfseries dualizing sheaf $\omega_C$} of $C$ is the kernel  
\begin{equation}
\omega_{C} = \ker \left[q_*\Omega_{\widetilde C}^{1}(\widetilde D)\longrightarrow \underset{x\in D}\bigoplus \ \mathbb k_x\right],
\end{equation}
where 
\begin{itemize}
\item $\mathbb k_x$ denotes the sky-scraper sheaf at the point $x$.
\item for the two preimages $q^{-1}(x) = \{x^{-},x^{+}\}$ of a node $x \in D$, the map \linebreak $q_*\Omega_{\widetilde C}^{1}(x^++x^-)\longrightarrow \mathbb k_x$ is given by 
\begin{equation}\label{res908}
s \mapsto \mathsf{Res}(s; x^+)+\mathsf{Res}(s;x^-), 
\end{equation}
where $\mathsf{Res}(s; x)$ denotes the residue of a form $s$ at a point $x$.
\end{itemize}
It is straightforward to check that it is a locally free sheaf of rank one.
It is well known  that this property persists in families. That is, for a family $f : \mathcal{X} \to \mathcal{S}$ of semi-stable/ pre-stable curves there is a well defined relative dualizing sheaf $\omega_{\mathcal{X}/\mathcal{S}}$ which is a line bundle on the total space of the family \cite[109.19 The relative dualizing sheaf]{34}. In the case when $\mathcal{S}$ and $\mathcal{X}$ are both smooth, the relative dualizing sheaf is naturally identified with the sheaf of relative logarithmic  forms along the fibers of $f$. Concretely, if 
$\boldsymbol{\mathfrak{d}}_{\mathcal{S}} \subset  \mathcal{S}$ denotes the discriminant divisor of the map 
$f : \mathcal{X} \to \mathcal{S}$, and $\boldsymbol{\mathfrak{d}}_{\mathcal{X}} = f^{-1}\left(\boldsymbol{\mathfrak{d}}_{\mathcal{S}}\right)$ is the normal crossings divisor in $\mathcal{X}$ comprised of the singular fibers of $f$, then we have
\[
\omega_{\mathcal{X}/\mathcal{S}} = \Omega^{1}_{\mathcal{X}}(\log \boldsymbol{\mathfrak{d}}_{\mathcal{X}})/f^{*} \Omega^{1}_{\mathcal{X}}(\log \boldsymbol{\mathfrak{d}}_{\mathcal{S}}).
\]
Alternatively, again under the assumption that   $\mathcal{X}$ and $\mathcal{S}$ are smooth,  we can identify  the relative dualizing sheaf $\omega_{\mathcal{X}/\mathcal{S}}$ with  the  relative  cotangent sheaf $\Omega^{1}_{l\mathcal{X}/l\mathcal{S}}$ of the log smooth map $l\mathcal{X} \to l\mathcal{S}$, where $l\mathcal{X}$ and $l\mathcal{S}$ are the log stacks whose underlying stacks are  $\mathcal{X}$ and $\mathcal{S}$ and whose log structures are given by the normal crossings divisors $\boldsymbol{\mathfrak{d}}_{\mathcal{X}}$ and $\boldsymbol{\mathfrak{d}}_{\mathcal{S}}$ respectively. More generally, for any semi-stable family of curves  $\mathcal{X} \to \mathcal{S}$ for which $\mathcal{S}$ is a classical Artin stack which is locally of finite type, the construction of F.~Kato \cite{17} implies that there are canonical functorial log structures $l\mathcal{X}$ and $l\mathcal{S}$ for which the map $\mathcal{X} \to \mathcal{S}$ can be refined to a log smooth morphism $l\mathcal{X} \to l\mathcal{S}$ so that still have $\omega_{\mathcal{X}/\mathcal{S}} \cong \Omega^{1}_{l\mathcal{X}/l\mathcal{S}}$.

\smallskip

\smallskip

\begin{Def} \label{def-relative}
A \emph{\bfseries relative Higgs bundle} over a family of semistable curves $\mathcal X/\mathcal S$ is a locally free $\mathcal O_{\mathcal X}-$module $\mathcal E$ with a Higgs field \linebreak $\phi: \mathcal E\longrightarrow \mathcal E\otimes \omega_{\mathcal X/\mathcal S}$.
\end{Def}

\smallskip

\begin{Rem} The vector bundle $\mathcal E$ along with the Higgs field $\phi : \mathcal{E} \to \mathcal{E}\otimes \omega_{\mathcal{X}/\mathcal{S}}$ might also be dubbed a \emph{\bfseries relative logarithmic Higgs bundle} on $\mathcal{X}/\mathcal{S}$ since, as explained above,  such a $\phi$ take values in $\Omega^{1}_{l\mathcal{X}/l\mathcal{S}}$.  In the previous definition, we chose to call such a pair $(\mathcal{E},\phi)$  simply a relative Higgs bundle since it is just a Higgs bundle on $\mathcal{X}$ with coefficients in the relative dualizing sheaf $\omega_{\mathcal{X}/\mathcal{S}}$ of the map $\mathcal{X} \to \mathcal{S}$.
\end{Rem}

\smallskip

\noindent
Next we consider $\mathsf{T}_{\mathcal X/\mathcal S}:=\underline{Spec}_{\mathcal X} Sym_{\mathcal O_{\mathcal X}} (\omega_{\mathcal X/\mathcal S})$. This is the total space of the line bundle dual to the relative dualizing sheaf of the family of curves $\mathcal X\longrightarrow \mathcal S$.

\smallskip

\begin{Def}
Let $\widehat{\mathsf{T}_{\mathcal X/\mathcal S}}$ denote the formal completion of $\mathsf{T}_{\mathcal X/\mathcal S}$ along the zero section. Notice that it is a formal group scheme over $\mathcal X$, whereas $\mathsf{T}_{\mathcal X/\mathcal S}$ is an abelian group scheme over $\mathcal X$. 
\begin{itemize} 
\item The \emph{\bfseries (logarithmic) Dolbeault stack} of $\mathcal X/\mathcal S$ is defined as the formal quotient stack
\begin{equation}
\mathcal X_{Dol/\mathcal S}:= \left[\widehat{\mathsf{T}_{\mathcal X/\mathcal S}}\times_S \mathcal X \rightrightarrows \mathcal X\right]
\end{equation}
\item 
The \emph{\bfseries nilpotent (logarithmic) Dolbeault stack} of $\mathcal X/\mathcal S$ is defined as the quotient stack 
\begin{equation}
\mathcal X^{nil}_{Dol/\mathcal S}:=[\mathsf{T}_{\mathcal X/\mathcal S}\times_{\mathcal S} \mathcal X\rightrightarrows \mathcal X]
\end{equation}
\end{itemize}
\end{Def}

\smallskip

\smallskip

\noindent
With this notation, we have the following standard lemma

\smallskip

\begin{Lem}\label{Corr1}
\begin{enumerate}
\item $\QCoh(\mathcal X_{Dol/\mathcal{S}})\cong \Mod_{\Sym_{\mathcal O_{\mathcal X}} (\omega^{\vee}_{\mathcal X/\mathcal S})} (\QCoh(\mathcal{X})) $
\item If $(\mathcal{E},\phi)$ is a quasi-coherent relative Higgs complex on $\mathcal{X}/\mathcal{S}$ and if $E$ is the corresponding quasi-coherent complex on $\mathcal{X}_{Dol/\mathcal{S}}$ then we have an isomorphism of quasi-coherent complexes on $\mathcal{S}$: 
\[
\xymatrix@R-1pc@C-1pc{
H^{\bullet}(\mathcal X_{Dol/\mathcal{S}}/\mathcal{S}, E) \ar@{=}[d]_-{\text{def}} & \cong & H^{\bullet}_{Dol}(\mathcal X/\mathcal{S}, (\mathcal E,\phi)) \ar@{=}[d]_-{\text{def}} \\
R(\mathcal X_{{Dol}/\mathcal{S}} \to \mathcal{S})_{*} E 
& & R(\mathcal{X} \to \mathcal{S})_{*} \left[  \mathcal{E} \stackrel{\phi}{\to} E\otimes \omega_{\mathcal{X}/\mathcal{S}} \right]
}
\]
\end{enumerate}
\end{Lem}
\begin{proof}
The proof follows verbatim \cite[Proposition 5.1.2]{PS} after replacing the relative cotangent complex $\mathbb L_{\mathcal X/\mathcal S}$ with the relative dualizing sheaf $\omega_{\mathcal X/\mathcal S}$.
\end{proof}

\smallskip

\begin{Rem}\label{OCOR}
    Notice that $\mathcal X_{Dol/\mathcal S}$ is the relative classifying stack $B\widehat{\mathsf{T}}_{\mathcal X/\mathcal S}$ of the formal  commutative group scheme $\widehat{\mathsf{T}}_{\mathcal X/\mathcal S}$ over $\mathcal X$. The nilpotent Dolbeault stack $\mathcal X^{nil}_{Dol/\mathcal S}$ is the classifying stack of the abelian group scheme $\mathsf{T}_{\mathcal X/\mathcal S}$ over $\mathcal X$. Therefore we have the following equivalence of categories.

    \begin{equation}\label{equi1112}
\left\{
\begin{array}{@{}ll@{}} 
\text{Quasi-coherent sheaves}\\
\text{over}~ B\widehat{\mathsf{T}}_{\mathcal X/\mathcal S}
\end{array}\right\}
\cong \left\{
\begin{array}{@{}ll@{}} 
\text{Quasi-coherent sheaves over}~\mathcal X~\text{with}\\
\text{an action of the sheaf of algebras} \\
\Sym (\omega^{\vee}_{\mathcal X/\mathcal S})
\end{array}\right\}
\end{equation}

\smallskip

Notice that $\Sym (\omega^{\vee}_{\mathcal X/\mathcal S})$ is a quadratic algebra. The quadratic dual algebra $(\Sym ~\omega^{\vee}_{\mathcal X/\mathcal S})^!$ is isomorphic to the dg-algebra $\Sym(\omega_{\mathcal X/\mathcal S}[-1])$ with zero differential. Moreover, we have the following equivalence of categories (\cite{PP}).

  \begin{equation}\label{equi1111}
\left\{
\begin{array}{@{}ll@{}} 
\text{Quasi-coherent sheaves over}~\mathcal X~\text{with}\\
\text{an action of the sheaf of algebras} \\
\Sym (\omega^{\vee}_{\mathcal X/\mathcal S})
\end{array}\right\}
\cong
\left\{
\begin{array}{@{}ll@{}} 
\text{Quasi-coherent sheaves over}\\
\Spec_{\mathcal{X}} ~ \Sym~(\omega_{\mathcal X/\mathcal S}[-1])
\end{array}\right\}
\end{equation}

From \eqref{equi1112} and \eqref{equi1111}, we have the following.
\begin{equation}\label{equi1}
B\widehat{\mathsf{T}}_{\mathcal X/\mathcal S} \cong \Spec_{\mathcal{X}}~\Sym(\omega_{\mathcal X/\mathcal S}[-1]). 
\end{equation}
Similarly, we have the following equivalence. 
\begin{equation}
\left\{
\begin{array}{@{}ll@{}} 
\text{Vector bundles}\\
\text{over}~ B\widehat{\mathsf{T}}_{\mathcal X/\mathcal S}
\end{array}\right\}
\cong \left\{
\begin{array}{@{}ll@{}} 
\text{Vector bundles over}~ \mathcal X ~~\text{with an}\\
\text{ action of the sheaf of algebras} \\
({\Sym (\omega_{\mathcal X/\mathcal S}}))^{\vee}\cong \widehat{\Sym}(\omega^{\vee}_{\mathcal X/\mathcal S})
\end{array}\right\}
\cong \left\{
\begin{array}{@{}ll@{}} 
\text{nilpotent Higgs}\\
\text{bundles over}~\mathcal X
\end{array}\right\}
\end{equation}
\end{Rem}

\smallskip

\begin{Thm}\label{main}
$\mathcal X_{Dol/\mathcal{S}}$ is $\mathcal O$-compact and $\mathcal O$-oriented over $\mathcal S$. Hence, $\map_{\mathcal S}(\mathcal X_{Dol/\mathcal{S}}, BGL_{n}(\mathcal{O}_{\mathcal{S}}))$ has a $0$-shifted relative symplectic structure over $\mathcal S$.
\end{Thm}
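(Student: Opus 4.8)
The plan is to verify that $\mathcal X_{lDol}$ satisfies the hypotheses of the mapping-stack theorem of \cite{PTVV} in its relative form over $\mathcal S$ — $\mathcal O$-compactness together with an $\mathcal O$-orientation of dimension $2$ — and then to feed it the canonical $2$-shifted symplectic structure on $BGL_n$. The starting point is the identification \eqref{equi1}, $\mathcal X_{lDol}\cong \Spec_{\mathcal X}\Sym_{\mathcal O_{\mathcal X}}(\omega_{\mathcal X/\mathcal S}[-1])$, so that the structure morphism factors as $\mathcal X_{lDol}\xrightarrow{\,q\,}\mathcal X\xrightarrow{\,r\,}\mathcal S$ with $q$ affine. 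Since the geometric fibres of $r$ are Gorenstein curves, $\omega_{\mathcal X/\mathcal S}$ is an invertible $\mathcal O_{\mathcal X}$-module; as $\omega_{\mathcal X/\mathcal S}[-1]$ lies in odd cohomological degree, the symmetric algebra truncates,
\[
\Sym_{\mathcal O_{\mathcal X}}(\omega_{\mathcal X/\mathcal S}[-1])\ \simeq\ \mathcal O_{\mathcal X}\oplus\omega_{\mathcal X/\mathcal S}[-1]
\]
with zero differential, so $q_*\mathcal O_{\mathcal X_{lDol}}$ is a perfect $\mathcal O_{\mathcal X}$-module of rank $2$. This is the structural fact that drives everything.

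For $\mathcal O$-compactness I would argue that for any $E\in\mathrm{Perf}(\mathcal X_{lDol})$ the pushforward $q_*E$ is perfect on $\mathcal X$ (by the previous line $q$ is affine with perfect structure sheaf), and $r$ is proper and flat of relative dimension $1$, so $R\Gamma(\mathcal X_{lDol},E)\simeq R\Gamma(\mathcal X,q_*E)$ is perfect over $\mathcal O_{\mathcal S}$ and its formation commutes with base change on $\mathcal S$ (cohomology and base change for $r$). To produce the orientation I would compose the map coming from the projection $\Sym_{\mathcal O_{\mathcal X}}(\omega_{\mathcal X/\mathcal S}[-1])\twoheadrightarrow\omega_{\mathcal X/\mathcal S}[-1]$, namely $R\Gamma(\mathcal X_{lDol},\mathcal O_{\mathcal X_{lDol}})\to R\Gamma(\mathcal X,\omega_{\mathcal X/\mathcal S})[-1]$, with the $[-1]$-shift of the Grothendieck trace $R^{1}r_*\omega_{\mathcal X/\mathcal S}\xrightarrow{\ \sim\ }\mathcal O_{\mathcal S}$ — an isomorphism compatible with base change because $r$ is proper, flat, Gorenstein of relative dimension $1$ — getting
\[
\eta_{\mathcal X_{lDol}}\colon R\Gamma(\mathcal X_{lDol},\mathcal O_{\mathcal X_{lDol}})\longrightarrow \mathcal O_{\mathcal S}[-2].
\]

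The remaining point is non-degeneracy: for every $E\in\mathrm{Perf}(\mathcal X_{lDol})$ the induced pairing $R\Gamma(\mathcal X_{lDol},E)\otimes_{\mathcal O_{\mathcal S}}R\Gamma(\mathcal X_{lDol},E^{\vee})\to\mathcal O_{\mathcal S}[-2]$ should be perfect, i.e. $R\Gamma(\mathcal X_{lDol},E)^{\vee}\simeq R\Gamma(\mathcal X_{lDol},E^{\vee})[2]$. By Lemma \ref{Corr1} such an $E$ is a perfect complex $E_0$ on $\mathcal X$ with an action of $\Sym\omega^{\vee}_{\mathcal X/\mathcal S}$, i.e. a map $\phi\colon E_0\to E_0\otimes\omega_{\mathcal X/\mathcal S}$, and $R\Gamma(\mathcal X_{lDol},E)\simeq R\Gamma(\mathcal X,K_\phi)$ with $K_\phi:=[\,E_0\xrightarrow{\phi}E_0\otimes\omega_{\mathcal X/\mathcal S}\,]$ in degrees $0,1$, while $E^{\vee}$ corresponds to $K_{-\phi^{\vee}}$. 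I would then invoke relative Grothendieck–Serre duality for $r$, $R\Gamma(\mathcal X,G)^{\vee}\simeq R\Gamma(\mathcal X,R\,\mathcal{H}om_{\mathcal O_{\mathcal X}}(G,\omega_{\mathcal X/\mathcal S})[1])$, with $G=K_\phi$, and combine it with the identification $R\,\mathcal{H}om_{\mathcal O_{\mathcal X}}(K_\phi,\omega_{\mathcal X/\mathcal S})[1]\simeq K_{-\phi^{\vee}}[2]$ (a short computation with the two-term complex using invertibility of $\omega_{\mathcal X/\mathcal S}$) to get the desired equivalence, checking it is the one induced by $\eta_{\mathcal X_{lDol}}$ and is base-change compatible. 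This gives the $\mathcal O$-orientation of dimension $2$ over $\mathcal S$. To conclude, $BGL_n$ carries the canonical $2$-shifted symplectic structure determined by the non-degenerate invariant pairing on $\mathfrak{gl}_n$ \cite{PTVV}, so $BGL_n\times\mathcal S\to\mathcal S$ is relatively $2$-shifted symplectic, and the relative version of the mapping-stack theorem of \cite{PTVV} produces a $(2-2)=0$-shifted relative symplectic structure on $\map_{\mathcal S}(\mathcal X_{lDol},BGL_n\times\mathcal S)$ over $\mathcal S$.

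The hard part will be the non-degeneracy step. It rests on relative Grothendieck–Serre duality for the (possibly nodal) family $\mathcal X/\mathcal S$ with its relative dualizing sheaf, on the bookkeeping that identifies the dual of a perfect complex on $\mathcal X_{lDol}$ with the dual Dolbeault two-term complex together with the shift by $2$, and — for the statement to hold \emph{relatively} over $\mathcal S$ — on propagating base-change compatibility of the trace and of duality through the whole argument. A secondary subtlety, to be handled with care, is that not every perfect complex on $\mathcal X_{lDol}$ is pulled back from $\mathcal X$, so the reduction via Lemma \ref{Corr1} to the two-term description needs justification.
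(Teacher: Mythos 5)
Your proposal follows essentially the same route as the paper: both use the identification $\mathcal X_{lDol}\simeq \Spec\,\Sym(\omega_{\mathcal X/\mathcal S}[-1])$ to get $Rq_*\mathcal O_{\mathcal X_{lDol}}\simeq \mathcal O_{\mathcal X}\oplus\omega_{\mathcal X/\mathcal S}[-1]$ for $\mathcal O$-compactness, define the orientation from the degree-$2$ trace (the paper picks the dual description via $H^0(Rr_*\mathcal O_{\mathcal X})\cong\mathcal O_{\mathcal S}$, which is the same datum), and prove non-degeneracy by identifying a perfect complex on $\mathcal X_{lDol}$ with a two-term Higgs complex $[E\xrightarrow{\phi}E\otimes\omega]$ whose Grothendieck--Serre dual is $[E^{\vee}\xrightarrow{-\phi^{\vee}}E^{\vee}\otimes\omega][2]$, before invoking \cite[Theorem 2.5]{PTVV}. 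The subtleties you flag (base-change compatibility, reduction via Lemma \ref{Corr1}) are exactly the points the paper handles, so the plan is sound and matches the published argument.
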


\smallskip

\begin{proof} \textsf{\underline{Proof of $\mathcal O$-compactness:}} Let us begin with recalling the notations for the following maps from Section \ref{ConNot}. 
\begin{equation}
\begin{tikzcd}
\mathcal X_{Dol/\mathcal{S}}\arrow["q"]{r}\arrow[bend right=20,swap, "p"]{rr}  & \mathcal X \arrow["r"]{r} & \mathcal S
\end{tikzcd}
\end{equation}
From \eqref{equi1}, we have 
\begin{equation}\label{eq0012}
 Rq_*\mathcal O_{\mathcal X_{Dol/\mathcal{S}}}\cong \mathcal O_{\mathcal X}\oplus \omega_{\mathcal X/\mathcal S}[-1], 
\end{equation}
which is clearly a perfect complex. Therefore, $\mathcal O_{\mathcal X_{Dol/\mathcal{S}}}$ is $\mathcal O$-compact relative to $\mathcal{S}$.

\smallskip

\noindent
\textsf{\underline{Proof of $\mathcal O$-orientation:}} Using \eqref{eq0012}, we have
\begin{align*}
( Rp_*\mathcal O_{\mathcal X_{Dol/\mathcal{S}}})^{\vee}[-2] & \cong ( Rr_*\mathcal O_{\mathcal X}\oplus  Rr_*\omega_{\mathcal X/\mathcal S}[-1])^{\vee}[-2] \\ & \cong ( Rr_*\mathcal O_{\mathcal X})^{\vee}[-2]\oplus ( Rr_*\omega_{\mathcal X/\mathcal S}[-1])^{\vee}[-2].
\end{align*} 
By Serre duality, we have 
\begin{align*}
 Rr_*(\omega_{\mathcal X/\mathcal S}[-1])\cong ( Rr_*((\omega_{\mathcal X/\mathcal S}[-1])^{\vee}\otimes \omega_{\mathcal X/\mathcal S}[1]))^{\vee}\cong ( Rr_*(\mathcal O_{\mathcal X}[2]))^{\vee}\cong (Rr_*\mathcal O_{\mathcal X})^{\vee}[-2]\\
\implies ( Rr_*(\omega_{\mathcal X/\mathcal S}[-1]))^{\vee}[-2]\cong  Rr_*(\mathcal O_{\mathcal X}[2])[-2]\cong  Rr_*\mathcal O_{\mathcal X} \hspace{6em}
\end{align*} 
Notice that $H^0(Rr_*\mathcal O_{\mathcal X})\cong \mathcal O_{\mathcal S}$. We choose an isomorphism once and for all and denote it by $\eta$.
This defines an element 
\begin{equation}
\eta \in ( Rp_*\mathcal O_{\mathcal X_{Dol/\mathcal{S}}})^{\vee}[-2]=R\mathcal{H}om_{\mathcal{S}}(Rp_*\mathcal O_{\mathcal X_{Dol/\mathcal{S}}}, \mathcal O_{\mathcal S}[-2]).
\end{equation}
We want to show that it is an $\mathcal O-$orientation.

Let $\mathcal E$ be a perfect complex on $\mathcal X_{Dol/\mathcal{S}}\times_{\mathcal S} \textbf{Spec}(A)$, for an $A\in \textbf{cdga}^{\leq 0}_{{\mathcal S}}$. We have to show that the following map 
\begin{equation}\label{GSD}
(-\cap \eta_A): R{p_A}_*\mathcal E\longrightarrow (R{p_A}_*(\mathcal E^{\vee}))^{\vee}[-2]=(R{p_A}_*(\mathcal E^{\vee})[2])^{\vee}
\end{equation}
is a quasi-isomorphism of $A$-dg-modules, where $\eta_A:  Rp_{A*}\mathcal O_{\mathcal X_{Dol, A}}\longrightarrow A[-2]$ is the derived pullback of $\eta:  Rp_*\mathcal O_{\mathcal X_{Dol/\mathcal{S}}}\longrightarrow \mathcal O_{\mathcal S}[-2]$ under the map $\spec (A)\longrightarrow \mathcal S$ and $(R{p_A}_*(\mathcal E^{\vee}))^{\vee}$ is the derived $A$-dual of $R{p_A}_*(\mathcal E^{\vee})$. Here, 
\[
p_A: \mathcal X_{Dol/\mathcal{S}}\times_{\mathcal S} \textbf{Spec}(A) \longrightarrow \textbf{Spec}(A)
\] is the projection. The left hand side is isomorphic to $ R{r_A}_*\mathsf{Dol}_{(E_A, \phi_A)}$, where 
\begin{align*}
\mathsf{Dol}_{(E_A, \phi_A)}:=[E_A\xrightarrow{\phi_A} E_A\otimes \omega_{\mathcal X_A, A}] \\
0 \hspace{4cm} 1
\end{align*}
corresponding to $R{q_A}_*\mathcal E_A$ (here, $E_A$ is in degree $0$). The right hand side is isomorphic to $( R{r_A}_*(\mathsf{Dol}^{\mathsf{D}}_{(E_A, \phi_A)}))^{\vee}$, where 
\begin{align}\label{0002}
    \mathsf{Dol}^{\mathbf{D}}_{(E_A, \phi_A)}:=[E^{\vee}_A\xrightarrow{-\phi^{\vee}_A} E^{\vee}_A\otimes \omega_{\mathcal X_A, A}]\\
    -2 \hspace{3.5cm} -1
\end{align}   
where $\phi^{\vee}_A$ is the image of $\phi_A$ under the natural isomorphism $\End (E_A)\cong \End (E^{\vee}_A)$. Now since the Grothendieck-Serre dual of the complex $\mathsf{Dol}_{(E_A, \phi_A)}$ is the same as $\mathsf{Dol}^{\mathbf{D}}_{(E_A, \phi_A)}$, the morphism \eqref{GSD} is quasi-isomorphism. (We have used the notation $\mathsf{Dol}^{\mathbf{D}}_{(E_A, \phi_A)}$ in \eqref{0002} to disambiguate between the ordinary dual and the Serre dual of the complex  $\mathsf{Dol}_{(E_A, \phi_A)}$.)

\smallskip

Therefore, we have shown that $\mathcal X_{Dol/\mathcal{S}}$ is $\mathcal O$-compact and $\mathcal O$-oriented over ${\mathcal S}$ and from \cite[Theorem 2.5]{PTVV}, it follows that $\map_{\mathcal S}(\mathcal X_{Dol/\mathcal{S}}, BGL_{n}(\mathcal{O}_{\mathcal{S}}))$ has a $0$-shifted relative symplectic structure over ${\mathcal S}$.
\end{proof}

\medskip

\begin{Rem} \label{Remqq}
The above theorem is more broadly applicable to any given family of reduced Gorenstein projective algebraic curves $(\mathcal{X}/S$. The main focus of the theorem is on two properties: $\mathcal{O}$-compactness and $\mathcal{O}$-orientedness of the family $\mathcal{X}_{\text{Dol}}/S$. These properties hold in this context due to the correspondences discussed in Remark \ref{OCOR}. 

Firstly, these correspondences apply to any geometric Derived stacks, as noted in \cite[Section 5.1]{PS}. In particular, they will apply to $\mathcal{X}_{\text{Dol}}/S$ for any given family of projective algebraic curves $\mathcal{X}/S$. However, the proof also depends on the fact that the dualizing sheaf is locally free and that Grothendieck-Serre duality holds. For this reason, the Gorenstein condition is essential.
\end{Rem}

\bigskip

Fix an integer $n$, which will represent rank in the moduli problem. Let $S$ be the spectrum of a discrete valuation ring on $\mathbb k$. We start with the set-up as in \S \ref{DegeCourbes}, i.e., with a semistable curve $\mathcal{X} \to S$ whose generic fiber is smooth and whose closed fiber has a single node. Let us denote the relative dualizing sheaf by $\omega_{\mathcal X/S}$. Let $\mathfrak M$ denote the stack of expanded degenerations of $\mathcal{X}/S$  bounded by the integer $n$. We denote the universal curve over $\mathfrak M$ by $\mathfrak{X}$ (see subsection \ref{Fam}). We drop the subscript $\mathfrak M$ from $\mathfrak X_{\mathfrak M}$, for simplicity. We denote the relative logarithmic Dolbeault stack of $\mathfrak{X}/\mathfrak{M}$  by $\mathfrak X_{Dol/\mathfrak{M}}$.

\

\begin{Prop}\label{qs11}
The morphism $\map_{_{\mathfrak M}}(\mathfrak{X}_{Dol/\mathfrak{M}}, BGL_n(\mathcal{O}_{\mathfrak M})) \longrightarrow \mathfrak M$ is a quasi-smooth morphism of derived Artin stacks.
\end{Prop}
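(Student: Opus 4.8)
The plan is to reduce the statement to a local computation of the relative cotangent complex of the mapping stack, using the general deformation theory of mapping stacks from \cite{PTVV} together with the explicit description of $\mathcal X_{\mathfrak M, lDol}$ afforded by Lemma \ref{Corr1} and the equivalence \eqref{equi1}. First I would note that $\map_{\mathfrak M}(\mathcal X_{\mathfrak M, lDol}, BGL_n\times\mathfrak M)$ is the relative mapping stack over $\mathfrak M$ with source the family $\mathcal X_{\mathfrak M, lDol}\to\mathfrak M$, which by Theorem \ref{main} (applied with $\mathcal S=\mathfrak M$, using the family of semistable curves $\mathcal X_{\mathfrak M}/\mathfrak M$) is $\mathcal O$-compact over $\mathfrak M$; $BGL_n$ is a smooth Artin stack with perfect cotangent complex, so by the general existence results for mapping stacks (e.g. \cite[Theorem 2.5, Corollary 2.13]{PTVV} or Lurie's representability), the mapping stack is a derived Artin stack locally of finite presentation over $\mathfrak M$. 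Since $\mathfrak M\to S$ is (classically) smooth and $S$ is a scheme, $\mathfrak M$ is a locally finitely presented Artin stack over $\mathbb k$, and hence so is the mapping stack; this gives the ``locally of finite presentation'' half of quasi-smoothness.

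Next I would identify the relative cotangent complex. For a point of the mapping stack corresponding to $(T\to\mathfrak M, \mathcal E)$ with $\mathcal E$ a rank-$n$ vector bundle on $\mathcal X_{\mathfrak M, lDol}\times_{\mathfrak M} T$, the standard formula gives
\begin{equation}
\mathbb T_{\map_{\mathfrak M}(\mathcal X_{\mathfrak M, lDol}, BGL_n\times\mathfrak M)/\mathfrak M}\big|_{(\mathcal E)} \;\cong\; Rp_{T*}\big(\mathcal E nd(\mathcal E)\big)[1],
\end{equation}
where $p_T\colon \mathcal X_{\mathfrak M, lDol}\times_{\mathfrak M}T\to T$ is the projection, using that $\mathbb T_{BGL_n}=\mathfrak{gl}_n[1]$ and that the source is $\mathcal O$-compact (so the pushforward is perfect). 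Combining this with \eqref{equi1} — which presents $\mathcal X_{\mathfrak M, lDol}$ fibrewise as $\Spec \Sym(\omega_{\mathcal X_{\mathfrak M}/\mathfrak M}[-1])$, so that $Rq_*$ of a bundle is computed by the two-term Higgs complex as in the proof of Theorem \ref{main} — the relevant pushforward becomes $R\Gamma$ along $\mathcal X_{\mathfrak M}/\mathfrak M$ of the Dolbeault complex $[\mathcal E nd(E)\xrightarrow{[\phi,-]}\mathcal E nd(E)\otimes\omega_{\mathcal X_{\mathfrak M}/\mathfrak M}]$. Since $\mathcal X_{\mathfrak M}\to\mathfrak M$ has relative dimension one with fibres Gorenstein curves, $R\Gamma$ of any coherent complex concentrated in degrees $[0,1]$ lies in degrees $[0,2]$; shifting by $[1]$ as above, the tangent complex lies in degrees $[-1,1]$, i.e. the relative cotangent complex has Tor-amplitude in $[-1,1]$. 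This is exactly the quasi-smoothness condition of Definition \ref{qs}.

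I expect the main obstacle to be bookkeeping rather than conceptual: one must check carefully that the source family $\mathcal X_{\mathfrak M, lDol}\to\mathfrak M$ is genuinely $\mathcal O$-compact and that the relevant perfectness and finite-presentation statements hold relative to the Artin (not just affine) base $\mathfrak M$, so that the mapping-stack representability and cotangent-complex formula of \cite{PTVV} apply verbatim; this is where one invokes Theorem \ref{main} with $\mathcal S=\mathfrak M$ and the fact that $\mathcal X_{\mathfrak M}\to\mathfrak M$ is a flat family of (semistable, hence Gorenstein) curves. Once that is in place, the Tor-amplitude bound is immediate from the fibre dimension being one, as above. A secondary point worth spelling out is that quasi-smoothness is étale-local on $\mathfrak M$, so one may freely pass to the atlas $[S[n]/G[n]]\to\mathfrak M$ of Lemma \ref{et1}, or even to $S[n]$, to make the computation of $Rp_{T*}$ completely explicit.
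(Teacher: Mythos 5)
Your proposal is correct and follows essentially the same route as the paper: identify the relative tangent complex as the (shifted) derived pushforward of $\mathcal End\,\mathcal E$, rewrite it via the two-term Dolbeault/Higgs complex on the curve, and deduce the Tor-amplitude bound $[-1,1]$ from the one-dimensionality of the fibres. The only cosmetic differences are that the paper cites To\"en's theorem on proper lci morphisms for perfectness (where you invoke $\mathcal O$-compactness and the mapping-stack machinery of \cite{PTVV}) and phrases the amplitude bound via Grothendieck--Serre duality rather than your direct degree count.
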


\begin{proof}
Let us denote $\map_{_{\mathfrak M}}(\mathfrak{X}_{Dol/\mathfrak{M}}, BGL_n(\mathcal{O}_{\mathfrak{M}}))$ by $M$, for simplicity of notation. As before, the natural map $p : \mathfrak{X}_{Dol,\mathfrak{M}} \to \mathfrak{M}$ factors as 
\begin{equation} \label{eq:XDolM}
\begin{tikzcd}
\mathfrak{X}_{Dol/\mathfrak{M}}\arrow["q"]{r}\arrow[bend right=20,swap, "p"]{rr}  & \mathfrak{X} \arrow["r"]{r} & \mathfrak{M}
\end{tikzcd}
\end{equation}
The relative tangent complex of the morphism $M\longrightarrow \mathfrak M$ is given by 
\begin{equation}
\mathbb T_{M/\mathfrak M}\cong  Rp_*\End ~\mathcal E[1]\cong   Rr_*\mathcal C(E, \phi)[1]
\end{equation}
where $\mathcal C(E, \phi)$ denotes the following complex
\begin{equation}\label{TC113}
\left[ \End (E)\xrightarrow{[-, \phi]} \End (E)\otimes \mathsf{pr}_{\mathfrak{X}}^{*}\omega_{\mathfrak{X}/\mathfrak{M}}
\right]
\end{equation} 
with $\End (E)$ sitting in the degree $0$. Here $\mathcal E$ denotes the universal sheaf on \linebreak $\mathfrak{X}_{Dol/\mathfrak{M}}\times_{\mathfrak{M}} M$ and $(E, \phi)$ denotes the corresponding universal relative Higgs complex on $\mathfrak{X}\times_{\mathfrak{M}} M$. From \cite[Theorem 0.3]{To}, it follows that $\mathbb L_{M/\mathfrak M}$ is a perfect complex because the map $r: \mathfrak{X} \longrightarrow \mathfrak{M}$ is a proper representable local complete intersection morphism. Also, using Grothendieck-Serre duality we can see that the complex $Rr_*\mathcal C(E, \phi)[1]$ has amplitude in $[-1, 1]$. Therefore, we conclude that the morphism 
\[
\map_{_{\mathfrak M}}(\mathfrak{X}_{Dol/\mathfrak{M}}, BGL_n(\mathcal{O}_{\mathfrak M}))\longrightarrow \mathfrak M
\] is a quasi-smooth morphism of derived Artin stacks.
\end{proof}
\smallskip

\begin{Rem}
    It is important to note that for the proof of the proposition to be valid, we required two key criteria regarding the family of curves $\mathfrak{X} \to \mathfrak{M}$: properness and the local complete intersection property (e.g. nodal or cuspidal singularities). These criteria ensure that the derived pushforward of any perfect complex remains perfect and that the relative version of Grothendieck-Serre duality holds for such a family of curves. 
\end{Rem}

\smallskip

\begin{Thm}\label{main112}
The derived Artin stack $\map_{_{\mathfrak M}}(\mathfrak{X}_{Dol/\mathfrak{M}}, BGL_n(\mathcal{O}_{\mathfrak M}))$ has a natural relative $0$-shifted log-symplectic structure over $S$. Moreover, this form coincides with the relative log-symplectic form described in \cite{D}.
\end{Thm}

\smallskip

\begin{proof}
Recall that $\mathfrak{L}^{0}_{S,\mathsf{lf}}$ denotes the classifying stack of log morphisms to $S$ from schemes equipped with locally free log structures. The log structure on $\mathfrak M$, which we discussed in Proposition \ref{rellog101}, then gives a map $\mathfrak M\to \mathfrak{L}^{0}_{S,\mathsf{lf}}$. 

Consider the composite morphism. 
\[
M:=\map_{_{\mathfrak M}}(\mathfrak{X}_{Dol/\mathfrak{M}}, BGL_n(\mathcal{O}_{\mathfrak M})) \longrightarrow \mathfrak M\longrightarrow \mathfrak{L}^{0}_{S,\mathsf{lf}}
\] 
The composite morphism induces a log structure on $M$, which is the same as the pull-back of the log structure from $\mathfrak M$. Notice that the map $M \to \mathfrak{M}$  is quasi-smooth (Proposition~\ref{qs11}) and the map 
$\mathfrak{M} \to \mathfrak{L}^{0}_{S,\mathsf{lf}}$ is smooth with $\mathbb L_{\mathfrak M/\mathfrak{L}^{0}_{S,\mathsf{lf}}}\cong 0$ (Proposition \ref{rellog101}). Therefore, it follows that 

\begin{equation}\label{TC112}
\mathbb L_{M/\mathfrak{L}^{0}_{S,\mathsf{lf}}}\cong \mathbb L_{M/\mathfrak M}\cong  Rr_*\mathcal C(E, \phi)[1].
\end{equation}

So, the relative log-cotangent complex of the map $M\longrightarrow S$ is isomorphic to the relative cotangent complex of the map $M\longrightarrow \mathfrak M$. From Theorem \ref{main}, it follows that the stack $M$ has a $0$-shifted symplectic structure relative to the stack $\mathfrak M$, which is, by definition \ref{slogsym}, a $0$-shifted log-symplectic structure on $M$ relative to $S$. 

The log-symplectic pairing can be described as follows. The stack $BGL_n$ has a $2$-shifted symplectic form given by the trace pairing:
\begin{equation}\label{BG}
\mathbb{T}_{BGL_n}\wedge \mathbb{T}_{BGL_n}=\mathfrak gl_n[1]\wedge \mathfrak gl_n[1]\xrightarrow{(\mathsf x,\mathsf y)\mapsto Trace(\mathsf x\cdot \mathsf y)} \mathbb k[2].
\end{equation}
Now let $A$ be an object of $\cdga^{\leq 0}$ and let $\Spec(A) \to M$ be an $A$-valued point of $M$. Write $\mathfrak{X}_{A}$ for the pullback $\mathfrak{X}\times_{\mathfrak{M}} \Spec(A)$ of the universal curve to 
$\Spec(A) \to M \to \mathfrak{M}$. We will also write 
$\mathfrak{X}_{Dol, A}$ for the pullback $\mathfrak{X}_{Dol/\mathfrak{M}}\times_{\mathfrak{M}} \Spec(A)$ for the pullback of the relative Dolbeault stack. Now, by the definition of $M$, the map $\Spec(A) \to M$ gives us a map \[f_A: \mathfrak{X}_{Dol,A}\longrightarrow BGL_n\times \Spec (A)\] which corresponds to a vector bundle $\mathcal{E}_A$ on $\mathfrak X_{Dol, A}$. The vector bundle $\mathcal E_A$ on $\mathfrak X_{Dol, A}$ corresponds to a Higgs field \[
(E_{A} \xrightarrow{\phi_A} E_{A}\otimes \omega_{\mathfrak{X}_A , A}).
\]
on $\mathfrak X_A$.

We have the following induced pairing by pulling back \eqref{BG}.
\begin{equation}
    f^*_A \mathbb{T}_{BGL_n}\otimes f^*_A \mathbb{T}_{BGL_n}= \End ~\mathcal{E}_A[1]\otimes \End ~\mathcal{E}_A [1]\xrightarrow{Tr:=Trace} \mathcal O_{\mathfrak{X}_{Dol, A}}[2]
\end{equation}
Now by pushing it forward by the map $q_A$, we get 
\begin{equation}\label{pair556}
\begin{tikzcd}[scale=0.1]
   {q_A}_*(\End ~E_A[1] \otimes  \End ~E_A[1])\arrow{rr} &&  (\mathcal O_{\mathcal X_A}\oplus \omega_{\mathfrak{X}_A, A}[-1])[2]
\end{tikzcd}
\end{equation}
The complex  ${q_A}_*(\End ~\mathcal{E}_A [1])=[\End ~E_A \xrightarrow{[-,\phi_A]} \End ~E_A \otimes \omega_{\mathfrak{X}_A, A}]$ can be used to describe the pullback of the pairing in \eqref{pair556} by the natural map 
\[
{q_A}_*(\End ~\mathcal{E}_A [1]) \otimes {q_A}_*(\End ~\mathcal{E}_A [1]) \to {q_A}_*(\End ~\mathcal{E}_A[1] \otimes \End ~\mathcal{E}_A [1])
\]
as the following map of complexes.
\begin{small}
\begin{equation}
    \begin{tikzcd}[scale cd=.85]
    -2 & -1 & 0\\
        (\End ~E_A)^{\otimes 2} \arrow["d_1"]{r} \arrow["Tr"]{d} & \End ~E_A \otimes (\End ~E_A \otimes \omega_{\mathfrak{X}_A, A})\oplus  (\End ~E_A \otimes \omega_{\mathfrak{X}_A, A})\otimes \End ~E_A \arrow["d_2"]{r} \arrow["Tr"]{d} & (\End ~E_A \otimes \omega_{\mathfrak{X}_{A}, A})^{\otimes 2} \arrow{d}\\
        \mathcal O_{\mathfrak{X}_A} \arrow["0"]{r} & \omega_{\mathfrak{X}_{A}, A} \rar & 0
    \end{tikzcd}
\end{equation}
\end{small}
where the map $d_1:=([-, \phi_A], [-, \phi_A])$ and $d_2:=[-, \phi_A]\otimes \mathbb 1+ \mathbb 1\otimes [-, \phi_A]$.
Finally, by pushing forward \eqref{pair556} by the map $r_A$ and then composing with the orientation $\eta_A$ (see \eqref{GSD}), we get our log-symplectic pairing. From the above description it is evident that this log-symplectic form coincides with the log-symplectic form discussed in \cite{D}.
\end{proof}

\begin{Rem}
    Note that the proof of the first statement of the above Theorem depends on the following crietria: (i) there is a log-structures on the space $\mathfrak M$, (ii) the map $M\to \mathfrak M$ is quasismooth, (iii) there is a relative 0-shifted symplectic form on $M\to \mathfrak M$ and finally the fact that (iv) $\mathbb L_{\mathfrak M/\mathfrak{L}^{0}_{S,\mathsf{lf}}}\cong 0$. With these four hypothesis, we have shown that there is a relative 0-shifted shifted symplectic form on $M\to S$. The second statement of the theorem is about the specific situtation appearing from a one parameter degeneration of a family of stable curves.  
\end{Rem}

\section{\textbf{Completeness of the Hitchin map}}
In this section, we define the Hitchin map on the ordinary Artin stack of Gieseker-Higgs bundles $M^{cl}_{Gie}$ (see \ref{Gies21}). We prove that the Hitchin map is complete. This result was proved in \cite{2a} for the Hitchin map as defined on the moduli scheme of stable Higgs bundles in the case where rank and degree are co-prime. We prove it here for the moduli stack, and the argument does not require us to assume that the rank and degree are co-prime.

\

\begin{Def} \cite[p7]{33}
Working over $\Spec \mathbb{k}$, a morphism of Artin stacks $f: \mathcal Y_1\longrightarrow \mathcal Y_2$ is called \textbf{complete} if given any discrete valuation ring $A$ over $\mathbb{k}$ with fraction field $K$ and any commutative diagram 
\begin{equation}
\begin{tikzcd}\label{eq0234}
\Spec~K\arrow{r} \arrow{d} & \mathcal Y_1\arrow["f"]{d}\\
\Spec~A\arrow{r}& \mathcal Y_2
\end{tikzcd}
\end{equation}
there exists finite field extension $K\longrightarrow K'$ with $A'$ the integral closure of $A$ in $K'$ and a lift over (the dotted arrow) $\Spec~A'\longrightarrow \mathcal X$ making all the triangles in the diagram commute.
\begin{equation}
\begin{tikzcd}
\Spec~K'\arrow{r} \arrow{d} & \Spec~K\arrow{r} \arrow{d} & \mathcal Y_1\arrow["f"]{d}\\
\Spec~A'\arrow{r} \arrow[dotted]{urr} & \Spec~A\arrow{r} & \mathcal Y_2
\end{tikzcd}
\end{equation}
A morphism of derived Artin stacks is called \textbf{complete} if the underlying morphism of ordinary Artin stacks is complete.
\end{Def}

\smallskip

\begin{Rem}
    Notice that from the definition it is obvious, in particular, that the fibers of a complete morphism of Artin stacks are complete because the morphism in the diagram \ref{eq0234} is allowed to factor through the map $\Spec~A\to \Spec k\to \mathcal Y_2$. The notion of completeness is the same as the notion of uiversal closedness of the morphism of stacks (\cite[Proposition 26.20.6 (Valuative criterion of universal closedness).]{34}). 
\end{Rem}

We recall the following setup from before. Let $S$ be a spectrum of a discrete valuation ring over $\mathbb k$. We start with the set-up as in \S \ref{DegeCourbes}, i.e., with a semistable curve $\mathcal{X} \to S$ whose generic fiber is smooth and whose closed fiber has a single node. Let us denote the relative dualising sheaf by $\omega_{\mathcal X/S}$. Let $\mathfrak M$ denote the stack of expanded degenerations of $\mathcal{X}/S$  bounded by the integer $n$. We denote the universal curve over $\mathfrak M$ by $\mathfrak{X}$ (see subsection \ref{Fam}). We drop the subscript $\mathfrak M$ from $\mathfrak X_{\mathfrak M}$, for simplicity. We denote the relative logarithmic Dolbeault stack of $\mathfrak{X}/\mathfrak{M}$  by $\mathfrak X_{Dol/\mathfrak{M}}$.

\

\begin{Def}\textbf{(Hitchin map)}\label{h11}
We recall the following two well-known definitions for the family of curves $\pi: \mathcal X \to S.$
\begin{enumerate}
\item $B:=\mathrm{Tot}(\oplus^n_{i=1} R^0 \pi_* \omega_{\mathcal X/S})$. It is a vector bundle over the spectrum $S$ of a discrete valuation ring over $\mathbb{k}$. As is standard, we call $B$ the Hitchin base. 
\item There is a natural map \[h: M:=\map_{_{\mathfrak M}}(\mathfrak{X}_{Dol/\mathfrak{M}}, BGL_n(\mathcal{O}_{\mathfrak M}))\longrightarrow B,\] which sends a family of Higgs bundles $(\mathfrak X_T, \mathcal E_T, \phi_T)$ parametrised by an affine scheme $T$ over $S$ to \[(- Trace(\phi_T), Trace(\wedge^2 \phi_T),\dots , (-1)^i Trace(\wedge^i\phi_T),\dots,(-1)^n Trace(\wedge^n \phi_T)). \]
inside $T\times_{S} B$
\end{enumerate}
\end{Def}

\

Now consider the substack $M^{cl}_{Gie} \subseteq \tau_0(M) \subseteq M$ in the classical truncation $\tau_0(M)$ of $M$ parameterizing Higgs bundles whose underlying vector bundle satisfies the Gieseker conditions. That is, it is generated globally on the fibers of the map $X_k\to X_0$ and its push-forward to $X_0$ is torsion-free. See \ref{Gies21} for a detailed description. 

\

The Hitchin map $h$ (see definition \ref{h11}) defines a map $M^{cl}_{Gie}\to B$ by restriction. 

\begin{Thm}\label{complete11}
The morphism $h|_{M^{cl}_{Gie}} \colon M^{cl}_{Gie} \longrightarrow B$ is complete.
\end{Thm}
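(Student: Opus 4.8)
The plan is to verify the valuative criterion of completeness directly, using the completeness of Jun Li's stack of expanded degenerations as the key input. Suppose we are given a discrete valuation ring $A$ with fraction field $K$, a $K$-point of $M^{cl}_{Gie}$ and an $A$-point of $B$ making the square commute. Spelling this out: over $\Spec K$ we have a curve in the family $\mathcal X[n]\to S[n]$ — equivalently a map $\Spec K\to \mathfrak M$ — together with a Higgs bundle $(\mathcal E_K,\phi_K)$ of rank $n$ on that (possibly bubbled) curve, whose characteristic polynomial coefficients $(-\Tr\phi_K,\dots,(-1)^n\Tr\wedge^n\phi_K)$ extend to a section of $B=\oplus_{i=1}^n R^0\pi_*\omega_{\mathcal X/S}^{\otimes i}$ over $\Spec A$. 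We must produce, after a finite extension $K'/K$ with integral closure $A'$, an $A'$-point of $M^{cl}_{Gie}$ restricting to the given one and compatible with the prescribed characteristic data.

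The first step is to extend the underlying curve. The composite $\Spec K\to \mathfrak M\to S$ together with the map $\Spec A\to S$ (through which the $A$-point of $B$ factors, since $B$ lives over $S$) gives an $A$-valued point of $S$ extending the $K$-point. Because $\mathfrak M\to S$ is proper (indeed the stack of expanded degenerations is proper over $S$ — its closed fibre being the normal-crossing degeneration, and $W[n]/S[n]$ providing a proper atlas up to the torus action), the valuative criterion for $\mathfrak M\to S$ produces, after a finite base change $K'/K$, an $A'$-point of $\mathfrak M$ extending the given $K$-point. This yields a family of Gieseker curves $\mathcal X^{\mathrm{mod}}_{A'}\to\Spec A'$ whose generic fibre is the original bubbled curve. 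The second step is to extend the Higgs bundle over this curve. Here one runs the classical argument of Langton/Nitsure adapted to the Higgs setting (this is essentially the content of \cite{2a} on the moduli scheme, but now carried out for arbitrary rank and degree, since we are working with the stack and need not impose a stability-GIT matching): the Higgs sheaf $(\mathcal E_K,\phi_K)$ spreads out to a torsion-free $\phi$-invariant sheaf over a dense open of $\Spec A'$, one takes any coherent $\phi$-invariant extension over the closed point, and then performs elementary modifications (Hecke modifications along the closed fibre preserving $\phi$-invariance) to improve it. The fixed characteristic polynomial — the prescribed $A'$-point of $B$ — bounds the Harder–Narasimhan type of the limit, so the Langton-type descending chain of modifications terminates, producing a locally free $\phi$-invariant extension $(\mathcal E_{A'},\phi_{A'})$; on the bubble components one uses that the limit, being Gieseker-semistable by construction, restricts to a bundle of controlled degree on each $\mathbb P^1$.

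The third step is to check that the limiting object is actually a point of $M^{cl}_{Gie}$, i.e. a genuine Gieseker–Higgs bundle in the sense of the appendix — in particular that the extension is a vector bundle (not merely torsion-free) on the bubbled curve and that it is admissible/semistable along the chain of $\mathbb P^1$'s. This is exactly where the bubbling buys us something over the naive nodal picture: a torsion-free non-locally-free limit on the nodal curve corresponds, after one more blow-up bubble, to a locally free sheaf on a longer chain, and since $\mathfrak M$ already parametrizes chains up to length $n$ (and $n$ bounds the length needed for rank-$n$ sheaves), the destabilizing modifications stay within the family. I expect this step — showing the Langton limit lands in the \emph{Gieseker} locus on the expanded curve rather than escaping to a longer chain or to a non-semistable configuration, and matching this with the boundedness built into $\mathfrak M$ — to be the main obstacle; it requires combining the numerical control from the Hitchin base with the combinatorics of the expanded degeneration. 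Finally, one notes the constructed $A'$-point has the prescribed image in $B$ by construction (the characteristic coefficients of $\phi_{A'}$ specialize to the given section, both being sections of a separated scheme over $A'$ agreeing generically), so all triangles commute, completing the verification of the valuative criterion.
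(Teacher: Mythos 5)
Your proposal reverses the order of operations relative to the paper's proof, and this reversal is where it breaks down. You first fix an extension of the curve (an $A'$-point of $\mathfrak M$, invoking ``properness'' of $\mathfrak M\to S$) and only then try to extend the Higgs bundle over that fixed family of Gieseker curves. But $\mathfrak M\to S$ is not proper --- it is badly non-separated, with many inequivalent extensions of a given generic curve (bubblings of every length $0\leq r\leq n$) --- and, more importantly, the \emph{correct} number of bubbles in the limit is dictated by the local structure of the limiting torsion-free sheaf at the node, which you do not yet have in hand when you choose the curve. For a generic choice of curve extension the Langton-type limit simply will not be a Gieseker vector bundle (its pushforward to $X_0$ need not be torsion-free, and it need not be globally generated on the chain), and no sequence of elementary modifications along the closed fibre of the \emph{fixed} bubbled family will repair this. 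You flag exactly this in your third step as ``the main obstacle,'' but you do not close it, and with your ordering it cannot be closed without effectively redoing the choice of curve --- which is the content you were missing. A secondary issue: your appeal to Harder--Narasimhan types and Gieseker-semistability to terminate the Langton process is out of place here, since the statement is about the full stack and no stability enters anywhere in the paper's argument.

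The paper proceeds in the opposite order. It first pushes the generic Higgs bundle down to the original family $\mathcal X/S$ via $\theta: M^{cl}_{Gie}\to TFH(\mathcal X/S)$ and extends it there as a \emph{torsion-free Higgs pair}: Langton's result on the (possibly singular) surface $\mathcal X\times_S T$ extends the sheaf, \cite[Lemma 6.5]{17a} extends the Higgs field away from the node, and a separate argument (showing $i_*\mathcal E_K/\mathcal F$ is torsion-free by restricting to $X_0$ and identifying the $Tor^1$ term as a subsheaf of $\mathcal F|_{X_0}$) extends the field across the node; the non-flat case $\mathcal X_T\cong X_0\times T$ is handled via generalized parabolic Hitchin pairs. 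Only then does it produce the bubbled curve and the Gieseker--Higgs bundle \emph{simultaneously}, by lifting along the proper morphism $\tilde\theta:\mathcal Y^H_S\to R^\Lambda_S$ from the Gieseker--Higgs Quot scheme to the torsion-free Quot scheme (properness from \cite[Proposition 5.11]{2a}; concretely, Remark \ref{complete2}: take the minimal resolution of the surface $\mathcal X_T$, pull back the torsion-free sheaf, and quotient by torsion --- this canonically yields both the correct chain of $\mathbb P^1$'s and a globally generated bundle on it). That construction is precisely the mechanism your step three was missing.
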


\begin{proof}
    Let $  T = \Spec A  $ be the spectrum of a discrete valuation ring over $  \mathbb{k}  $, with fraction field $  K  $. Denote the generic point by $  T^o = \Spec K  $. Suppose that we are given a commutative diagram
\begin{equation}\label{ref55667}
\begin{tikzcd}
T^o\arrow{r} \arrow{d} &M^{cl}_{Gie}\arrow["h"]{d}\\
T\arrow{r}& B
\end{tikzcd}
\end{equation}
We must show that this diagram extends to a morphism $  T \to M^{cl}_{Gie}  $.
Let $  \TFH(\mathcal{X}/S)  $ be the ordinary Artin stack of families of torsion-free $  \omega_{\mathcal{X}/S}  $-valued Higgs pairs on $  \mathcal{X}/S  $ (see Definition \ref{at3}). There is a natural morphism
$$\theta \colon M^{cl}_{Gie} \longrightarrow \TFH(\mathcal{X}/S),
\qquad (\pi \colon \mathfrak{X} \to \mathcal{X},\ \mathcal{E},\ \phi) \mapsto ((\pi_* \mathcal{E}),\ (\pi_* \phi)).$$
This is induced by pushforward along the modification $  \mathfrak{X} \to \mathcal{X}  $. Applying $  \theta  $ to the original diagram gives a commutative triangle
\begin{equation}
\begin{tikzcd}
T^o\arrow{r} \arrow{d} & \TFH(\mathcal X/S)\\
T
\end{tikzcd}
\end{equation}
By \cite[Proposition 6]{La} and \cite[Lemma 6.5]{17a}, there exists a morphism $  T \to \TFH(\mathcal{X}/S)  $ making the triangle commute.
Now consider the surface $  \mathcal{X}_T := \mathcal{X} \times_S T  $. There are two cases:

\medskip

\underline{Case 1.} The map $  T \to S  $ is not faithfully flat.
Then $  \mathcal{X}_T \cong X_0 \times T  $. The normalisation of $  \mathcal{X}_T  $ is $  \widetilde{X}_0 \times T  $. Let $  f \colon \widetilde{X}_0 \to X_0  $ be the normalisation map. By \cite[Proposition 3.2]{DI}, the pullback of the generic Higgs bundle extends to a good generalized parabolic Hitchin pair on $  \widetilde{X}_0 \times T  $, which in turn gives a torsion-free Hitchin pair on $  \mathcal{X}_T  $.

\medskip

\underline{Case 2.} The map $  T \to S  $ is faithfully flat.
Then $  \mathcal{X}_T  $ is a normal surface with an isolated singularity of type $  \mathbb{k}[[x,y,t]]/(xy - t^d)  $. Let $  i \colon \mathcal{X}_{T^o} \hookrightarrow \mathcal{X}_T  $ be the inclusion of the generic fibre. Suppose we have a Higgs bundle $  (\mathcal{E}^o, \phi^o)  $ on $  \mathcal{X}_{T^o}  $. Any vector bundle on the generic fibre extends to a torsion-free sheaf $  \mathcal{F}  $ on $  \mathcal{X}_T  $. We wish to extend the Higgs field to $  \mathcal{F} \to \mathcal{F} \otimes \omega_{\mathcal{X}_T/T}  $.
Consider the diagram
\begin{equation}
\begin{tikzcd}
0\arrow{r} & \mathcal F \arrow{r} & i_*\mathcal E^o\arrow["i_*\phi^o"]{d}\arrow{r} & i_*\mathcal E^{o}/{\mathcal F} \arrow{r} & 0\\
0\arrow{r} & \mathcal F\otimes \omega_{\mathcal X_T/T} \arrow{r} & i_*(\mathcal E^{o}\otimes i^{*}\omega_{\mathcal X_T/T})\arrow{r} & (i_*\mathcal E^{o}/{\mathcal F})\otimes \omega_{\mathcal X_T/T}\arrow{r} & 0
\end{tikzcd}
\end{equation}
By \cite[Lemma 6.5]{17a}, the Higgs field extends over $  \mathcal{X}_T  $ except possibly at the node. The composite map $  \mathcal{F} \to (i_*\mathcal{E}^o / \mathcal{F}) \otimes \omega_{\mathcal{X}_T/T}  $ vanishes generically on the closed fibre. Thus the obstruction lies in the torsion part of $  i_*\mathcal{E}^o / \mathcal{F}  $ (viewed as a sheaf on its support $  X_0  $). Restricting the top sequence to the closed fibre gives
\

\begin{equation}
0\longrightarrow \Tor^1_{_{\mathcal X_T}}(i_*\mathcal E^{o}/{\mathcal F}, \mathcal O_{X_0})\longrightarrow \mathcal F|_{X_0}\longrightarrow (i_*\mathcal E^{o})|_{X_0}\longrightarrow i_*\mathcal E^{o}/{\mathcal F}\longrightarrow 0.
\end{equation}

\

\noindent
Since $  \Tor^1_{_{\mathcal X_T}}(i_*\mathcal E^{o}/{\mathcal F}, \mathcal O_{X_0})= (i_*\mathcal{E}^o / \mathcal{F}) \otimes \mathcal{O}_{\mathcal{X}_T}(-X_0)  $ and $  \mathcal{O}_{\mathcal{X}_T}(-X_0)  $ is locally free, the sheaf $  i_*\mathcal{E}^o / \mathcal{F}  $ is pure (torsion-free on its support) if and only if the Tor term is pure. But the Tor term is a subsheaf of $  \mathcal{F}|_{X_0}  $, hence torsion-free on $  X_0  $. This implies the obstruction vanishes, and the Higgs field extends everywhere.

\medskip

In both cases we obtain a commutative diagram
 \begin{equation}\label{prop112}
\begin{tikzcd}
T^o\arrow{r} \arrow{d} &M^{cl}_{Gie}\arrow["\theta"]{d}\\
T\arrow{r}& \TFH(\mathcal X/S).
\end{tikzcd}
\end{equation}
Choose a connected component $  R^{\Lambda, m_T}_S  $ of an atlas for $  \TFH(\mathcal{X}/S)  $ such that the image of $  T \to \TFH(\mathcal{X}/S)  $ lies in the image of $  R^{\Lambda, m_T}_S \to \TFH(\mathcal{X}/S)  $. Let $  \mathcal{Y}^H_S  $ be the corresponding Quot scheme of Gieseker-Hitchin pairs over $  R^{\Lambda, m_T}_S  $ (see \ref{at2}), with projection $  \tilde{\theta} \colon \mathcal{Y}^H_S \to R^{\Lambda, m_T}_S  $.
Both maps $  R^{\Lambda, m_T}_S \to \TFH(\mathcal{X}/S)  $ and $  \mathcal{Y}^H_S \to M^{cl}_{Gie}  $ are principal $  GL_N  $-bundles for sufficiently large $  N  $, and $  \tilde{\theta}  $ is $  GL_N  $-equivariant (see \ref{equivar}). 

 \begin{equation}
\begin{tikzcd}
\mathcal Y^H_S\arrow["\tilde{\theta}"]{d}[swap]{GL_N-equiv.}\arrow[]{r} \arrow{d} & M^{cl}_{Gie}\arrow["\theta"]{d}\\
 R^{\Lambda, m_T}_S \arrow[]{r} & \TFH(\mathcal X/S)
\end{tikzcd}
\end{equation}

Since $T$ is a spectrum of a discrete valuation ring, any $GL_N$ bundle on $T$ is trivial. Therefore, by choosing a trivialisation of the $GL_N$-bundle $R^{\Lambda, m_T}_S\times_{\TFH(\mathcal X/S)} T\to T$ we get the following lift of the diagram \ref{prop112}.
\begin{equation}
\begin{tikzcd}
T^o\arrow{r} \arrow{d} & \mathcal Y^H_S\arrow["\tilde{\theta}"]{d}\arrow{r} \arrow{d} & M^{cl}_{Gie}\arrow["\theta"]{d}\\
T\arrow{r}& R^{\Lambda, m_T}_S \arrow{r} & \TFH(\mathcal X/S)
\end{tikzcd}
\end{equation}
From \cite[Proposition 5.11]{2a}, it follows that the map $\tilde{\theta}: \mathcal Y^H_S\longrightarrow R^{\Lambda, m_T}_S$ is proper (we discuss the proof below in Lemma \ref{rem:complete1} and Corollary \ref{properness_short} for the reader's convenience). So, there exists a lift $T\longrightarrow \mathcal Y^H_S$. The composite map $T\longrightarrow \mathcal Y^H_S\longrightarrow M^{cl}_{Gie}$ is our desired extension. 
This completes the proof.
\end{proof}

\begin{Lem} \label{rem:complete1}
    Let $Y\longrightarrow R$ be a quasi-projective morphism of separated schemes over a spectrum of a discrete valuation ring $S$ which is an isomorphism over the generic point of $S$. Assume that both $Y$ and $R$ are flat over $S$. Suppose that any map $T\longrightarrow R$ with $T$ flat over $S$ can be lifted to a map $T\longrightarrow Y$. Then the map $Y\longrightarrow R$ is proper.
\end{Lem}

\begin{proof}
 Take the closure of $Y$ inside some projective bundle (over $R$) to get a projective morphism $\overline{Y}\longrightarrow R$. Take any element $y\in \overline{Y}\setminus Y$. Then there exists a map $T\longrightarrow \overline{Y}$ (here, $T$ is a spectrum of a d.v.r) passing through $y$ and flat over $S$. Therefore, we get a map $T\longrightarrow R$. But then, by our lifting assumption,  this map can be lifted to a map $T\longrightarrow Y$. Let $y'\in Y$ be the image of the closed point of $T$ under this map. But, by the separateness of $\overline{Y}$, we must have $y=y'$ and therefore $\overline{Y}=Y$, and the map $Y\longrightarrow R$ is proper.  
\end{proof}

\begin{Cor}\cite[Proposition 5.11]{2a}\label{properness_short}
 The map $\tilde{\theta}: \mathcal Y^H_S\longrightarrow R^{\Lambda, m_T}_S$ is proper.
\end{Cor}

\begin{proof}
    Since the map $\tilde{\theta}$ is quasi-porjective, we choose a closure $\overline{\mathcal Y^H_S}\longrightarrow R^{\Lambda, m_T}_S$ inside some projective bundle over $R^{\Lambda, m_T}_S$. Let $y\in \overline{\mathcal Y^H_S}\setminus {\mathcal Y^H_S}$. Then there exists a map $T\longrightarrow \overline{\mathcal Y^H_S}$ (here, $T$ is a spectrum of a d.v.r) passing through $y$ and flat over $S$. Consider the composite map $T\to \overline{\mathcal Y^H_S}\to R^{\Lambda, m_T}_S$. Let $(\mathcal F_T, \phi_T)$ be the flat family of torsion-free Higgs pair on $\mathcal X_T:=\mathcal X\times_S T$ corresponding to the map $T\to R^{\Lambda, m_T}_S$. 

    \
    
    \underline{Claim:} The map $T\to R^{\Lambda, m_T}_S$ can be lifted to a map $T\to \mathcal Y^H_S$. 

    \
    
    \underline{Proof of the claim:} Notice that the surface $\mathcal X_T$ may have a singularity of type $\frac{\mathbb k[|x,y,t|]}{(xy-t^n)}$ because the map $T\to S$ is a faithfully-flat map of spectrum  of discrete valuation rings. Let $r_{_T}: \mathcal X^{{res}}_T\longrightarrow \mathcal X_T$ be the minimal resolution of singularities. Then from \cite[Proposition 6.5]{Lip} and \cite[\S 4]{2a} it follows that the vector bundle $\mathcal E_T:=\frac{r_{_T}^*\mathcal F_T}{Torsion}$ has the property that $(r_{_T})_*\mathcal E_T\cong\mathcal F_T$. Notice also that by construction and by the property that $(r_{_T})_*\mathcal E_T\cong \mathcal F_T$, it follows that the natural map $r_{_T}^*(r_{_T})_*\mathcal E_T\longrightarrow \mathcal E_T$ is a surjective map of sheaves, which means $(\mathcal E_T)|_R$ is globally generated. Here $R$ denotes the chain of rational curves in $\mathcal X^{{res}}_T$. Hence, by definition (Definition \ref{Gie1101} and Remark \ref{AltGie}), the vector bundle $\mathcal E_T$ is a Gieseker vector bundle. The pullback of the Higgs field $\phi_{_T}$ defines a Higgs field on $\mathcal E_T$. The pair $(\mathcal E_T, \phi_T)$ is the desired lift. 

    Therefore from Lemma \ref{rem:complete1} it follows that the map $\tilde{\theta}: \mathcal Y^H_S\longrightarrow R^{\Lambda, m_T}_S$ is proper.

\end{proof}

\noindent

\

\section{\textbf{Flatness of the Hitchin map}}
In this section, we study the reduced global nilpotent cone of $M^{cl}_{Gie}$ (see \ref{Gies21}), which is the reduced  fibre of $h$ over the point $0\in B$. We prove that every irreducible component of of the reduced nilpotent cone has an open subset which is an isotropic substack of $M$ (the derived stack of Higgs bundles) with respect to its log-symplectic form. We use this to compute the dimension of the reduced nilpotent cone and to show that the Hitchin map is flat. 

\

\begin{Def}
The nilpotent cone is the Hitchin fibre over the zero section $0_S$ of $B\longrightarrow S$, i.e., the following fibre product (fibre product of classical Artin stacks)
\begin{equation}
\begin{tikzcd}
\mathcal Nilp:=0_S\times_{B} M^{cl}_{Gie} \arrow{r}\arrow{d} & M^{cl}_{Gie}\arrow{d}\\
0_S\arrow{r} & B 
\end{tikzcd}
\end{equation}
The fibre product is usually non-reduced. Let $\mathcal Nilp^{^{red}}$ be the reduction of $\mathcal Nilp$. It is a stack over the spectrum of a discrete valuation ring $S$. We denote its closed fibre by $\mathcal Nilp^{^{red}}_{_0}$.
\end{Def}

\

\begin{Rem}\label{Nilp12}
A point in $\mathcal Nilp^{^{red}}_{_0}$ is a tuple $(X_r, \mathcal E, \phi)$, where $X_r$ is a Gieseker curve with $r$ number of $\mathbb P^1$ bubbles $(0\leq r\leq n)$, $\mathcal E$ is a Gieseker vector bundle on $X_{r}$  and $\phi$ is a nilpotent Higgs field, that is, $\phi^n=0$. Given a nilpotent Higgs field, we get a canonical filtration by saturated torsion-free sub-sheaves 

\begin{equation}\label{fflag}
\mathcal E^0:=0\subsetneq \mathcal E^1:=\ker \phi\subsetneq \mathcal E^{2}:=\ker \phi^{2} \subsetneq \dots\subsetneq \mathcal E^k:=\ker \phi^k\subsetneq \mathcal E^{k+1}:=\mathcal E
\end{equation}
for some integer $k$ such that $\phi^{k+1}=0$. 

\end{Rem}

\

\begin{Def}\textbf{(Type of a nilpotent torsion-free Higgs pair on the nodal curve $X_0$)}
Let $(\mathcal F, \psi)$ be a torsion-free Higgs pair on $X_0$, where $\mathcal F$ is a torsion-free coherent sheaf on $X_0$ and $\psi: \mathcal F\longrightarrow \mathcal F\otimes \omega_{X_0}$ is a map of coherent sheaves. Suppose that the Higgs field is nilpotent, i.e. $\psi^n=0$, where $n$ denotes the rank of the torsion-free sheaf $\mathcal F$. Then, as mentioned above, we get a natural flag of saturated sub sheaves of $\mathcal F$.
\begin{equation}\label{fflag14}
    \mathcal F^0:=0\subsetneq \mathcal F^1:=\ker \psi\subsetneq \mathcal F^{2}:=\ker \psi^{2} \subsetneq \dots\subsetneq \mathcal F^k:=\ker \psi^k\subsetneq \mathcal F^{k+1}:=\mathcal F
\end{equation}
For every $1\leq i\leq k+1$, we define $n_i:=\rank ~\left(\frac{\mathcal F^{i}}{\mathcal F^{i-1}}\right)$ and $d_i:=\deg~ \left(\frac{\mathcal F^{i}}{\mathcal F^{i-1}}\right)$, where the degree of a torsion-free sheaf on an irreducible projective curve is defined to be its first Chern number. We say that the nilpotent Higgs bundle $(\mathcal F, \psi)$ is of type $\{(n_i, d_i)\}^{i=k+1}_{i=1}$. We denote it by $\Type~(\mathcal F, \psi)$. 
\end{Def}

\

\begin{Def}\textbf{(Type of a nilpotent Higgs bundle on a Gieseker curve $X_r$ for some $r\in [0,n]$)}\label{Type}
Let $r\in [0,n]$ be an integer. Let $\pi_r:X_r\longrightarrow X_0$ be the Gieseker curve with exactly $r$ many $\mathbb P^1$'s. Let $(\mathcal E, \phi)$ be a Gieseker-Higgs bundle on $X_r$. Then, by definition, $((\pi_r)_*\mathcal E, (\pi_r)_*\phi)$ is a nilpotent torsion-free Higgs pair on $X_0$. We define $\Type~(X_r, \mathcal E, \phi):=\Type~((\pi_r)_*\mathcal E, (\pi_r)_*\phi)$.
\end{Def}

\

\begin{Lem}\label{pushforward1122}
    Let $(\pi_r: X_r\rightarrow X_0, \mathcal E, \phi)$ be a nilpotent Gieseker-Higgs bundle. Then we have the following induced filtration as in \eqref{fflag}.
    \begin{equation}
        \mathcal E^0:=0\subsetneq \mathcal E^1:=\ker \phi\subsetneq \mathcal E^{2}:=\ker \phi^{2} \subsetneq \dots\subsetneq \mathcal E^k:=\ker \phi^k\subsetneq \mathcal E^{k+1}:=\mathcal E
    \end{equation}
    The induced nilpotent torsion-free Higgs pairs $(\mathcal F:=(\pi_r)_*\mathcal E, \psi:=(\pi_r)_*\phi)$ also has a natural filtration as in \eqref{fflag14}. 
    \begin{equation}
    \mathcal F^0:=0\subsetneq \mathcal F^1:=\ker  \psi\subsetneq \mathcal F^{2}:=\ker \psi^{2} \subsetneq \dots\subsetneq \mathcal F^k:=\ker  \psi^k\subsetneq \mathcal F^{k+1}:=\mathcal F
\end{equation}
Then $(\pi_r)_*\mathcal E^i\cong \mathcal F^i$ for every $1\leq i\leq k+1$.
\end{Lem}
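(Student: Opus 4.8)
The plan is to commute the pushforward $(\pi_s)_{*}$ past the operations ``take the kernel'' and ``take the $i$-th iterate of the Higgs field'', using only that $(\pi_s)_{*}$ is left exact; in particular no vanishing of higher direct images will be needed.

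First I would record the two structural facts about $\pi_s\colon \mathcal X_s\to X_0$ that make the argument run. (i) The morphism $\pi_s$ is proper, it is an isomorphism over $X_0\setminus\{x\}$, and it contracts the chain of $\mathbb P^{1}$'s to the node $x$; consequently $(\pi_s)_{*}$ is left exact on quasi-coherent sheaves. (ii) There is an isomorphism $\omega_{\mathcal X_s}\cong \pi_s^{*}\omega_{X_0}$. For (ii): each $\mathbb P^{1}$ in the chain meets the rest of $\mathcal X_s$ in exactly two points, so the restriction of $\omega_{\mathcal X_s}$ to such a component is $\omega_{\mathbb P^{1}}$ twisted by a divisor of degree $2$, hence has degree $0$ and is trivial; off the chain $\pi_s$ is an isomorphism and $\omega_{\mathcal X_s}$ agrees with $\pi_s^{*}\omega_{X_0}$ there (using the residue description of $\omega_{X_0}$ on the normalization), and a local check at the attaching points completes the identification. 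Granting (ii), the projection formula yields for every $i\geq 1$ a natural isomorphism $(\pi_s)_{*}(\mathcal E\otimes\omega_{\mathcal X_s}^{\otimes i})\cong \mathcal F\otimes\omega_{X_0}^{\otimes i}$, and, since $(\pi_s)_{*}$ is a functor and the projection isomorphism is natural in the first variable, under these identifications $(\pi_s)_{*}(\phi^{i})=\psi^{i}$: indeed $\psi:=(\pi_s)_{*}\phi$, and $\phi^{i}$ is the evident composite of the maps $\phi\otimes\mathrm{id}_{\omega_{\mathcal X_s}^{\otimes\bullet}}$, each of which pushes forward to $\psi\otimes\mathrm{id}_{\omega_{X_0}^{\otimes\bullet}}$ (formally, by an induction on $i$).

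With this in hand the lemma is immediate. Fix $1\leq i\leq k+1$ and apply the left exact functor $(\pi_s)_{*}$ to the exact sequence
\[
0\longrightarrow \mathcal E^{i}=\ker(\phi^{i})\longrightarrow \mathcal E\xrightarrow{\ \phi^{i}\ }\mathcal E\otimes\omega_{\mathcal X_s}^{\otimes i}.
\]
Left exactness identifies $(\pi_s)_{*}\mathcal E^{i}$ with $\ker\bigl((\pi_s)_{*}\phi^{i}\bigr)$, which by the previous paragraph is $\ker\bigl(\psi^{i}\colon \mathcal F\to\mathcal F\otimes\omega_{X_0}^{\otimes i}\bigr)=\mathcal F^{i}$. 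Moreover a left exact functor preserves injections, so the inclusions $\mathcal E^{i}\hookrightarrow\mathcal E^{i+1}$ push forward to the inclusions $\mathcal F^{i}\hookrightarrow\mathcal F^{i+1}$, and hence the whole filtration of $\mathcal E$ is carried to the whole filtration of $\mathcal F$. I expect the only step requiring genuine input to be item (ii), the identification $\omega_{\mathcal X_s}\cong\pi_s^{*}\omega_{X_0}$ (and, with it, the compatibility of $(\pi_s)_{*}$ with the iterated Higgs field); everything else is a formal consequence of left exactness of pushforward. One small point worth flagging is that $\mathcal E^{i}=\ker(\phi^{i})$ may fail to be locally free near the nodes of $\mathcal X_s$, but this is harmless here, since the argument only manipulates coherent sheaves and their kernels.
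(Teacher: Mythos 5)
Your proof is correct, but it follows a genuinely different route from the paper's. The paper argues directly with local sections in both directions: the inclusion $(\pi_s)_*\mathcal E^i\subseteq\mathcal F^i$ is obtained by restricting to the dense open set where $\pi_s$ is an isomorphism and invoking continuity, while the reverse inclusion takes a section $\sigma\in\mathcal F^i(U)$, shows $\phi^i(\sigma)$ vanishes on $(\pi_s)^{-1}(U)\cap\tilde X_0$, and then uses that $\mathcal E\otimes\omega_{X_s}^{\otimes i}$ is again a Gieseker bundle (via $\omega_{X_s}|_R\cong\mathcal O_R$) so that a section vanishing at the two attaching points $\tilde X_0\cap R$ must vanish on the whole rational chain. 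You instead package everything functorially: the identification $\omega_{X_s}\cong\pi_s^*\omega_{X_0}$ plus the projection formula give $(\pi_s)_*(\phi^i)=\psi^i$ up to canonical isomorphism of targets, and then left exactness of $(\pi_s)_*$ applied to $0\to\mathcal E^i\to\mathcal E\to\mathcal E\otimes\omega_{X_s}^{\otimes i}$ finishes in one line. Your version is cleaner and entirely sidesteps the most delicate step of the paper's argument (the propagation of vanishing along the chain, which implicitly leans on the strictly standard structure of $\mathcal E|_R$); the cost is that you must justify the canonical isomorphism $\pi_s^*\omega_{X_0}\xrightarrow{\sim}\omega_{X_s}$, which you correctly flag as the one piece of genuine geometric input — this is the standard fact that contracting rational bridges (components meeting their complement in exactly two points) preserves the dualizing sheaf under pullback, and your degree-zero-on-each-component argument together with the residue description at the attaching points is the right way to check it. The paper, by contrast, only uses the weaker restriction statement $\omega_{X_s}|_R\cong\mathcal O_R$ but pays for it with the extra section-level vanishing argument.
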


\begin{proof}
    For every $i\in [1,k+1]$, we have morphisms $\phi^i: \mathcal E\longrightarrow \mathcal E\otimes \omega^{\otimes i}_{X_r}$ and $\psi^i: \mathcal F\longrightarrow \mathcal F\otimes \omega^{\otimes i}_{X_0}$. Moreover, $\ker(\phi^i)=\mathcal E^i$ and $\ker(\psi^i)=\mathcal F^i$. Let $\sigma$ be a local section of $(\pi_r)_*\mathcal E^i$ on a neighbourhood $U$ of the node of $X_0$. It is an element of $\mathcal E^i((\pi_r)^{-1}(U))$. Therefore the section $\phi^i(\sigma)=0$ on the open set $(\pi_r)^{-1}(U)\cap \tilde{X_0}=U^o$, where $U^o$ denotes the complement of the node of $X_0$. Therefore $\psi^i(\sigma)=0$ on $U^o$, because $\psi=\phi$ on $U^o$. Since $U^o$ is dense in $U$, therefore $\psi^i(\sigma)=0$ on $U$ and $\sigma\in \mathcal F^i$. Therefore, $(\pi_r)_*\mathcal E^i\subseteq \mathcal F^i$ for all $i\in [1,k+1]$.

    For the converse, let $\sigma\in \mathcal F^i(U)$. Since $\mathcal F^i\subseteq \mathcal F=(\pi_r)_*\mathcal E$, we have $\sigma \in \mathcal E((\pi_r)^{-1}(U))$. Since $\psi^i(\sigma)=0$, therefore $\phi^i(\sigma)=0$ on  $(\pi_r)^{-1}(U^o)$ and hence, by continuity, on $(\pi_r)^{-1}(U)\cap \widetilde{X_0}$. Now notice that $\phi^i(\sigma)$ is a section of $\mathcal E\otimes \omega_{X_r}$. The bundle $\mathcal E\otimes \omega_{X_r}$ is a Gieseker-vector bundle because $\omega_{X_r}|_R\cong \mathcal O_R$, where $R$ the chain of $\mathbb P^1$'s in $X_r$. Therefore, it must vanish everywhere since the section vanishes at the two points $\widetilde{X_0}\cap R$. This implies that $\sigma\in \mathcal E^i((\pi_r)^{-1}(U))=((\pi_r)_*\mathcal E^i)(U)$. Therefore, $\pi_{r*}(\mathcal E^i)=\mathcal F^i$ for all $i\in [1,k+1]$.   
\end{proof}

\

\begin{Def}
We define $\mathcal Nilp^{sm, gen}_0$ to be the open substack of $\mathcal Nilp^{red}_0$ consisting of nilpotent Higgs bundles $(X_r, \mathcal E, \phi)$ which satisfy the following two conditions.
\begin{enumerate}
    \item $(X_r, \mathcal E, \phi)$ is a smooth point of $\mathcal Nilp^{red}_0$. 
    \item the $\Type(X_r, \mathcal E, \phi)$ is the same as the $\Type$ of the generic point of the component of the reduced nilpotent cone that $(X_r, \mathcal E, \phi)$ belongs to. 
\end{enumerate}
\end{Def}

\

Let $(\mathfrak X_{_{uni}}, \mathcal E_{_{uni}}, \phi_{_{uni}})$ denote the restriction of the universal modification, universal vector bundle and the universal Higgs field to $M^{cl}_{Gie}$. We restrict it to $\mathcal Nilp^{sm, gen}_0$. For simplicity of the notation, let us drop the subscript "univ" from the notation and simply denote it by $(\mathfrak X, \mathcal E, \phi)$.

\begin{Lem}\label{flat23}
 Consider the filtration \ref{fflag} of $\mathcal E$ induced by the Higgs field $\phi$. Then the sheaves $\mathcal E^i$ in the filtration are all flat over $\mathcal Nilp^{sm, gen}_0$. 
\end{Lem}

\begin{proof}
Consider any connected component $C$ of $\mathcal Nilp^{sm, gen}_0$ to see this. Then we have the following exact sequence for any element $c:\spec \mathbb k\longrightarrow C$.
$$
0\longrightarrow \textsf K_c^i\longrightarrow c^*\mathcal E\xrightarrow{(c^*\phi)^i} c^*\mathcal E\otimes \omega^{\otimes i}_{c^*\mathfrak X}\longrightarrow \textsf{CK}^i_c\longrightarrow 0
$$
Here $\textsf K_c^i$ and $\textsf{CK}^i_c$ denote the kernel and cokernel of the map $(c^*\phi)^i$, respectively. We also have the following exact sequence.
$$
0\longrightarrow \underline{\textsf{K}}^i_c\longrightarrow (\pi_r)_*(c^*\mathcal E)\xrightarrow{(\pi_r)_*((c^*\phi)^i)=((\pi_r)_*(c^*\phi))^i} (\pi_r)_*(c^*\mathcal E\otimes \omega^{\otimes i}_{c^*\mathfrak X})\longrightarrow \underline{\textsf{CK}}^i_c\longrightarrow 0
$$
Here $\underline{\textsf{K}}^i_c$ and $\underline{\textsf{CK}}^i_c$ are kernel and cokernel, respectively. Notice from Lemma \ref{pushforward1122} it follows that $\underline{\textsf{K}}^i_c=(\pi_s)_*(\textsf{K}^i_c)$. Since the $\Type~~(c^*{\mathfrak X}, c^*\mathcal E, c^*\phi)$ is constant over $C$, therefore the Hilbert polynomial of $\underline{\textsf{K}}^i_c$ does not depend on $c\in C$ which in turn implies that the Hilbert polynomial of $\underline{\textsf{CK}}^i_c$ does not depend on $c\in C$. Consider next the sheaf cokernel $\underline{\textsf{CK}}^i:=Coker((\pi_r)_*((\phi)^i)$. Since $\underline{\textsf{CK}}^i$ is a cokernel, we have that $c^*\underline{\textsf{CK}}^i=\underline{\textsf{CK}}^i_c$. Therefore, the sheaf $\underline{\textsf{CK}}^i$ is flat over $C$. Now consider the kernel sheaf $\underline{\textsf{K}}^i:=Ker((\pi_r)_*((\phi)^i))$. Since in the four terms sequence 
\[
0\longrightarrow \underline{\textsf{K}}^i \longrightarrow (\pi_r)_*(\mathcal E)\xrightarrow{(\pi_r)_*((\phi)^i)=((\pi_r)_*(\phi))^i} (\pi_r)_*(\mathcal E\otimes \omega^{\otimes i}_{\mathfrak X})\longrightarrow \underline{\textsf{CK}}^i\longrightarrow 0
\]
the last three terms are flat over $C$, we conclude that $\underline{\textsf{K}}^i$ will also be flat over $C$, and therefore $\textsf{K}^i$ is flat over $C$.
\end{proof}

\

\begin{Prop}\label{DefCom}
The tangent complex of $\mathcal Nilp^{red}_{0}$ at a point $(X_r,  E, \phi)\in \mathcal Nilp^{sm, gen}_0$ is given by $R\Gamma(\mathcal {SC}{( E, \phi)})$, where $\mathcal {SC}{(E, \phi)}$ is the following complex of sheaves on $X_r$.
\begin{equation}\label{above}
\mathcal {SC}{(E, \phi)}:=[SC(E,\phi)\xrightarrow{[-, \phi]} SC(E,\phi)\otimes \omega_{X_r}]
\end{equation}
where $SC(E, \phi)\subseteq \End~E$ is the sheaf of local sections $s$ of $\End~E$ such that $s(E^i)\subseteq E^{i-1}$ for $i=1,\dots, k+1$.
\end{Prop}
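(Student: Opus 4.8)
The approach is to compute the tangent complex by first-order deformation theory, exploiting two features of the setup: a point of $\mathcal Nilp^{sm,gen}_0$ is by definition a smooth point of the reduced stack $\mathcal Nilp^{red}_0$, so its tangent complex is recovered from first-order deformations with no hidden obstruction correction; and, by Lemma \ref{flat23}, the kernel filtration $\mathcal E^\bullet$ extends flatly over any deformation staying inside $\mathcal Nilp^{red}_0$, so it may be carried along in the computation, with its behaviour under push-forward to $X_0$ controlled by Lemma \ref{pushforward1122}. Throughout one works at a point where the bubbled curve $X_s$ is held fixed: the chain of $\mathbb P^1$'s contributes no tangent directions to this stratum, since $\omega_{X_s}$ is trivial along it and, by Proposition \ref{rellog101}, $\mathfrak M\to S$ is log-trivial, so the modification is irrelevant for the local computation.

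First I would recall from Proposition \ref{qs11} that the tangent complex of the ambient stack $M^{cl}_{Gie}$ at $(X_s,\mathcal E,\phi)$ is $R\Gamma(\mathcal C(\mathcal E,\phi))[1]$ with $\mathcal C(\mathcal E,\phi)=[\mathcal End\,\mathcal E\xrightarrow{[-,\phi]}\mathcal End\,\mathcal E\otimes\omega_{X_s}]$, so it suffices to identify the subobject of this complex cut out by the requirement that the deformation stay inside $\mathcal Nilp^{red}_0$, i.e. that $\phi$ remain nilpotent of the prescribed $\Type$. Next, given a first-order deformation $(\mathcal E_\epsilon,\phi_\epsilon)$ over $\mathbb k[\epsilon]$ lying in $\mathcal Nilp^{red}_0$, Lemma \ref{flat23} shows that $\mathcal E^i_\epsilon:=\ker\phi^i_\epsilon$ is a flat family of subbundles deforming $\mathcal E^\bullet$, so after a gauge transformation one may choose local frames of $\mathcal E_\epsilon$ adapted to this filtration. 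In such frames the \v{C}ech (or Dolbeault) cocycle $(a,b)$ representing the deformation has $a$ valued in the sheaf of endomorphisms compatible with $\mathcal E^\bullet$ and $b$ valued in $SC(\mathcal E,\phi)\otimes\omega_{X_s}$ (because $\phi_\epsilon$ still strictly lowers the filtration), and $[-,\phi]$ carries filtration-compatible endomorphisms into $SC(\mathcal E,\phi)\otimes\omega_{X_s}$; using Lemma \ref{pushforward1122}, constancy of $\Type$ is equivalent to the induced maps on the associated graded pieces remaining saturated injections, which is what forces the degree-zero part of the cocycle down into $SC(\mathcal E,\phi)$, so that $(a,b)$ takes values precisely in the two-term complex $\mathcal{SC}(\mathcal E,\phi)=[SC(\mathcal E,\phi)\xrightarrow{[-,\phi]}SC(\mathcal E,\phi)\otimes\omega_{X_s}]\hookrightarrow\mathcal C(\mathcal E,\phi)$. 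Conversely, any hypercohomology class of $\mathcal{SC}(\mathcal E,\phi)$ integrates, via the same frame-adapted construction run in reverse, to a first-order deformation keeping $\phi$ nilpotent of the prescribed $\Type$. Combining the two directions identifies the Zariski tangent space of $\mathcal Nilp^{red}_0$ with $\mathbb H^1(\mathcal{SC}(\mathcal E,\phi))$ and, tracking the automorphism and obstruction parts, identifies the full tangent complex with $R\Gamma(\mathcal{SC}(\mathcal E,\phi))$ in the normalization of the statement; smoothness of the point is exactly what guarantees there is no higher deformation datum beyond the two-term complex, so that $R\Gamma(\mathcal{SC}(\mathcal E,\phi))$ is genuinely the tangent complex rather than merely the first approximation to a deformation complex.

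The hard part will be the middle step: pinning down precisely which sheaf of endomorphisms governs the bundle directions, and showing that the filtration-compatible part of $a$ acting nontrivially on the associated graded produces no new tangent direction to $\mathcal Nilp^{red}_0$ but is absorbed, through $[-,\phi]$ together with a further change of frame, into a deformation of $\phi$. This is where the genericity and smoothness hypotheses defining $\mathcal Nilp^{sm,gen}_0$ are indispensable, and where the bookkeeping of the filtration and its graded quotients under push-forward to $X_0$, as set up in Lemmas \ref{pushforward1122} and \ref{flat23}, must be carried out carefully; once that identification is in place, the remainder is a routine hypercohomology computation.
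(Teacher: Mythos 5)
Your proposal follows essentially the same route as the paper's proof: both compute the tangent complex via first-order (\v{C}ech) deformations of the pair $(\mathcal E,\phi)$, use Lemma \ref{flat23} to deform the kernel filtration $\mathcal E^{\bullet}$ flatly and work in adapted frames, and conclude that the resulting cocycle lands in the subcomplex $\mathcal{SC}(\mathcal E,\phi)\subseteq\mathcal C(\mathcal E,\phi)$. The only difference is one of emphasis: the step you single out as the hard part --- forcing the degree-zero component from the filtration-preserving endomorphisms down into the strictly-lowering sheaf $SC(\mathcal E,\phi)$ --- is exactly the step the paper dispatches with ``it is straightforward to check.''
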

\begin{proof}
We choose a trivialisation $X_r=\cup_{i\in I} V_i$ of the vector bundle $\mathcal E$ and the line bundle $\omega_{X_r}$. Then a first-order infinitesimal deformation (as a Higgs bundle) of $( E, \phi)$ can be described as a pair $(s_{ij}, t_i)$, where $s_{ij}\in \Gamma(V_{ij}, \End E)$ and $t_i\in \Gamma(V_i, \End E\otimes \omega_{X_r})$ satisfying $s_{ij}+s_{jk}=s_{ik}$ and $t_i-t_j=[s_{ij}, \phi]$ (see \cite[Theorem 2.3]{BR} and \cite[Proposition 3.1.2]{Bo}).

Let $\Spec (\mathbb k[\epsilon]) \longrightarrow \mathcal Nilp^{sm, gen}_0$ be a map such that the image of the closed point is given by the Higgs bundle $(X_r, E, \phi)$. Let us denote by the corresponding first-order infinitesimal deformation of the nilpotent Higgs bundle $(E, \phi)$ by $(E[\epsilon], \phi[\epsilon])$. We assume that $(E[\epsilon], \phi[\epsilon])$ is a nilpotent Higgs bundle. We define $ E^i[\epsilon]:=\ker~(\phi[\epsilon])^i$ for $i\in [1, k+1]$. Since the induced flag $E^{\bullet}$ is flat over $\mathcal Nilp^{sm, gen}_0$ (see Lemma \ref{flat23}), therefore $ E^{\bullet}[\epsilon]$ is flat over $\Spec(\mathbb k[\epsilon])$. Since $E^{i}[\epsilon]$ is flat over $\Spec(\mathbb k[\epsilon])$, therefore it is an extension of $E^i$ by $E^i$.  

\begin{equation}
    \begin{tikzcd}
        0\rar & E \rar & E[\epsilon] \rar & E \rar & 0\\
        0\rar & E^i \rar \arrow[hook]{u} & E^i[\epsilon] \rar \arrow[hook]{u} & E^i \rar \arrow[hook]{u} & 0
    \end{tikzcd}
\end{equation}
\vspace{.6em}

Now it is straightforward to check that for $(s_{ij}, t_i)$ to be an infinitesimal deformation of $(E, \phi)$ as a nilpotent Higgs field, it has to satisfy the extra condition that  $s_{ij}(E^{\bullet})\subseteq E^{\bullet-1}$ and $t_{i}(E^{\bullet})\subseteq E^{\bullet-1}\otimes \omega_{X_r}$, where $ E^{\bullet}$ is the flag \eqref{fflag}. This means that $s_{ij}\in \Gamma(V_{ij}, SC(E, \phi))$ for all $i,j$ and therefore, the proposition follows.
\end{proof}

\

\begin{Thm}\label{isotropic112}
\begin{enumerate}
\item The Hitchin map $h: M^{cl}_{Gie}\longrightarrow B$ is surjective.
\item The sub-stack $\mathcal Nilp^{sm, gen}$ is relatively isotropic in $M^{cl}_{Gie}$ with respect to the relative 0-shifted log symplectic form defined in Theorem \ref{main112}.
\item The Hitchin map $h: M^{cl}_{Gie}\longrightarrow B$ is flat.
\end{enumerate}
\end{Thm}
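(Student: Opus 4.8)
\emph{Overall plan.} The three parts are proved in sequence, each feeding the next: surjectivity guarantees the Hitchin fibres are non-empty, isotropy bounds the dimension of the zero fibre, and these together with the dimension count of the Appendix (Theorem~\ref{dim111}) yield flatness by a fibrewise‑dimension criterion. \emph{For part (1)} the plan is to combine generic surjectivity with completeness. Over the generic point $\eta\in S$ the map $h$ is the usual Hitchin map of the smooth curve $\mathcal X_\eta$, and this is surjective: given $a=(a_1,\dots,a_n)$ in the Hitchin base, form the spectral curve $C_a\subset\Tot(\omega_{\mathcal X_\eta})$ cut out by $\lambda^n+a_1\lambda^{n-1}+\dots+a_n$, which is finite flat of degree $n$ over $\mathcal X_\eta$; then $(\pi_*\mathcal O_{C_a},\ \lambda\cdot(-))$ is a Higgs bundle with characteristic polynomial $a$. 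Hence the image of $h$ contains the schematically dense open $B_\eta\subset B$. Since $h$ is complete (Theorem~\ref{complete11}) and of finite type it is universally closed, so its image is closed; being closed and dense it is all of $B$. Equivalently: for $b_0\in B$ over $\eta_0$ pick a trait $\Spec A\to B$ through $b_0$ whose generic point lands in $B_\eta$, lift that generic point to $M^{cl}_{Gie}$ by generic surjectivity, and extend over the closed point by Theorem~\ref{complete11}.

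\emph{For part (2)} the plan is to show the symplectic $2$-form restricts to zero on $\mathcal Nilp^{sm,gen}$ and then upgrade this to an isotropic structure. By Proposition~\ref{DefCom}, at a point $(X_s,\mathcal E,\phi)\in\mathcal Nilp^{sm,gen}$ the tangent complex of $\mathcal Nilp^{red}$ is $R\Gamma(\mathcal{SC}(\mathcal E,\phi))$, and the canonical map to $\mathbb T_{M/S}=Rr_*\mathcal C(E,\phi)[1]$ is the direct image of the inclusion of two–term complexes $\mathcal{SC}(\mathcal E,\phi)\hookrightarrow\mathcal C(E,\phi)$ induced by $SC(\mathcal E,\phi)\hookrightarrow\mathcal{E}nd\,\mathcal E$. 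By the explicit description in the proof of Theorem~\ref{main112}, the relative $0$-shifted log-symplectic pairing on $M$ is induced by the trace pairing $\mathcal{E}nd\,\mathcal E\otimes\mathcal{E}nd\,\mathcal E\to\mathcal O_{X_s}$ composed with the $\mathcal O$-orientation. The key observation is that this vanishes on $SC(\mathcal E,\phi)$: a section $s$ of $SC(\mathcal E,\phi)$ sends $\mathcal E^i$ into $\mathcal E^{i-1}$, so for two such sections $s,s'$ the endomorphism $ss'$ sends $\mathcal E^i$ into $\mathcal E^{i-2}$; in a local frame adapted to the flag $\mathcal E^\bullet$ at a generic point of $X_s$ (where $\mathcal E$ and the $\mathcal E^i$ are locally free) $ss'$ is strictly block–triangular with vanishing diagonal blocks, whence $\mathrm{Tr}(ss')=0$, and since $\mathcal O_{X_s}$ is reduced this forces $\mathrm{Tr}|_{SC\cdot SC}=0$ everywhere. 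Thus the pullback of the symplectic $2$-form along $\mathcal Nilp^{sm,gen}\hookrightarrow M$ vanishes at the level of $2$-forms. To get an isotropic structure I would use that the closed form underlying the log-symplectic structure of Theorem~\ref{main112} is produced by the mapping-stack construction from the strict, constant trace form on $BGL_n$ and the $\mathcal O$-orientation, hence is represented by a strict closed form whose restriction to $\mathcal Nilp^{sm,gen}$ is literally $0$; the constant path is then the isotropic structure, manifestly relative over $S$.

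\emph{For part (3)} the plan is to apply the fibrewise dimension criterion for flatness. Here $B$ is regular (a vector bundle over the regular one-dimensional $S$) and $M^{cl}_{Gie}$ is Cohen–Macaulay since it is a local complete intersection by Theorem~\ref{dim111} and the Appendix, so it suffices to check that every fibre of $h$ has dimension $\dim M^{cl}_{Gie}-\dim B$. By part~(1) every fibre is non-empty, hence of dimension $\geq\dim M^{cl}_{Gie}-\dim B$ by the dimension theorem. For the reverse inequality on the zero fibre $\mathcal Nilp=h^{-1}(0)$: every irreducible component of $\mathcal Nilp^{red}$ meets $\mathcal Nilp^{sm,gen}$ by construction, which by part~(2) is relatively isotropic, so $\dim\mathcal Nilp\leq\tfrac12\dim M^{cl}_{Gie}$ on each $S$-fibre; comparing with Theorem~\ref{dim111} (after, if necessary, peeling off the flat torsor-type factor $\mathrm{Tr}\,\phi\colon M^{cl}_{Gie}\to R^0\pi_*\omega_{\mathcal X/S}$ and reducing to the traceless Hitchin map) this forces $\dim\mathcal Nilp=\dim M^{cl}_{Gie}-\dim B$. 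Finally one propagates this to every fibre using the $\mathbb G_m$-action $(E,\phi)\mapsto(E,t\phi)$: the map $h$ is equivariant for the weight action of $\mathbb G_m$ on $B$, and by Theorem~\ref{complete11} the limit $\lim_{t\to 0}(E,t\phi)$ exists in $M^{cl}_{Gie}$ and lies over $0\in B$, so the closed $\mathbb G_m$-invariant locus $\{b:\dim h^{-1}(b)\geq k\}$ contains $0$ whenever non-empty; hence $\dim h^{-1}(b)\leq\dim\mathcal Nilp$ for all $b$. All fibres of $h$ are then equidimensional of dimension $\dim M^{cl}_{Gie}-\dim B$, and $h$ is flat.

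\emph{Main obstacle.} I expect the genuinely delicate step to be the second half of part~(2): the cochain-level vanishing of the trace pairing on $SC(\mathcal E,\phi)$ is immediate, but upgrading it to an actual isotropic structure — a coherent null-homotopy of the \emph{closed} relative log $2$-form in $\mathcal A^{2,cl}_{log}(\mathcal Nilp^{sm,gen}/S,0)$, compatible with the mapping-stack description of the symplectic form of Theorem~\ref{main112} and with the locally free log structure — needs a careful treatment of the graded-mixed de Rham model and the orientation. A secondary point is the bookkeeping in part~(3): one must reconcile the half-dimension bound coming from isotropy with the precise value $\dim M^{cl}_{Gie}-\dim B$ predicted by Theorem~\ref{dim111}, which is why the trace factor should be separated off first.
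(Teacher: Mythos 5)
Your proposal is correct and, for parts (2) and (3), follows essentially the same route as the paper: the heart of (2) in both cases is the observation that the trace pairing vanishes on the nilpotent radical $SC(\mathcal E,\phi)$ of the parabolic determined by the kernel flag (the paper carries this out explicitly in a \v{C}ech model, checking the induced maps on $H^{-1}$, $H^{0}$, $H^{1}$ of $\omega^{\flat}$ and handling the nodes by restricting to the dense smooth locus and arguing by continuity, exactly as you do), and (3) in both cases combines the resulting dimension bound on $\mathcal Nilp^{red}$ with the local complete intersection property and dimension count of Theorem \ref{dim111}, concluding by miraculous flatness. There are two genuine differences. For (1) the paper does not use completeness at all: it runs the spectral correspondence directly on the nodal curve $X_0$, observing that for the monic polynomial $s(a_{\bullet})$ the spectral curve $V(s(a_{\bullet}))$ is already closed in $\mathbb P(\omega_{X_0}^{*}\oplus\mathcal O_{X_0})$ and misses the divisor at infinity, so a rank-one locally free sheaf on it pushes forward to a Higgs bundle on $X_0$ with the prescribed characteristic polynomial; your alternative (generic surjectivity over $\eta$, plus Theorem \ref{complete11} giving universal closedness and hence a closed dense image) is also valid and arguably cleaner, at the cost of invoking the valuative criterion, whereas the paper's argument is fibrewise and more explicit. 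For (3) you supply the $\mathbb G_m$-contraction argument showing every Hitchin fibre has dimension bounded by that of the nilpotent cone; the paper simply asserts that all fibres have dimension $n^2(g-1)$ without spelling this step out, so your version fills in a point the paper leaves implicit. Finally, the worry you flag about upgrading the cochain-level vanishing to a coherent null-homotopy of the closed $2$-form is legitimate, but the paper does not address it either: it is content to show that the underlying $2$-form pairing restricted to $\mathcal Nilp^{sm,gen}$ is quasi-isomorphic to zero.
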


\begin{proof}

\text{\underline{Proof of (1):}} Let $a_{\bullet}:=(a_1,\dots, a_n)\in B$ be any point. This point $a_{\bullet}$ can either lie over the closed fibre of the map $B\to S$ or outside the closed fibre. If it lies outside the closed fibre, then it lies in the image of the Hitchin map. This fact follows from the spectral correspondence (\cite[Theorem 6.11]{32.1}). If $a_{\bullet}$ lies over the closed fibre, we again use a version of spectral correspondence for the Hitchin map for nodal curves \cite[Lemma 2.4]{2a}. Let us consider the function $s(a_{\bullet}):=t^n+a_1t^{n-1}+\dots+a_{n-1}t+a_n$ on the total space $\Tot(\omega_{X_0})$. Here $t$ denotes the canonical section of $f^*\omega_{X_0}$, where $f: \Tot(\omega_{X_0})\longrightarrow X_0$ is the projection map. Then the vanishing locus $V(s(a_{\bullet}))$ defines a closed sub-scheme of $\Tot(\omega_{X_0})$. Notice that $\Tot(\omega_{X_0})$ is only quasi-projective and it is an open subscheme of $Z:=\mathbb P(\omega_{X_0}^*\oplus \mathcal O_{X_0})$. But since $s(a_{\bullet})$ is a monic polynomial, therefore the closure of $V(s(a_{\bullet}))$ in $Z$ is $V(s(a_{\bullet}))$ itself. In particular, $V(s(a_{\bullet}))$ is a closed subscheme in $Z$ such that $V(s(a_{\bullet}))\cap D_{\infty}=\emptyset$, where $D_{\infty}:=Z\setminus \Tot(\omega_{X_0})$. Therefore, by spectral correspondence, it follows that any rank $1$ locally free sheaf on $V(s(a_{\bullet}))$ corresponds to a Higgs bundle $(E, \phi)$ on the nodal curve $X_0$ whose characteristic polynomial is given by $a_{\bullet}:=(a_1,\dots, a_n)\in B$. Therefore, the Hitchin map is surjective.

\vspace{1em}

\text{\underline{Proof of (2):}} We want to show that $\mathcal Nilp^{sm,gen}$ is relatively isotropic over $S$. First, we notice that the smooth locus of the generic fibre of $\mathcal Nilp^{red}\longrightarrow S$ is isotropic, and its dimension is equal to $n^2(g-1)$, which is the same as the dimension of the stack of rank $n$ vector bundles on a smooth projective curve of genus $g$. Therefore, we have $ \dim \mathcal Nilp^{red}_0\geq n^2(g-1)$. We want to show that the $\mathcal Nilp^{red,sm}_0$ is an isotropic substack in $M^{cl}_{Gie,0}$. The tangent complex at a point $(X_r, E, \phi)$ of $\mathcal Nilp^{red, sm}_0$ is given by the derived global sections of \eqref{above}. The tangent complex at the corresponding point of $M^{cl}_{Gie}$ is given by the derived global sections of the following complex (see \eqref{TC113} and Theorem \ref{TC112}). 

\begin{equation}
\mathcal {C}{(E, \phi)}:=[\End E\xrightarrow{[-, \phi]} \End E\otimes \omega_{X_r}]
\end{equation}

We want to show that the following composite morphism is homotopic to $0$. 
\begin{small}
\begin{equation}\label{isotropic11}
\begin{tikzcd}
    R\Gamma(\mathcal {SC}(E, \phi))[1]\arrow{r} \arrow[bend right=20]{rrr} & R\Gamma(\mathcal {C}(E, \phi))[1]\arrow{r}{\omega^{\flat}} & (R\Gamma(\mathcal {C}( E, \phi))[1])^{\vee}\arrow{r} & (R\Gamma(\mathcal {SC}( E, \phi))[1])^{\vee}
\end{tikzcd} 
\end{equation}
\end{small}
Notice that the morphism $\omega^{\flat}: R\Gamma(\mathcal {C}(E, \phi))[1]\to (R\Gamma(\mathcal {C}(E, \phi))[1])^{\vee}$ is given by the logarithmic-symplectic pairing (Theorem \ref{main112}). If we show that the composite morphism \eqref{isotropic11} is homotopic to $0$, this will imply that $\mathcal Nilp^{red, sm}_0$ is an isotropic substack. To show this, we choose a cover $X_r=\underset{i\in I}\cup V_i$ for which the vector bundle $E$ and the line bundle $\omega_{X_r}$ trivialize. With respect to this cover, the complex $R\Gamma(\mathcal {SC}(E, \phi))[1]$ is quasi-isomorphic to the following \v{C}ech complex
\begin{equation}
 \resizebox{1.00\hsize}{!}{$\prod SC( E,\phi)(V_i)\longrightarrow \prod SC( E,\phi)(V_{ij})\oplus \prod(SC(E,\phi)\otimes \omega_{X_r} )({V_i}) \longrightarrow \prod(SC( E,\phi)\otimes \omega_{X_r} )(V_{ij})\oplus \prod SC(E,\phi)(V_{ijk})$,}
\end{equation}
where the term $\prod SC(E,\phi)({V_i})$ is in degree $-1$. 

Notice that the composite map can only be non-zero on three cohomologies: in the degree $-1$, $0$ and $1$. 

\

\underline{Case 1: the induced map $H^0(\omega^{\flat})$:} An element of \[\prod SC(E,\phi)(V_{ij})\oplus \prod(SC(E,\phi)\otimes \omega_{X_r} )({V_i})\] is given by $\{(s_{ij}, t_i)\}_{i,j\in I}$, where $(s_{ij}, t_i)$, where $s_{ij}\in \Gamma(V_{ij}, \End E)$ and $t_i\in \Gamma(V_i, \End E\otimes \omega_{X_r})$ such that $s_{ij}(E^{\bullet})\subseteq E^{\bullet-1}$, and $t_{i}(E^{\bullet})\subseteq E^{\bullet-1}\otimes \omega_{X_r}$, where $E^{\bullet}$ is the flag \eqref{fflag}. Then from \eqref{isotropic11} it follows that the pairing of two such elements $\{(s_{ij}, t_i)\}$ and $\{(s'_{ij}, t'_i)\}$ is given by \[\{T_{ij}:=Trace(s_{ij}\circ t'_j-t_i\circ s'_{ij})\in \omega_{X_r}(V_{ij})\}_{i,j\in I}.\] 

\

\underline{Claim:} The section $T_{ij}$ vanishes for all $i,j\in I$.

\

\underline{Proof of the claim:} We first observe that, we can choose the trivialisation $X_r=\cup_{i\in I} V_i$ of $E$ and $\omega_{X_r}$ in such a way that 
\begin{enumerate}
\item $V_i$ contains at most one node for each $i\in I$, 
\item $V_i$'s are connected.
\end{enumerate}

We notice that $V_{ij}$'s also contain at most one node. Suppose, $V_{ij}$ contains a node $p$. Then $V_{ij}=V^1_{ij}\coprod V^2_{ij}$, the union of two smooth irreducible components of $V_{ij}$. We denote by $V^o_{ij}:=V_{ij}\setminus p$, $V^{1,o}_{ij}:=V^1_{ij}\setminus p$ and $V^{2,o}_{ij}:=V^2_{ij}\setminus p$. We consider the restriction of the section $T_{ij}$ to the two open subsets $V^{1,o}_{ij}$ and $V^{2,o}_{ij}$. Notice that the restrictions of the flag \eqref{fflag} to these two open subsets are all sub-bundles. We notice that $SC(E|_{V^{1,o}_{ij}},\phi)$ is the nilpotent part of the parabolic sub-algebra of $\End (E|_{V^{1,o}_{ij}})$ given by the natural flag of sub-bundles induced by $\phi$. Therefore it is clear that the trace pairing is $0$ i.e., $(T_{ij})|_{V^{1,o}_{ij}}=0$. Similarly, $(T_{ij})|_{V^{2,o}_{ij}}=0$. Since $V^o_{ij}$ is a dense open subset of $V_{ij}$, therefore, by continuity, $T_{ij}=0$. 

If a $V_{ij}$ does not contain any node, then the proof is similar. 

\

\underline{Case 2: the induced map $H^{-1}(\omega^{\flat})$:} In this case, we look at the pairing of $\{t_i\in SC(E, \phi)\otimes \omega_{X_r}\}$ and $\{s_{ij}\in SC(E, \phi)\}$. The pairing is given by $\{\tr(t_i\circ s_{ij})\}$. By similar arguments as above, we can show that $\tr(t_i\circ s_{ij})=0~~\forall i, j\in I$.

\

\underline{Case 3: the induced map $H^1(\omega^{\flat})$:} In this case, we look at the pairing of $\{t_{ij}\in (SC(E, \phi)\otimes \omega_{X_r})(V_{ij})\}$ and $\{s_j\in SC(E, \phi)(V_j)\}$. The pairing is given by $\{\tr(t_{ij}\circ s_j)\}$. By similar arguments as above, we can show that $\tr(t_{ij}\circ s_j)=0~~\forall i,j\in I$.

\

Therefore, we conclude that the smooth locus of this particular component is isotropic.

\

\text{\underline{Proof of (3)}} From (2), it follows that $ \dim \mathcal Nilp^{red}_0= n^2(g-1)$. Now from Lemma \ref{dim111}, it follows that the dimension of $M^{cl}_{Gie}$ is equal to $2n^2(g-1)+2$. Once we know the dimension, it is easy to see that $M^{cl}_{Gie}$ is a local complete intersection (Corollary \ref{LCI}). Since $M^{cl}_{Gie}$ is a local complete intersection, and all the fibres of the Hitchin map $h: M^{cl}_{Gie}\longrightarrow B$ are of dimension equal to $n^2(g-1)$, therefore by the miraculous flatness criterion \cite[Exercise 10.9, page 276]{Ha}, it follows that the Hitchin map is flat.
\end{proof}

\

\section{\textbf{On the relative logarithmic Dolbeault moduli over \texorpdfstring{$\overline {\mathcal M_g}$}{Mgbar}}}
In this section, we will construct the derived moduli stack $M_{g}^{Dol}$ of Higgs bundles with coefficients in the relative dualizing sheaf along the fibers of the universal curve over the moduli stack $\mathcal{M}_{g}^{ss}$ of semistable curves of genus $g\geq 2$.  We will show that there is a relative $0$-shifted symplectic form on $M_{g}^{Dol}/\mathcal{M}_{g}^{ss}$ which can also be interpreted as a relative $0$-shifted log-symplectic form on $M_{g}^{Dol}$, relative to the moduli stack of stable curves $\overline{\mathcal{M}_{g}}$. Let us first recall some notations.

\

\noindent
A connected projective variety $C$ of dim $1$ is called a \emph{\bfseries stable curve} if it is either smooth or has nodal singularities, and the automorphism group $\Aut (C)$ is finite. 

The \emph{\bfseries moduli stack of stable curves} represents the moduli problem
$$
\overline{\mathcal M_g}: (Sch/\mathbb k)^{op} \longrightarrow Groupoids 
$$ 
\begin{equation}
T
\mapsto \left\{
\begin{array}{@{}ll@{}}
\text{families of stable curves}\\
\text{ of genus} ~g~\text{over}~T
\end{array}\right\}
\end{equation}

\vspace{1em}

Here, morphisms in the groupoid $\overline{\mathcal M_g}(T)$ are isomorphisms of stable curves over $T$.

\ 

\begin{Rem}\label{Rem:Moch} It is well known that if $g\geq 2$, then $\overline{\mathcal M_g}$ is a smooth and proper Deligne-Mumford stack. The locus of nodal curves forms a normal crossing divisor in $\overline{\mathcal M_g}$ and so gives a divisorial log structure on $\overline{\mathcal M_g}$. On the other hand, by a well known result of S. Mochizuki \cite[\S 3B.]{M}, given any family of semi-stable curves $\mathcal C\to T$, there is a natural log structure on $\mathcal C$ and also on the base scheme $T$ such that the projection map is a log-smooth map and the relative logarithmic cotangent bundle is the relative dualizing sheaf of $\mathcal C/T$. We call these log structures the \emph{\bfseries basic log structure package} on $\mathcal C/T$. It is known that the log structure on $\overline{\mathcal M_g}$ which is part of the basic log structure package for the universal curve $\mathcal C_g$ over $\overline{\mathcal M_g}$ agrees with the divisorial log-structure on $\overline{\mathcal M_g}$ (see the comment after Lemma 4.4 in \cite{17} and the proof of our Proposition \ref{logstruc111}).
\end{Rem}

\

\noindent
A connected projective variety $C$ of dim $1$ is called a \emph{\bfseries semi-stable curve} if the following properties are satisfied
\begin{enumerate} 
\item it is either smooth or has nodal singularities, 
\item every rational irreducible component $R$ is smooth, and 
\item $R\cdot \overline{(C\setminus R)}=2$.
\end{enumerate}

\medskip

\noindent
The \emph{\bfseries moduli stack of semi-stable curves} represents the moduli problem
$$
{\mathcal M^{ss}_g}: Sch/\mathbb k\longrightarrow Groupoids $$ 
\begin{equation}
T
\mapsto \left\{
\begin{array}{@{}ll@{}}
\text{families of semi-stable curves}\\
\text{of genus} ~g~\text{over}~T
\end{array}\right\}
\end{equation}

\vspace{1em}

Morphisms in the groupoid ${\mathcal M^{ss}_g}(T)$ are again isomorphisms of families of semi-stable curves over $T$.

\medskip

\begin{Rem} It is well known that if $g\geq 2$,  ${\mathcal M^{ss}_g}$ is a smooth Artin stack \cite[0E72, Lemma 21.5]{34} of finite type.
\end{Rem}

\subsection{\textbf{Log structures on \texorpdfstring{${\mathcal M^{ss}_g}$}{Mssg}}} The locus of singular curves in ${\mathcal M^{ss}_g}$ again forms a normal crossing divisor, and hence induces a log structure on ${\mathcal M^{ss}_g}$. We denote this divisor by $\partial \mathcal M^{ss}_g$. Let us denote the universal curve over ${\mathcal M^{ss}_g}$ by $\mathcal D_g$. There is the ``stabilization morphism" $\pi: \mathcal M^{ss}_g\longrightarrow \overline{\mathcal M_g}$ such that the induced morphism $\mathcal D_g\longrightarrow \pi^*\mathcal C_g$ is the universal modification morphism over $\mathcal M^{ss}_g$. The existence of the universal modification follows from a well-known result called the stable reduction theorem \cite[Lemma 109.23.4.]{34}. The existence of the stabilisation morphism follows from the universal property of $\overline{\mathcal M_g}$ and the universal modification morphism. 

\subsubsection{\textbf{Versal deformations of the stabalization morphism}}\label{VerPic}

Let $\mathcal X$ be a stable curve of genus $g$ over $\spec \mathbb k$ and $\mathfrak X$ be a semi-stable curve whose stable model is $\mathcal X$. Let $\{c_i\}^l_{i=1}$ be the nodes of the curve $\mathcal X$. Let $\{d_{ij}\}^{\iota_i}_{i=1}$ be the nodes of $\mathfrak X$ over the node $c_i$ for every $i=1, \dots, l$. From \cite[Proposition 3.3.2]{29}, it follows that 
\begin{enumerate}
\item there exists a versal deformation space of the nodal curve $\mathcal X$ which is isomorphic to $\spec \mathbb k[|z_1, \dots, z_l|][|z_{l+1}, \dots, z_N|]$, where $N:=3g-3$ (here $g$ is the genus of the curves) and $z_i$ is the equation of the $i$-th node of $\mathcal X$ for $i=1, \dots, l$, and 
\item there exits a versal deformation space of the nodal curve $\mathfrak X$ which is isomorphic to $\spec \mathbb k[|\{\{z_{ij}\}^{l}_{i=1}\}^{\iota_i}_{j=1}|][|z_{l+1}, \dots, z_N|]$, where $z_{ij}=0$ is the equation of the node $d_{ij}$ for $i=1, \dots, l$ and $j=1, \dots, \iota_i$. Moreover, there is a morphism between the versal deformation spaces, which is given as follows. 
\begin{align*}
    \mathbb k[|z_1, \dots, z_l|][|z_{l+1}, \dots, z_N|]\longrightarrow \mathbb k[|\{\{z_{ij}\}^{l}_{i=1}\}^{\iota_i}_{j=1}|][|z_{l+1}, \dots, z_N|]\\
    z_i\mapsto z_{i1}\cdots z_{i\iota_i}\hspace{3em}\forall i=1,\dots, l,~~~~\text{and}\\
   z_{j}\mapsto z_j\hspace{3em}\forall j=l+1,\dots, N
\end{align*}

This morphism is the local analytic picture of the stabilisation morphism $\pi: \mathcal M^{ss}_g\longrightarrow \overline{\mathcal M_g}$ at a given point of $\mathcal M^{ss}_g$. 
\end{enumerate}

\begin{Prop}\label{logstruc111}
\begin{enumerate}
\item The log structures on $\overline{\mathcal M_g}$ and $\mathcal M^{ss}_g$ which are parts of the basic log structure package on $\mathcal C_g /\overline{\mathcal M_g} $ and $\mathcal D_g / \mathcal M^{ss}_g$ (see Remark \ref{Rem:Moch}) are locally free. These log structures coincide with the divisorial log structures induced by the boundary divisors $\partial \overline{\mathcal M_g}$ and $\partial \mathcal M^{ss}_g$, respectively. 
\item The morphism $\mathcal M^{ss}_g\longrightarrow \overline{\mathcal M_g}$ is a log-smooth morphism with respect to the log-structures mentioned in (1).
\end{enumerate}
\end{Prop}
\begin{proof}
\underline{Proof of (1):}
The fact that the log structure from the basic log structure packages and the log structures induced from the boundary divisors are the same can be seen as follows.

Given a point $p\in \overline{\mathcal M_g}$ and the set of nodes $\{q_1, \dots, q_n\}\in \mathcal C_{g, p}$ (the fiber over $p$ of the universal curve $\mathcal C_{g}$), we have a $\mathbb{k}$ algebra $B$ and an element $b_i$ fitting into a commutative diagram of \'{e}tale neighbourhoods of the points $p$ and $q_i \in \{q_1, \dots, q_n\}$ (using Artin approximation):

\begin{equation}
\begin{tikzcd}
\mathcal C_g\arrow{d} && U_i \arrow{ll}{etale}\arrow{drr}\arrow{rr}{etale} && \spec (B[x,y]/(xy-b_i))\arrow{d} \arrow{r} &\spec (\mathbb k[x,y,a]/(xy-a)) \arrow{d}\\
\overline{\mathcal M_g} & & & & \spec B \arrow{llll}{etale} \arrow{r} & \spec \mathbb k[a] 
\end{tikzcd}
\end{equation}

Here the rightmost square is cartesian and $U_i$ denotes a suitable \'{e}tale neighbourhood of $q_i$ and not containing any other nodes in $\{q_1, \dots, q_n\}$. The scheme $\spec \mathbb k[a]$ is a versal deformation space of a node (i.e., $\spec \mathbb k[x,y]/(x y$)) and the family \[\spec (\mathbb k[x,y,a]/(xy-a))\to \spec \mathbb k[a]\] is a versal deformation family of curves for a node (see \cite[Theorem 14.1]{Hart}). If we consider the basic log structure package on $\spec (\mathbb k[x,y,a]/(xy-a)) / \Spec~(\mathbb k[a])$, the basic log structures on $\Spec~\mathbb k[a]$, agrees by Kato \cite[Lemma 4.4]{17} with the divisorial log structure given by the boundary divisor (i.e., the vanishing locus of the element $a\in \mathbb k[a]$). Now from \cite[Lemma 2.1, Lemma 2.2, Proposition 2.3]{17}, we can find an \'{e}tale neighbourhood $V$ of the point $p\in \overline{\mathcal M_g}$, and define a log structure by taking the amalgamated sum of the log structures contributed by each of the nodes in $\{q_1, \dots, q_n\}$.  But this is the same as the log structures induced by the pullback of the boundary divisor in $\overline{\mathcal M_g}$. This completes the argument. Similarly, the two log structures on $\mathcal M^{ss}_g$ are the same. 

\underline{Proof of (2):} Again using Artin approximation we have a commutative diagram 

\begin{equation}
\begin{tikzcd}
\mathcal M^{ss}_g\arrow{d} & U \arrow{l}{etale}\arrow{d} & V \arrow{r}{etale}\arrow{l}{etale} & \Spec ~\mathbb k[\{\{z_{ij}\}^{l}_{i=1}\}^{\iota_i}_{j=1}][\{z_{k}\}^N_{k=l+1}] \arrow{d}\\
\overline{\mathcal M_g} & W\arrow{l}{etale} \arrow{rr}{etale} &&  \Spec \mathbb k[z_1, \dots, z_l][z_{l+1}, \dots, z_N] 
\end{tikzcd}
\end{equation}
where all the squares and triangles are commutative and only the left square is Cartesian. 

It can be rewritten as the following commutative diagram
\begin{equation}
\begin{tikzcd}
\mathcal M^{ss}_g\arrow{d} & V \arrow{l}{etale}\arrow{d} \arrow{rr}{etale} &  & \Spec ~\mathbb k[\{\{z_{ij}\}^{l}_{i=1}\}^{\iota_i}_{j=1}][\{z_{k}\}^N_{k=l+1}] \arrow{d}\\
\overline{\mathcal M_g} & W\arrow{l}{etale} \arrow{rr}{etale} && \Spec  \mathbb k[z_1, \dots, z_l][z_{l+1}, \dots, z_N] 
\end{tikzcd}
\end{equation}

Notice that the rightmost vertical morphism is a morphism of log schemes. Since the right commutative diagram is commutative and the log structures on $V$ and $W$ are isomorphic to the pullback of the log structures from $\Spec ~\mathbb k[\{\{z_{ij}\}^{l}_{i=1}\}^{\iota_i}_{j=1}][\{z_{k}\}^N_{k=l+1}]$ and $\mathbb k[z_1, \dots, z_l][z_{l+1}, \dots, z_N]$, therefore we have the structure of a log-morphism on $V\rightarrow W$.  

Now we cover $\mathcal M^{ss}_g$ and $\overline{\mathcal M_g}$ with compatible \'etale open subsets with log morphisms between them. Now we need to show that these log-morphisms glue i.e., they are equal on the intersections. But this can be checked by passing to the completion, where the log structure is given by the following chart:

\begin{equation}
\begin{tikzcd}
& e_i\arrow{rr} && \sum^{\iota_i}_{j=1} e_{ij}\\
& \mathbb N^{\oplus l}\arrow{d}\arrow{rr} && \oplus^l_{i=1}(\mathbb N^{\oplus \iota_i})\arrow{d} & 
\\
& \mathbb k[|z_1, \dots, z_l|][|z_{l+1}, \dots, z_N|] \arrow["="]{rr}[swap]{z_i=z_{i1}\cdots z_{i\iota_i} } && \mathbb k[|\{\{z_{ij}\}^{l}_{i=1}\}^{\iota_i}_{j=1}|][|\{z_{k}\}^N_{k=l+1}|] & 
\end{tikzcd}
\end{equation}
where $e_i$ is sent to $z_i$ and $e_{ij}$ is sent to $z_{ij}$, where $i=1,\dots , l$.

The above map can be seen a tensor products of the following maps over the field $k$ along with an extra tensor product with $k[|z_{l+1}, \dots, z_N|]$, where  $k[|z_{l+1}, \dots, z_N|]$ does not contribute at all in the log structure. 

\begin{equation}
\begin{tikzcd}
& e_i\arrow{rr} && \sum^{\iota_i}_{j=1} e_{ij}\\
& \mathbb N\arrow{d}\arrow{rr} && \mathbb N^{\oplus \iota_i}\arrow{d} & 
\\
& \mathbb k[|z_i|] \arrow["="]{rr}[swap]{z_i=z_{i1}\cdots z_{i\iota_i} } && \mathbb k[|\{z_{ij}\}^{\iota_i}_{j=1}|] & 
\end{tikzcd}
\end{equation}
where $e_i$ is sent to $z_i$ and $e_{ij}$ is sent to $z_{ij}$.

Now it is straightforward to check that each of these maps is log-smooth using Kato's criterion (c.f. \cite[Theorem 3.5]{17b}). Since the product of log-smooth maps is log-smooth, it follows that the map $\mathcal M^{ss}_g\longrightarrow \overline{\mathcal M_g}$ is log-smooth. 
\end{proof}

\subsection{\textbf{Relative log-cotangent complex of the map \texorpdfstring{${\mathcal M^{ss}_g}\stackrel{f}\longrightarrow \overline{\mathcal M_g}$}{MssgtoMg}}}

\begin{Prop}\label{reltan}
The relative logarithmic cotangent complex of $f$ is trivial: $\mathbb L^{\log}_{f}\cong 0$. 
\end{Prop}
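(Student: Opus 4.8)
The goal is to run the same argument as in Proposition~\ref{rellog101}, using the versal picture of subsection~\ref{VerPic} together with Proposition~\ref{logstruc111}. By Proposition~\ref{logstruc111}(2) the morphism $f: \mathcal M^{ss}_g\to \overline{\mathcal M_g}$ is log-smooth for the locally free log structures coming from the universal curves (equivalently, from the boundary divisors), so $\mathbb L^{log}_f$ is a vector bundle concentrated in degree $0$. Consequently it is enough to show that $\mathbb L^{log}_f$ vanishes after pulling back along a smooth atlas, which is an étale-local — in fact formally local — question on source and target, and there it reduces to the computation already performed inside the proof of Proposition~\ref{rellog101}.

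\textbf{Steps.} First I would fix a $\mathbb k$-point of $\mathcal M^{ss}_g$ corresponding to a semistable curve $\mathcal X^{mod}$ with stable model $\mathcal X$, with nodes $\{c_i\}_{i=1}^l$ of $\mathcal X$ and nodes $\{d_{ij}\}_{j=1}^{\iota_i}$ of $\mathcal X^{mod}$ over $c_i$. By subsection~\ref{VerPic}, on versal deformation rings $f$ is the completion of
\begin{equation*}
\mathbb k[z_1,\dots,z_l][z_{l+1},\dots,z_N]\longrightarrow \mathbb k[\{z_{ij}\}][z_{l+1},\dots,z_N],\qquad z_i\mapsto z_{i1}\cdots z_{i\iota_i},\ \ z_j\mapsto z_j,
\end{equation*}
and, by the proof of Proposition~\ref{logstruc111}, the two log structures are the toric ones attached to $\{\prod_i z_i=0\}$ and $\{\prod_{i,j} z_{ij}=0\}$. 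Next I would record the local structure of the two stacks: $\overline{\mathcal M_g}$ near $[\mathcal X]$ is $[\mathrm{Def}(\mathcal X)/\mathrm{Aut}(\mathcal X)]$, while $\mathcal M^{ss}_g$ near $[\mathcal X^{mod}]$ is $[\mathrm{Def}(\mathcal X^{mod})/\mathrm{Aut}(\mathcal X^{mod})]$, where $\mathrm{Aut}(\mathcal X^{mod})$ sits in an extension $1\to \mathbb T\to \mathrm{Aut}(\mathcal X^{mod})\to \mathrm{Aut}(\mathcal X)\to 1$ with $\mathbb T=\prod_{i=1}^l \mathbb G_m^{\iota_i-1}$ the torus of automorphisms of the rational chains (a chain of $\iota_i-1$ copies of $\mathbb P^1$ over $c_i$, each contributing a $\mathbb G_m$), acting on $\mathrm{Def}(\mathcal X^{mod})$ on the $z_{ij}$-coordinates exactly as $G[\iota_i-1]=\mathbb G_m^{\iota_i-1}$ acts on $S[\iota_i-1]$ in \S2 and trivially on $z_{l+1},\dots,z_N$. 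Then $f$ is induced by the map $[\mathrm{Def}(\mathcal X^{mod})/\mathbb T]\to \mathrm{Def}(\mathcal X)$, which is identified with
\begin{equation*}
\Big[\textstyle\prod_{i=1}^l S[\iota_i-1]\;\big/\;\textstyle\prod_{i=1}^l G[\iota_i-1]\Big]\times T\;\longrightarrow\;\Big(\textstyle\prod_{i=1}^l S\Big)\times T,
\end{equation*}
the product of the maps $[S[\iota_i-1]/G[\iota_i-1]]\to S$ of \S2 with the identity on a smooth factor $T$ (absorbing the directions $z_{l+1},\dots,z_N$); the common quotient by $\mathrm{Aut}(\mathcal X)$ acts on both sides and does not change the relative log cotangent complex. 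Finally, relative log cotangent complexes are compatible with products and étale base change, the identity factor $T$ contributes $0$, and by the computation in the proof of Proposition~\ref{rellog101} the relative log cotangent complex of each $[S[\iota_i-1]/G[\iota_i-1]]\to S$, pulled back to $S[\iota_i-1]$, is the two-term complex $[\,\Omega_{S[\iota_i-1]/S}(\log\partial S[\iota_i-1])\to \mathcal O_{S[\iota_i-1]}^{\oplus(\iota_i-1)}\,]$ whose differential is the moment map for the $G[\iota_i-1]$-action and hence an isomorphism. Therefore $\mathbb L^{log}_f\cong 0$.

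\textbf{Main obstacle.} The delicate point is the second step: pinning down the étale-local product model of $f$, i.e. that $\mathrm{Aut}(\mathcal X^{mod})$ is the stated extension, that $\mathbb T$ acts on $\mathrm{Def}(\mathcal X^{mod})$ as the product of Jun Li's torus actions, and that this is compatible with the morphism of versal families of subsection~\ref{VerPic} and with the log structures on the nose. This is the deformation theory of nodal curves and of their modifications (as in \cite{29}), and it must be assembled coherently; once it is in place, the vanishing is a formal consequence of Proposition~\ref{rellog101}. (Alternatively one could argue that $\mathbb T^{log}_{\overline{\mathcal M_g}}\cong Rp_{*}\big(\omega_{\mathcal C_g/\overline{\mathcal M_g}}^{\vee}\big)[1]$ and likewise for $\mathcal M^{ss}_g$ with $\mathcal D_g$, and that $R\rho_*\mathcal O_{\mathcal D_g}\cong \mathcal O$ and $\omega_{\mathcal D_g/\mathcal M^{ss}_g}\cong \rho^{*}\big(\text{pullback of }\omega_{\mathcal C_g/\overline{\mathcal M_g}}\big)$ for the contraction $\rho:\mathcal D_g\to \pi^{*}\mathcal C_g$ of the rational chains, so that the natural map $\mathbb T^{log}_{\mathcal M^{ss}_g}\to f^{*}\mathbb T^{log}_{\overline{\mathcal M_g}}$ is an equivalence; the main obstacle then becomes justifying these identifications of log (co)tangent complexes.)
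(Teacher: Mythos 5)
Your proposal is essentially correct but follows a genuinely different route from the paper. The paper's proof is global: it identifies the log-tangent complexes as $\mathbb T^{log}_{\overline{\mathcal M_g}}\cong Rp_*(\omega^{\vee}_{\mathcal C_g/\overline{\mathcal M_g}})[1]$ and $\mathbb T^{log}_{\mathcal M^{ss}_g}\cong R\tilde p_*(\omega^{\vee}_{\mathcal D_g/\mathcal M^{ss}_g})[1]$, uses $Rf_*R\tilde p_*=Rp_*R\tilde f_*$ and $R\tilde f_*(\omega^{\vee}_{\mathcal D_g/\mathcal M^{ss}_g})\cong\omega^{\vee}_{\mathcal C_g/\overline{\mathcal M_g}}$ to deduce that the natural map $f^*\mathbb L^{log}_{\overline{\mathcal M_g}}\to\mathbb L^{log}_{\mathcal M^{ss}_g}$ is an equivalence, and then reads off $\mathbb L^{log}_f\cong 0$ from the transitivity triangle --- this is exactly the alternative you sketch in your final parenthetical. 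Your main argument instead localizes: you put $f$ into the product form $\bigl[\prod_i S[\iota_i-1]/\prod_i G[\iota_i-1]\bigr]\times T\to\bigl(\prod_i S\bigr)\times T$ on versal deformation spaces and quote the moment-map isomorphism from Proposition~\ref{rellog101}. This buys a very concrete picture (and makes transparent that the mechanism is identical to the $\mathfrak M\to S$ case), at the price of the étale-local structure result you correctly flag as the main obstacle: that $\mathcal M^{ss}_g$ is locally $[\mathrm{Def}(\mathcal X^{mod})/\mathrm{Aut}(\mathcal X^{mod})]$ with the relative inertia torus $\mathbb T$ acting in Jun Li's fashion, compatibly with the versal map of \S\ref{VerPic} and the log structures. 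The paper's argument sidesteps that entirely, needing only the (already established) log-smoothness and the cohomological identification of log-tangent complexes. One small correction to your first step: log-smoothness of a morphism of \emph{Artin} stacks does not put $\mathbb L^{log}_f$ in degree $0$; because the relative inertia of $f$ is the positive-dimensional torus $\mathbb T$, the complex has a degree-$1$ piece, and indeed your own local model is the two-term complex $[\Omega_{S[\iota_i-1]/S}(\log\partial S[\iota_i-1])\to\mathcal O^{\oplus(\iota_i-1)}]$ with the second term in degree $1$; the vanishing comes from the two terms cancelling via the moment map, not from a degree-$0$ vector bundle being zero. This does not affect your conclusion, since checking vanishing of any complex on a smooth atlas is still legitimate.
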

\begin{proof}
We have a commutative diagram (not a cartesian square)

\begin{equation}
\begin{tikzcd}
\mathcal D_g\arrow{d}{\tilde p} \arrow{rr}{\tilde f} & & \mathcal C_g\arrow{d}{p}\\
\mathcal M^{ss}_g\arrow{rr}{f} && \overline{\mathcal M_g}  
\end{tikzcd}
\end{equation}
\vspace{1em}

These families of semi-stable curves as discussed in Remark \ref{Rem:Moch} have relative log structure packages, defining log structures on all the spaces in this diagram, for which the projection morphisms $p$ and $\tilde{p}$ are log-smooth (\cite[Theorem 2.1., and ``Gobal Construction", Page 227]{17}). The log-tangent complex of $\overline{\mathcal M_g}$ is isomorphic to $Rp_*(\omega^{\vee}_{\mathcal C_g/ \overline{\mathcal M_g}})[1]$ and the log-tangent complex of $\mathcal M^{ss}_g$ is isomorphic to $R\tilde p_*(\omega^{\vee}_{\mathcal D_g/ {\mathcal M^{ss}_g}})[1]$. Now notice that $\tilde{f}^{*}\omega^{\vee}_{\mathcal C_g/ \overline{\mathcal M_g}} \cong \omega^{\vee}_{\mathcal D_g/ {\mathcal M^{ss}_g}}$ (see \cite[Page 169]{KFF}) and that the fibers of $\tilde{f}$ are connected and so $R\tilde{f}_{*}\mathcal{O} \cong \mathcal{O}$. Hence, $R\tilde{f}_{*}\omega^{\vee}_{\mathcal C_g/ \overline{\mathcal M_g}} \cong \omega^{\vee}_{\mathcal D_g/ {\mathcal M^{ss}_g}}$. Therefore we have, 

\[Rf_*\circ R\tilde p_*(\omega^{\vee}_{\mathcal D_g/ {\mathcal M^{ss}_g}})[1]=Rp_*R\tilde f_*(\omega^{\vee}_{\mathcal D_g/ {\mathcal M^{ss}_g}})[1]\cong Rp_*(\omega^{\vee}_{\mathcal C_g/ \overline{\mathcal M_g}})[1].\] 

Therefore, we conclude that the natural map between the two log-tangent complexes ( and the map between log-cotangent complexes) is an isomorphism. We have the following distinguished triangles of log-cotangent complexes
\begin{equation}
f^*\mathbb L^{\log}_{\overline {\mathcal M_g}}\longrightarrow \mathbb L^{\log}_{{\mathcal M^{ss}_g}}\longrightarrow \mathbb L^{\log}_f\longrightarrow f^*\mathbb L^{\log}_{\overline {\mathcal M_g}}[1] 
\end{equation}
of the map \[f: \mathcal M^{ss}_g\to \overline{\mathcal M_g}.\]

Since, the map $f^*\mathbb L^{\log}_{\overline {\mathcal M_g}}\longrightarrow \mathbb L^{\log}_{{\mathcal M^{ss}_g}}$ is an equivalence, therefore we conclude that $\mathbb L^{\log}_f\cong 0$. 
\end{proof}

\subsection{\textbf{Relative logarithmic Dolbeault shape and shifted symplectic forms}}

Let $\mathcal D^{Dol}_g$ denote the relative logarithmic Dolbeault moduli stack for the family of curves $\mathcal D_g\longrightarrow \mathcal M^{ss}_g$. Then $\mathcal M^{Dol}_g:=\map_{\mathcal M^{ss}_g}(\mathcal D^{Dol}_g, BGL_n\times \mathcal M^{ss}_g)$ is the relative derived moduli stack of Gieseker-Higgs bundles viewed over $\overline{\mathcal M_g}$. 

\begin{Prop}
The morphism $\mathcal M^{Dol}_g\longrightarrow \mathcal M^{ss}_g$ is a quasi-smooth morphism of derived Artin stacks.
\end{Prop}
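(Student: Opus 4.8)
The plan is to argue exactly as in the proof of Proposition~\ref{qs11}, with the stack of expanded degenerations $\mathfrak M$ replaced by $\mathcal M^{ss}_g$ and the universal expanded family $\mathcal X_{\mathfrak M}$ replaced by the universal semistable curve $\mathcal D_g\to\mathcal M^{ss}_g$. Write $M:=\mathcal M^{lDol}_g$ and $\mathcal S:=\mathcal M^{ss}_g$, with structure morphism $\pi\colon M\to\mathcal S$; recall that quasi-smoothness of $\pi$ means that $\pi$ is locally of finite presentation and that $\mathbb L_{M/\mathcal S}$ has Tor-amplitude in $[-1,1]$.

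First I would establish representability and local finite presentation. Since $\mathcal D_g\to\mathcal M^{ss}_g$ is a proper flat family of Gorenstein curves of relative dimension $1$, its relative dualising sheaf $\omega_{\mathcal D_g/\mathcal M^{ss}_g}$ is a line bundle, and the relative logarithmic Dolbeault shape is identified, by the analogue of \eqref{equi1}, with $\underline{\Spec}_{\mathcal D_g}\Sym(\omega_{\mathcal D_g/\mathcal M^{ss}_g}[-1])$. Hence $\mathcal D^{lDol}_g\to\mathcal D_g$ is affine with coherent (perfect) structure sheaf $\mathcal O_{\mathcal D_g}\oplus\omega_{\mathcal D_g/\mathcal M^{ss}_g}[-1]$, and composing with $\mathcal D_g\to\mathcal M^{ss}_g$ shows that $\mathcal D^{lDol}_g\to\mathcal M^{ss}_g$, and after base change the projection $p\colon\mathcal D^{lDol}_g\times_{\mathcal M^{ss}_g}M\to M$, is a proper locally complete intersection morphism. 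Toën's mapping-stack theorem \cite[Theorem 0.3]{To} then applies verbatim as in Proposition~\ref{qs11}: $M$ is a derived Artin stack, $\pi$ is locally of finite presentation, and $\mathbb L_{M/\mathcal S}$ is perfect.

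Next I would identify the relative tangent complex. Pulling back the trace pairing on $BGL_n$ as in Theorem~\ref{main112}, and using the sheaf equivalence of Lemma~\ref{Corr1} together with \eqref{equi1}, at a point given by a semistable curve $C$ with Higgs bundle $(E,\phi)$ one gets
\[
\mathbb T_{M/\mathcal S}\;\cong\;Rp_*\,\mathcal{E}nd\,\mathcal E\,[1]\;\cong\;Rr_*\,\mathcal C(E,\phi)\,[1],\qquad
\mathcal C(E,\phi)=\big[\,\mathcal{E}nd\,E\xrightarrow{[-,\phi]}\mathcal{E}nd\,E\otimes\omega_{C}\,\big],
\]
with $\mathcal{E}nd\,E$ in degree $0$ and $r\colon\mathcal D_g\times_{\mathcal M^{ss}_g}M\to M$ the universal curve, which is proper and flat of relative dimension $1$. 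Since $\mathcal C(E,\phi)$ is a two-term complex of locally free sheaves of amplitude $[0,1]$, its pushforward $Rr_*\mathcal C(E,\phi)$ has Tor-amplitude in $[0,2]$, hence $\mathbb T_{M/\mathcal S}$, and dually $\mathbb L_{M/\mathcal S}$, has Tor-amplitude in $[-1,1]$; equivalently, as in Proposition~\ref{qs11}, one observes that $\mathcal C(E,\phi)^{\vee}\otimes\omega_C\cong\mathcal C(E,\phi)[1]$, so Grothendieck--Serre duality makes $Rr_*\mathcal C(E,\phi)[1]$ self-dual up to a shift. This gives quasi-smoothness of $\pi$.

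The step I expect to need the most care is not the homological bookkeeping above --- which is a transcription of Proposition~\ref{qs11} --- but checking that the geometric hypotheses of \cite[Theorem 0.3]{To} genuinely hold over the new base: that $\mathcal D^{lDol}_g\to\mathcal M^{ss}_g$ is proper and lci. This rests on $\mathcal M^{ss}_g$ being a smooth Artin stack carrying its universal family of semistable curves (proper, flat, Gorenstein of relative dimension $1$), and on the log-structure identifications of Propositions~\ref{logstruc111} and~\ref{reltan}; once those are granted the argument runs as in the absolute case.
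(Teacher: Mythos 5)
Your proposal is correct and follows essentially the same route as the paper, whose own proof of this Proposition is simply the statement that it is identical to the proof of Proposition~\ref{qs11}; you have transcribed that argument (identification of $\mathbb T_{M/\mathcal S}$ with $Rr_*\mathcal C(E,\phi)[1]$, perfectness via \cite[Theorem 0.3]{To}, and the Tor-amplitude bound via Grothendieck--Serre duality) to the base $\mathcal M^{ss}_g$. The extra care you take in verifying that $\mathcal D^{lDol}_g\to\mathcal M^{ss}_g$ is proper and lci over the new base is a reasonable elaboration of what the paper leaves implicit.
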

\begin{proof}
Same as the proof of Proposition \ref{qs11}.
\end{proof}

We equip $\mathcal M^{Dol}_g$ with the locally free log-structure pulled back from $\mathcal M^{ss}_g$ via the morphism $\mathcal M^{Dol}_g\longrightarrow \mathcal M^{ss}_g$.

\begin{Thm}\label{main1567}
There is a $0$-shifted relative log-symplectic form on $\mathcal M^{Dol}_g$ (relative to the moduli stack of stable curves $\overline{\mathcal M_g}$).
\end{Thm}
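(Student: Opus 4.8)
The plan is to run the argument of Theorem~\ref{main112} almost verbatim, with the stack of expanded degenerations $\mathfrak M$ replaced by $\mathcal M^{ss}_g$ and the one-dimensional base $S$ replaced by $\overline{\mathcal M_g}$, substituting Proposition~\ref{reltan} for Proposition~\ref{rellog101}. First I would note that $\mathcal D_g\to \mathcal M^{ss}_g$ is by construction a semi-stable family of curves, so Theorem~\ref{main} applies: $\mathcal D^{lDol}_g$ is $\mathcal O$-compact and $\mathcal O$-oriented over $\mathcal M^{ss}_g$, and hence $\mathcal M^{lDol}_g=\map_{\mathcal M^{ss}_g}(\mathcal D^{lDol}_g, BGL_n\times \mathcal M^{ss}_g)$ acquires a $0$-shifted symplectic structure relative to $\mathcal M^{ss}_g$, with pairing the $\mathfrak{gl}_n$-trace form followed by the $\mathcal O$-orientation, exactly as in the proof of Theorem~\ref{main112}. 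In particular the non-degeneracy map $\mathbb T_{\mathcal M^{lDol}_g/\mathcal M^{ss}_g}\to \mathbb L_{\mathcal M^{lDol}_g/\mathcal M^{ss}_g}$ is an equivalence and both complexes are perfect of Tor-amplitude $[-1,1]$, the projection $\rho\colon \mathcal M^{lDol}_g\to \mathcal M^{ss}_g$ being quasi-smooth by the Proposition preceding the statement.

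Next I would set up the log picture. By Proposition~\ref{logstruc111} the stabilization morphism $\mathcal M^{ss}_g\to \overline{\mathcal M_g}$ is log-smooth between locally free log stacks, so it is classified by a map $\mathcal M^{ss}_g\to \mathcal L^0_{\overline{\mathcal M_g}}$; composing with $\rho$ produces $\rho_{log}\colon \mathcal M^{lDol}_g\to \mathcal L^0_{\overline{\mathcal M_g}}$, which equips $\mathcal M^{lDol}_g$ with exactly the pulled-back log structure described before the statement, and which is quasi-smooth because a composite of a quasi-smooth map with a smooth map is quasi-smooth. Since $\rho$ is strict for these log structures, $\mathbb L^{log}_{\mathcal M^{lDol}_g/\mathcal M^{ss}_g}\cong \mathbb L_{\mathcal M^{lDol}_g/\mathcal M^{ss}_g}$; plugging this into the transitivity triangle
\begin{equation}
\rho^{*}\mathbb L^{log}_{\mathcal M^{ss}_g/\overline{\mathcal M_g}}\longrightarrow \mathbb L^{log}_{\mathcal M^{lDol}_g/\overline{\mathcal M_g}}\longrightarrow \mathbb L^{log}_{\mathcal M^{lDol}_g/\mathcal M^{ss}_g}\longrightarrow \rho^{*}\mathbb L^{log}_{\mathcal M^{ss}_g/\overline{\mathcal M_g}}[1]
\end{equation}
and invoking Proposition~\ref{reltan} ($\mathbb L^{log}_{\mathcal M^{ss}_g/\overline{\mathcal M_g}}\cong 0$) yields a canonical equivalence $\mathbb L^{log}_{\mathcal M^{lDol}_g/\overline{\mathcal M_g}}\cong \mathbb L_{\mathcal M^{lDol}_g/\mathcal M^{ss}_g}$, and dually for the tangent complexes.

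Finally I would transport the symplectic structure. The equivalence of relative (log-)cotangent complexes induces an equivalence of graded mixed cdgas $\DR(\mathcal M^{lDol}_g/\mathcal L^0_{\overline{\mathcal M_g}})\simeq \DR(\mathcal M^{lDol}_g/\mathcal M^{ss}_g)$, hence an equivalence between the space of closed relative log $2$-forms of degree $0$ over $\overline{\mathcal M_g}$ and the space of closed relative $2$-forms of degree $0$ over $\mathcal M^{ss}_g$, compatible with the non-degeneracy condition; pushing the form from the first step through this equivalence gives the desired $0$-shifted relative log-symplectic form on $\mathcal M^{lDol}_g$ over $\overline{\mathcal M_g}$, and one checks as in Theorem~\ref{main112} that its pairing is the trace form composed with the orientation $\eta$. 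The step I expect to be the main obstacle --- and the one I would write out carefully --- is precisely this last identification at the level of \emph{closed} forms rather than merely underlying $2$-forms; it is the only place the vanishing of $\mathbb L^{log}_{\mathcal M^{ss}_g/\overline{\mathcal M_g}}$ is genuinely used, and it parallels the corresponding passage in the proof of Theorem~\ref{main112}. A secondary point worth verifying is that the proof of Theorem~\ref{main} still runs with the Artin-stack base $\mathcal M^{ss}_g$ in place of $S$, i.e. that $H^0$ of the pushforward of $\mathcal O_{\mathcal D_g}$ to $\mathcal M^{ss}_g$ is $\mathcal O_{\mathcal M^{ss}_g}$ and that relative Serre duality holds for the semi-stable family $\mathcal D_g/\mathcal M^{ss}_g$; these are standard for geometrically connected families of semi-stable curves.
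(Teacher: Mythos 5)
Your proposal is correct and follows essentially the same route as the paper: establish the relative $0$-shifted symplectic form over $\mathcal M^{ss}_g$ via $\mathcal O$-compactness and $\mathcal O$-orientation (Theorem \ref{main}), then use the transitivity triangle together with $\mathbb L^{log}_{\mathcal M^{ss}_g/\overline{\mathcal M_g}}\cong 0$ (Proposition \ref{reltan}) and the strictness of the pulled-back log structure to reinterpret it as a relative log-symplectic form over $\overline{\mathcal M_g}$. The only difference is that you spell out the transport step at the level of closed forms via the equivalence of graded mixed cdgas and flag the Artin-stack-base verification for Theorem \ref{main}, points the paper leaves implicit.
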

\begin{proof}
Consider the composite morphism $\mathcal M^{Dol}_g\xrightarrow{\pi} \mathcal M^{ss}_g\xrightarrow{f} \overline{\mathcal M_g}$. We have a distinguished triangle
\begin{equation}
\pi^*\mathbb L^{\log}_{\mathcal M^{ss}_g/\overline{\mathcal M_g}}\longrightarrow \mathbb L^{\log}_{\mathcal M^{Dol}_g/\overline{\mathcal M_g}}\longrightarrow \mathbb L^{\log}_{\mathcal M^{Dol}_g/\overline{\mathcal M_g}}\longrightarrow \pi^*\mathbb L^{\log}_{\mathcal M^{ss}_g/\overline{\mathcal M_g}}[1].
\end{equation}

But since $\mathbb L^{\log}_{\mathcal M^{ss}_g/\overline{\mathcal M_g}}\cong 0$ (Proposition \ref{reltan}), therefore the relative log cotangent complex of the composite morphism is isomorphic to the relative log cotangent complex of the morphism $\pi$. But notice that the log structure of $\mathcal M^{Dol}_g$ is pulled back of the log structure of $\mathcal M^{ss}_g$ via the map $\pi$. Therefore the relative log-cotangent complex of the morphism $\pi$ is isomorphic to the relative cotangent complex of the morphism $\pi$. Now since the relative logarithmic Dolbeault stack over the moduli stacks $\mathcal M^{Dol}_g$ is $\mathcal O$-compact and $\mathcal O$-oriented (Theorem \ref{main}); therefore $\mathcal M^{Dol}_g$ has a $0$-shifted relative symplectic form over $\mathcal M^{ss}_g$, which is a $0$-shifted relative log-symplectic form viewed over the moduli stack of stable curves $\overline{\mathcal M_g}$.
\end{proof}

\

\
\appendix
\section{Classical Artin stack of Gieseker-Higgs bundles and its local properties}

In this appendix we study the classical truncation $M^{cl}_{Gie}$ of the derived moduli stack of Higgs bundles (Section 3), restricted to the Gieseker locus (Definition \ref{gis1}). Our main results are:

- The stack of Gieseker vector bundles $N_{Gie}$ (and in particular its closed fiber $N^{cl}_{Gie,0}$) is an almost very good stack in the sense of Soibelman \cite[Definition 2.1.2]{32e}.

- The classical stack of Gieseker-Higgs bundles over the special fiber, $M^{cl}_{Gie,0}$, is an irreducible local complete intersection of pure dimension $2\dim N^{cl}_{Gie,0} + 1$.

These facts are used in Section 5 to deduce flatness of the Hitchin map on the Gieseker locus.

\subsection{The classical Artin stacks of Gieseker vector bundles and Gieseker Higgs bundles}\label{Gies21}

We now describe the underlying classical Artin stacks associated to the derived moduli problems considered earlier. Let $N_{Gie}$ denote the classical Artin stack parameterizing Gieseker vector bundles, and let $M^{cl}_{Gie}$ denote the classical Artin stack parameterizing Gieseker–Higgs bundles (both defined over the base $S$).

\subsubsection{Gieseker vector bundles}
\
\begin{Def}\label{Gie1101}
A vector bundle $\mathcal E$ of rank $n$ on $X_r$ with $r\geq 1$ is called a \textbf{Gieseker vector bundle} if
 \begin{enumerate}
\item $\mathcal E|_{R[r]}$ is a strictly standard vector bundle on $R[r]\subset X_r$, i.e., for each $i=1,\dots, r $, $\exists$ non-negative integers $a_i$ and $b_i$ such that $\mathcal E|_{R[r]_i}\cong \mathcal O^{\oplus a_i}\oplus \mathcal O(1)^{\oplus b_i}$, and
\item the direct image $\pi_{r *}(\mathcal{E})$ is a torsion-free $\mathcal O_{X_0}$-module.
\end{enumerate}
\
Any vector bundle on $X_0$ is called a Gieseker vector bundle. In the literature, a Gieseker vector bundle is also called an admissible vector bundle.
\
A Gieseker vector bundle $(X_r, \mathcal E)$ is called a \textbf{stable Gieseker vector bundle} if $\pi_{r *}\mathcal{E}$ is a stable torsion-free sheaf on the irreducible nodal curve $X_0$, where $\pi_r: X_r\longrightarrow X_0$ is the natural contraction map.
\
A \textbf{(stable) Gieseker vector bundle} on a modification $\mathfrak X_T$ (see Definition \ref{gis120}) is a vector bundle such that its restriction to each $(\mathfrak X_{T})_t$ is a (stable) Gieseker vector bundle.
\end{Def}
\
\begin{Rem}\label{AltGie}
A Gieseker vector bundle can also be defined as a vector bundle $(X_r, \mathcal E)$ on a Gieseker curve $X_r$ satisfying the following two conditions
\begin{enumerate}
\item $\mathcal E|_{R[r]}$ is a globally generated vector bundle on $R[r]$,
\item the direct image $\pi_{r *}(\mathcal E)$ is a torsion-free $\mathcal O_{X_0}$-module.\end{enumerate}
\end{Rem}
\
\begin{Def}\label{gis1} A \textbf{Gieseker-Higgs bundle} on $\mathfrak X_T$ is a pair
$(\mathcal E_T , \phi_T )$, where $\mathcal E_T$ is a vector bundle on $\mathfrak X_T$, and $\phi_T : \mathcal E_T \longrightarrow \mathcal E_T \otimes \omega_{\mathfrak X_T/T}$ is an $\mathcal O_{\mathfrak X_T}$ -module homomorphism satisfying the following
\begin{enumerate}
\item $\mathcal E_T$ is a Gieseker vector bundle on $\mathfrak X_T$,
\item for each closed point $t \in T$ over $\eta_0 \in S$, the direct image $(\pi_t)_*(\mathcal E_t)$ is a torsion-free sheaf on $X_0$ and $(\pi_t)_*\phi_t: (\pi_t)_*(\mathcal E_t)\longrightarrow (\pi_t)_*(\mathcal E_t)\otimes \omega_{X_0}$ is an $\mathcal O_{X_0}$-module homomorphism. We refer to such a pair $((\pi_t)_*(\mathcal E_t), (\pi_t)_*\phi_t)$ as a torsion-free Higgs pair on the nodal curve $X_0$.
\end{enumerate}
\end{Def}
\
\begin{Rem}\label{AltGie2}
A Gieseker-Higgs bundle can also be defined as a Higgs bundle $(X_r, \mathcal E, \phi)$ on a Gieseker curve $X_r$ satisfying the following two conditions
\begin{enumerate}
\item $\mathcal E|_{R[r]}$ is a globally generated vector bundle on $R[r]$,
\item the direct image $\pi_{r*}(\mathcal E)$ is a torsion-free $\mathcal O_{X_0}$-module.
\end{enumerate}
\end{Rem}
\
\begin{Rem}
From \cite[Definition-Notation 1, Lemma 2, and Proposition 5]{25}, it follows that for the moduli problem of vector bundles (Higgs bundles) of rank $n$, we have to consider Gieseker curves $X_r$, where $r=0,\dots, n$.
\end{Rem}
We fix the rank and degree to be $n$ and $d$, respectively. Also, $n\geq 1$ and $d\in \mathbb Z$.
\
\subsubsection{\textbf{Stack of torsion-free Hitchin pairs}}\label{at3}
We recall the definition of \textbf{the moduli stack of torsion-free Hitchin pairs}
$$
\TFH(\mathcal X/S): {Sch}/S\rightarrow {Groupoids} $$
\begin{equation}
T
\mapsto \left\{
\begin{array}{@{}ll@{}}
\text{Families of torsion-free Hitchin pairs} \\
(\mathcal F_T,\phi_T: \mathcal F_T\longrightarrow \mathcal F_T\otimes \omega_{\mathcal X_T/T})\\
\text{of rank} ~n~ \text{and degree}~ d~ \text{over the original}\\
\text{family of curves}~~ \mathcal X_T/T~~ (\ref{DegeCourbes})
\end{array}\right\}
\end{equation}
It is an Artin stack. For the construction of an atlas for $\TFH(\mathcal X/S)$, we refer to \cite[5.0.7. The total family construction.]{2a}. Following the notation from \cite{2a}, we denote it by $\coprod_{m\geq m_0} R^{\Lambda,m}_S$. The superscript ``$\Lambda$" is because for the construction of the quot scheme of torsion-free Hitchin pairs one views them as $\Lambda$-modules, where $\Lambda=\Sym~ (\omega^{\vee}_{\mathcal X/S})$ \cite{32}. We denote the stack of families of torsion-free sheaves over $\mathcal X/S$ of rank $n$ and degree $d$ by $TF(\mathcal X/S)$. The stack $TF(\mathcal X/S)$ is a reduced and irreducible Artin stack.

\smallskip 

\subsubsection{\textbf{Classical Artin stack of Gieseker-Higgs bundles}} \label{class}

Given any derived Artin stack $F: \textsf{cdga} \longrightarrow \mathbb S$ one can define a classical/ordinary Artin stack by considering the composition functor $F^{cl}: \textsf{alg}_{\mathbb k}\longrightarrow \textsf{cdga}_{\mathbb k}\longrightarrow \mathbb S$. We call $F^{cl}$ as the underlying classical Artin stack of $F$. Following this notation, we denote the underlying classical Artin stack of the derived stack $M$ of Higgs bundles over the family of curves $\mathcal X_{\mathfrak M}/\mathfrak M$ by $M^{cl}$.
\
Let us denote by $\Coh(\mathcal X/S)$ the Artin stack of coherent $\mathcal O_{\mathcal X}-$ modules that are flat over $S$. There is a natural map
$$\theta: M^{cl}\longrightarrow \Coh(\mathcal X/S)$$
which is given by the push forward of the underlying bundle $\pi_*\mathcal E$, where $\pi: \mathfrak X\longrightarrow \mathcal X$ is the modification morphism.
\
Consider the open sub-stack $TF(\mathcal X/S)\subset \Coh(\mathcal X/S)$ consisting of torsion-free sheaves. The stack $\theta^{-1}(TF(\mathcal X/S))$ is an open substack of $M^{cl}$. Consider the natural map $\pi^*\pi_*\mathcal E\longrightarrow \mathcal E$ on the universal curve $\mathfrak X$ on $M^{cl}$. Consider the sub-stack of $M^{cl}$ where the map is surjective. It is again an open sub-stack. Let us denote it by $M^{gg}$ (the superscript "gg" stands for globally generated). Then we define an open sub-stack
$$M^{cl}_{Gie}:=\theta^{-1}(TF(\mathcal X/S))\cap M^{gg}$$
It is an open sub-stack consisting of Gieseker-Higgs bundles of rank $n$ and degree $d$. There is a natural map
\begin{align}\label{pushforward112}
M^{cl}_{Gie}\longrightarrow \TFH(\mathcal X/S)\\
(\mathfrak X, \mathcal E, \phi)\mapsto (\pi_*\mathcal E, \pi_*\phi)
\end{align}
where $\pi: \mathfrak X\longrightarrow \mathcal X$ is the modification map. We denote by $M^{^{cl}}_{Gie,0}$ the closed fibre of the map $M^{^{cl}}_{Gie}\longrightarrow S$.
\begin{Rem}
    We remind the reader that the stacks $M^{cl}$ and $M^{cl}_{Gie}$ are defined over the stack of expanded degenerations $\mathfrak M$. But the codomain stacks $\Coh(\mathcal X/S)$ and $\TFH(\mathcal X/S)$ live only on $S$.
\end{Rem}

\smallskip

\subsection{Construction of an atlas for $N_{Gie}$}\label{Atlas1}

Let us denote by $N_{Gie}$ the classical stack of Gieseker vector bundles. It can be seen as the closed substack of $M^{cl}_{Gie}$ consisting of the Gieseker-Higgs bundles $(\mathcal E, \phi)$, whose Higgs field $\phi=0$. In this subsection, we will give the construction of an atlas of the stack $N_{Gie}$. Let $\mathcal{O}_{\mathcal{X}/S}(1)$ be a relatively ample line bundle for the family of curves $\mathcal{X}/S$. As before, we fix rank $n$ and degree $d$.

\begin{Def}
Let $m, N(m)$ be positive integers with $N(m) \geq n$. Define the functor
\[
\mathcal{G}^{m}_S : {Sch}/S \longrightarrow {Sets}
\]
by
\[
\mathcal{G}^{m}_S(T) = \{(\Delta_T, V_T)\},
\]
where

\begin{enumerate}
\item $\Delta_T \subset \mathcal{X} \times_S T \times \mathrm{Grass}(N(m),n)$ is a closed subscheme,
\item the projection $j : \Delta_T \to T \times \mathrm{Grass}(N(m),n)$ is a closed immersion,
\item the projection $\Delta_T \to \mathcal{X} \times_S T$ is a modification,
\item the projection $p_T : \Delta_T \to T$ is a flat family of Gieseker curves,
\item Let $\mathcal{V}$ be the tautological quotient bundle of rank $n$ on $\mathrm{Grass}(N(m),n)$ and $\mathcal{V}_T$ its pullback to $T \times \mathrm{Grass}(N(m),n)$. Then
   \[
   V_T := j^*(\mathcal{V}_T)(-m)
   \]
   is a Gieseker vector bundle on the modification $\Delta_T$ of rank $n$ and degree $d$.
\item For each $t \in T$, the quotient $\mathcal{O}_{\Delta_t}^{N(m)} \twoheadrightarrow V_t(m)$ induces an isomorphism
   \[
   H^0(\Delta_t, \mathcal{O}_{\Delta_t}^{N(m)}) \;\cong\; H^0(\Delta_t, V_t(m))
   \]
   and $H^1(\Delta_t, V_t(m)) = 0$.
\end{enumerate}
\end{Def}

Let $P(m)$ be the Hilbert polynomial of the closed subscheme $\Delta_s$ of $\mathcal{X}_s \times \mathrm{Grass}(N(m),n)$ with respect to the polarisation $\mathcal{O}_{\mathcal{X}_s}(1) \boxtimes \mathcal{O}_{\mathrm{Grass}(N(m),n)}(1)$, where $\mathcal{O}_{\mathrm{Grass}(N(m),n)}(1) = \det \mathcal{V}$.

It is shown in \cite[Proposition 8]{25} that the functor $\mathcal{G}^{m}_S$ is represented by a $PGL(N(m))$-invariant open subscheme $\mathcal{Y}^{m}_S$ of the Hilbert scheme
\[
\mathcal{H}_S = \mathrm{Hilb}^{P(m)}(\mathcal{X} \times \mathrm{Grass}(N(m),n)).
\]

Finally, the disjoint union
\[
\mathcal{Y}_S := \coprod_{N(m),\, m \geq 0} \mathcal{Y}^{m}_S \longrightarrow N_{Gie}
\]
is an atlas. The varieties $\mathcal{Y}^{m}_S$ are smooth and the projection map to the stack is smooth.

\subsection{Construction of an atlas for $M^{cl}_{Gie}$}\label{at2}

\begin{Def}\label{Ydef}
Define the functor
\[
\mathcal{G}^{H,m}_S : {Sch}/\mathcal{Y}_S \longrightarrow {Groups}
\]
by
\[
\mathcal{G}^{H,m}_S(T) = H^0\bigl(T, (p_T)_*(\End \mathcal{V}_T \otimes \omega_{\Delta_T/T})\bigr),
\]
where $p_T : \Delta_T := \Delta_{\mathcal{Y}_S} \times_{\mathcal{Y}_S} T \to T$ is the projection, and $\omega_{\Delta_T/T}$ is the relative dualizing sheaf of the family of curves $p_T$.
\end{Def}

Since each $\mathcal{Y}^m_S$ is reduced, the functor $\mathcal{G}^{H,m}_S$ is representable: there exists a linear scheme $\mathcal{Y}^{H,m}_S$ over $\mathcal{Y}^m_S$ representing it.

For an $S$-scheme $T$, a point in $\mathcal{G}^{H,m}_S(T)$ consists of $(V_T, \phi_T)$ where $(V_T)$ is in $\mathcal{G}^m_S(T)$ and $(V_T, \phi_T)$ is a Gieseker–Higgs bundle.

The disjoint union
\[
\mathcal{Y}^H_S := \coprod_{N(m),\, m \geq 0} \mathcal{Y}^{H,m}_S \longrightarrow M^{cl}_{Gie}
\]
is an atlas. The projection map to the stack is smooth.

\begin{Rem}\label{equivar}
There is a natural morphism
\[
\tilde{\theta} : \mathcal{Y}^{H,m}_S \longrightarrow R^{\Lambda,m}_S
\]
given by pushforward
\[
(\Delta_T, V_T, \phi_T) \;\longmapsto\; \bigl( (\pi_T)_* V_T,\; (\pi_T)_* \phi_T \bigr),
\]
where $\pi_T : \Delta_T \to \mathcal{X} \times_S T$ is the modification map. This morphism is $GL_{N(m)}$-equivariant.
\end{Rem}

\medskip

\subsection{\textbf{Dimension and local properties of \texorpdfstring{$M^{cl}_{Gie,0}$}{MclGie0}}}
In this subsection, we will compute the dimension of $M^{cl}_{Gie,0}$ and show that it is a local complete intersection. 

\medskip

\subsubsection{\textbf{Relative log-symplectic reduction}}\label{Reduc} Fix one atlas component $\mathcal Y^m_S$ of $M^{cl}_{Gie}$. The group $GL_{N(m)}$ acts on $\mathcal Y^m_S$ and the quotient stack $[\mathcal Y^m_S/GL_{N(m)}]$ is an open sub-stack of $N_{Gie}$. 

Consider the relative log-cotangent bundle $\Omega^{\log}_{\mathcal Y^m_S/S}$ of the morphism $\mathcal Y^m_S\to S$. Since the $GL_{N(m)}$ action preserves the normal crossing divisor $\mathcal Y^m_0$ (the closed fibre of $\mathcal Y^m_S\longrightarrow S$), the action of $GL_{N(m)}$ lifts to an action on $\Omega^{\log}_{\mathcal Y^m_S/S}$ with a moment map $\mu_{\log}: \Omega^{\log}_{\mathcal Y^m_S/S}\longrightarrow \mathfrak{gl}_{N(m)}^*$. The action of $GL_{N(m)}$ has a generic stabiliser $\mathbb G_m$; therefore the map $\mu_{\log}$ actually factors through $\Omega^{\log}_{\mathcal Y^m_S/S}\longrightarrow \mathfrak{pgl}_{N(m)}^*$. Since we know that there exists an open subset of $\mathcal Y^m_S$ where the action of $GL_{N(m)}$ has stabiliser isomorphic to $\mathbb G_m$ (namely, the locus of stable vector bundles), therefore the map $\Omega^{log}_{\mathcal Y^m_S/S}\longrightarrow \mathfrak{pgl}_{N(m)}^*$ is surjective. Then one can show that 

\begin{equation}\label{qqq112}
    \mathcal Y^{H, m}_S=\mu_{\log}^{-1}(0).
\end{equation}

To see this, note that $\mathcal Y^m_S$ is a principal $GL_{N(m)}$ bundle over an open subset of the stack $N_{Gie}$. Let $a$ denote the projection $a: \mathcal Y^m_S\longrightarrow N_{Gie}$. Let us write the cotangent sequence of the sheaf of relative logarithmic differential forms. 
\begin{equation}
    0\longrightarrow a^*(T^{\log}_{N_{Gie}/S})^*\longrightarrow (T^{\log}_{\mathcal Y^m_S/S})^*\longrightarrow \mathfrak gl^*_{N(m)}\otimes \mathcal O_{\mathcal Y^m_S}
\end{equation}
where the last map is given by $\mu_{\log}$. Notice that $[(T^{\log}_{\mathcal Y^m_S/S})^*\longrightarrow \mathfrak gl^*_{N(m)}\otimes \mathcal O_{\mathcal Y^m_S}]$ is the cotangent complex of the stack $N_{Gie}$. Therefore, using standard arguments from the deformation theory of vector bundles, it follows that the $0$-th cohomology of the complex at a point $[\mathcal O^{N(m)}\rightarrow \mathcal E]\in \mathcal Y^m_S$ is isomorphic to $H^1(\End \mathcal E)^{\vee}\cong \Hom(\mathcal E, \mathcal E\otimes \omega)$. Therefore, by Definition \ref{Ydef}, $\mathcal Y^{H, m}_S=\mu_{\log}^{-1}(0)$. 

Therefore, $$[\mu_{\log}^{-1}(0)/GL_{N(m)}]$$ is an open subset of $M^{cl}_{Gie}$. 

We will compute the dimension of $\mu_{\log}^{-1}(0)$ and show that it is local complete intersection. We begin by recalling a result on the dimension of the image of a cotangent fibre under the moment map. 

\

\begin{Lem}\label{Moment}
$\dim \mu_{\log}(\Omega^{\log}_{\mathcal Y^m_S/S,y})\geq ~\dim~\big (\frac{\mathfrak{gl}_{N(m)}}{\mathfrak{gl}_{N(m), y}}\big )^*$, where $\mathfrak{gl}_{N(m),y}$ is the Lie algebra of the stabilizer of $y \in \mathcal Y^m_S$ under the action of $GL_N(m)$.
\end{Lem}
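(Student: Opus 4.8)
The plan is to identify the restriction of $\mu_{log}$ to a single relative log-cotangent fibre with a linear map of vector spaces and compute its rank; the asserted inequality will in fact come out as an equality. First I would recall the precise form of the lifted action and its moment map. Since the $GL_{N(m)}$-action on $\mathcal Y^m_S$ is an action over $S$ preserving the normal crossing divisor $\mathcal Y^m_0$, the infinitesimal action $\mathfrak{gl}_{N(m)}\to H^0(\mathcal Y^m_S, T_{\mathcal Y^m_S/S})$ factors through the relative log tangent sheaf, giving a map $\mathfrak{gl}_{N(m)}\to H^0(\mathcal Y^m_S, T^{log}_{\mathcal Y^m_S/S})$; evaluating at a point $y$ yields a linear map $a_y:\mathfrak{gl}_{N(m)}\longrightarrow T^{log}_{\mathcal Y^m_S/S,y}$. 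Because $\mathbb k$ has characteristic zero, the kernel of $a_y$ is exactly $\mathfrak{gl}_{N(m),y}=\mathrm{Lie}\big(\mathrm{Stab}_{GL_{N(m)}}(y)\big)$, which is the meaning of the Lie algebra appearing in the statement.

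Next I would use the standard description of the lifted action on the relative log-cotangent bundle (the same construction already invoked in the proof of Proposition \ref{rellog101} and in \S\ref{Reduc}): the tautological log-symplectic form on $\Omega^{log}_{\mathcal Y^m_S/S}$ is $GL_{N(m)}$-invariant, the action is Hamiltonian, and the moment map is given by the usual pairing formula $\mu_{log}(y,\xi)(v)=\langle \xi, a_y(v)\rangle$ for $\xi\in \Omega^{log}_{\mathcal Y^m_S/S,y}=\big(T^{log}_{\mathcal Y^m_S/S,y}\big)^{\vee}$ and $v\in\mathfrak{gl}_{N(m)}$. Consequently the restriction $\mu_{log}\big|_{\Omega^{log}_{\mathcal Y^m_S/S,y}}$ is precisely the transpose $a_y^{\vee}:\big(T^{log}_{\mathcal Y^m_S/S,y}\big)^{\vee}\longrightarrow \mathfrak{gl}_{N(m)}^{*}$, a linear map of vector spaces. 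Its image is the linear subspace $(\ker a_y)^{\perp}\subseteq \mathfrak{gl}_{N(m)}^{*}$, which is canonically $\big(\mathfrak{gl}_{N(m)}/\mathfrak{gl}_{N(m),y}\big)^{*}$. Hence $\mu_{log}\big(\Omega^{log}_{\mathcal Y^m_S/S,y}\big)$ is a linear subspace of dimension $\dim\mathfrak{gl}_{N(m)}-\dim\ker a_y=\dim\mathfrak{gl}_{N(m)}-\dim\mathfrak{gl}_{N(m),y}=\dim\big(\mathfrak{gl}_{N(m)}/\mathfrak{gl}_{N(m),y}\big)^{*}$, which gives the claimed bound (indeed with equality).

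The one point deserving genuine care — and the step I would write out in detail — is the passage to the logarithmic, relative-over-$S$ setting: namely, that the fact that $GL_{N(m)}$ preserves the boundary divisor $\mathcal Y^m_0$ really does upgrade the classical cotangent-lift picture to an equivariant Hamiltonian action on $\Omega^{log}_{\mathcal Y^m_S/S}$ with moment map of the displayed form. Once this (together with the characteristic-zero identification $\ker a_y=\mathfrak{gl}_{N(m),y}$) is established, everything else is elementary linear algebra of dual maps and annihilators.
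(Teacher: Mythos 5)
Your overall strategy is the same as the paper's: restrict $\mu_{log}$ to the fibre $\Omega^{log}_{\mathcal Y^m_S/S,y}$, recognize it as the transpose of the infinitesimal action $a_y:\mathfrak{gl}_{N(m)}\to T^{log}_{\mathcal Y^m_S/S,y}$ (the paper's $\iota_{log}$), and count dimensions via the kernel. The stated inequality does come out of this. However, your claimed \emph{equality} rests on a false step: the identification $\ker a_y=\mathfrak{gl}_{N(m),y}$. The characteristic-zero fact you invoke identifies $\mathfrak{gl}_{N(m),y}$ with the kernel of the infinitesimal action valued in the \emph{ordinary} relative tangent space $T_{\mathcal Y^m_S/S,y}$, whereas $a_y$ takes values in the \emph{log} tangent space, and the comparison map $T^{log}_{\mathcal Y^m_S/S,y}\to T_{\mathcal Y^m_S/S,y}$ is not injective at points of the boundary divisor. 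Since $\iota=(\text{comparison map})\circ a_y$, one only gets the containment $\ker a_y\subseteq\ker\iota=\mathfrak{gl}_{N(m),y}$, which can be strict. A minimal example: $\mathbb G_m$ acting by scaling on $\mathbb A^1$ with log structure along $\{0\}$; the stabilizer of the origin is all of $\mathbb G_m$, yet the generator of the action maps to $t\,\partial_t$, which is a \emph{nonzero} element of the log tangent space at $0$, so $\ker a_0=0\neq\mathrm{Lie}(\mathbb G_m)$.

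This containment (rather than equality) is precisely what the paper's diagram chase establishes, by factoring the orbit map through $T^{log}_{\mathcal Y^m_S/S,y}$ and citing the injectivity of $\mathfrak{gl}_{N(m)}/\mathfrak{gl}_{N(m),y}\hookrightarrow T_{\mathcal Y^m_S/S,y}$, and it is the reason the lemma is stated as an inequality. Your proof of the inequality survives the correction: from $\dim\ker a_y\leq\dim\mathfrak{gl}_{N(m),y}$ and rank--nullity for $a_y^{\vee}$ one gets $\dim\mu_{log}\bigl(\Omega^{log}_{\mathcal Y^m_S/S,y}\bigr)=\dim\mathfrak{gl}_{N(m)}-\dim\ker a_y\geq\dim\bigl(\mathfrak{gl}_{N(m)}/\mathfrak{gl}_{N(m),y}\bigr)^{*}$. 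But you should delete the parenthetical ``indeed with equality'' and replace the equality of kernels with the containment.
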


\begin{proof}
Consider the diagram
\begin{equation}
\begin{tikzcd}
0\arrow{r} & \mathfrak{gl}_{N(m), y}\arrow{r} & \mathfrak{gl}_{N(m)}\arrow{r}\arrow{d} & \frac{\mathfrak{gl}_{N(m)}}{\mathfrak{gl}_{N(m), y}}\arrow{r} & 0\\
0\arrow{r} & K\arrow{r} & T^{log}_{\mathcal Y^m_S/S,y}\arrow{r} & T_{\mathcal Y^m_S/S,y}
\end{tikzcd}
\end{equation}

Here $K$ denotes the kernel of the natural map $T^{\log}_{\mathcal Y^m_S/S,y}\longrightarrow  T_{\mathcal Y^m_S/S,y}$. Notice that the map $\mathfrak{gl}_{N(m)}\longrightarrow T_{\mathcal Y^m_S/S,y}$ is the differential of the orbit map, and it factors through $T^{log}_{\mathcal Y^m_S/S,y}$ because the action of $GL_{N(m)}$ preserves the normal crossing divisor. It is well known that the rightmost vertical map is injective \cite[Lemma 2.4.1]{32e}. Therefore, we can complete the diagram. 
\begin{equation}
\begin{tikzcd}
0\arrow{r} & \mathfrak{gl}_{N(m), y}\arrow{r}\arrow{d} & \mathfrak{gl}_{N(m)}\arrow{r}\arrow["\iota"]{dr}\arrow["\iota_{log}"]{d} & \frac{\mathfrak{gl}_{N(m)}}{\mathfrak{gl}_{N(m), y}}\arrow{r}\arrow[hook]{d} & 0\\
0\arrow{r} & K\arrow{r} & T^{\log}_{\mathcal Y^m_S/S,y}\arrow{r} & T_{\mathcal Y^m_S/S,y}
\end{tikzcd}
\end{equation}
Therefore we see that $\ker (\iota_{\log})\subset \ker (\iota)=\mathfrak{gl}_{N(m), y}$. Since moment and log-moment maps are the dual of the maps $\iota$ and $\iota_{\log}$, therefore $\dim~ \mu_{\log}(\Omega^{\log}_{\mathcal Y^m_S/S,y})\geq ~\dim~\big (\frac{\mathfrak{gl}_{N(m)}}{\mathfrak{gl}_{N(m), y}}\big )^*$. 
\end{proof}

\

\begin{Prop}\label{new1}
The closed fibre $N^{cl}_{Gie,0}$ is an irreducible, equidimensional, almost very good stack (\cite[Definition 2.1.2]{32e}) with normal crossing singularities.
\end{Prop}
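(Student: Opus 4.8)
The plan is to extract all four properties from the forgetful morphism $\rho\colon N_{Gie}\longrightarrow \mathfrak M$ remembering only the underlying Gieseker curve (the modification of $\mathcal X$), combined with Proposition \ref{rellog101} and, for the last property, with the smooth atlas $\mathcal Y^m_S$ of \S\ref{Atlas1}. First I would observe that $\rho$ is smooth: its fibre over a point of $\mathfrak M$ is the stack of rank-$n$, degree-$d$ vector bundles on the corresponding (family of) Gieseker curve(s), and deformations of a vector bundle $\mathcal E$ on a curve $C$ are unobstructed because the obstruction space $\mathrm{H}^2(C,\mathcal End\,\mathcal E)$ vanishes; moreover this fibre has dimension $n^2(p_a(C)-1)=n^2(g-1)$, since every Gieseker curve $X_r$ has arithmetic genus $g$. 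Since Proposition \ref{rellog101} gives that $\mathfrak M\to S$ is log-smooth with closed fibre $\mathfrak M_0$ a normal crossing divisor, pure of dimension $0$ and reduced, base change identifies $N^{cl}_{Gie,0}=\rho^{-1}(\mathfrak M_0)$ with the preimage of a normal crossing divisor under a smooth morphism; this at once yields that $N^{cl}_{Gie,0}$ is reduced, has normal crossing singularities, and is equidimensional of dimension $n^2(g-1)$.

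For irreducibility I would use the pushforward morphism $\beta\colon N^{cl}_{Gie,0}\longrightarrow TF(X_0)$, $(X_r,\mathcal E)\mapsto (\pi_r)_*\mathcal E$, to the stack of torsion-free sheaves of rank $n$ and degree $d$ on the irreducible nodal curve $X_0$ (the closed fibre of $TF(\mathcal X/S)$), which is irreducible and reduced by classical results (deformation of torsion-free sheaves to vector bundles on $X_0$). By Gieseker's construction (\cite{25}, \cite{12}) the morphism $\beta$ is proper and surjective, and the fibre over a torsion-free sheaf of prescribed local type at the node $x$ is irreducible, being one of Gieseker's spaces, assembled from iterated projective bundles over the relevant Grassmannians. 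A proper surjection with irreducible fibres onto an irreducible target has irreducible source, hence $N^{cl}_{Gie,0}$ is irreducible. Equivalently — and consistently with the equidimensionality above — one may argue that the open substack $\mathrm{Bun}_{n,d}(X_0)\hookrightarrow N^{cl}_{Gie,0}$ of unbubbled Gieseker bundles is dense, i.e.\ every Gieseker bundle on $X_r$ with $r\geq 1$ is a flat limit of vector bundles on $X_0$; density together with reducedness and pure-dimensionality forces irreducibility.

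It remains to check that $N^{cl}_{Gie,0}$ is an almost very good stack in the sense of \cite[Definition 2.1.2]{32a}, i.e.\ that for every $k\geq 1$ the locus $\{x : \dim\mathrm{Aut}(x)\geq 1+k\}$ (the generic automorphism group being $\mathbb G_m$, the scalars of a stable bundle) has codimension at least $k$. I would reduce this to the generic fibre of $N_{Gie}/S$, where it is the known almost-very-goodness of $\mathrm{Bun}_{n,d}$ of a smooth curve of genus $g\geq 2$, together with the lower-semicontinuity over $S$ of these codimension estimates; it then suffices to verify the inequality stratum by stratum along $\mathfrak M_0$, i.e.\ for Gieseker bundles on a fixed $X_r$. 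On such a stratum the automorphism group is an extension of $\mathrm{Aut}(\mathcal E)$ by the torus of automorphisms of the chain of $\mathbb P^1$'s in $X_r$ fixing the two gluing points (of rank $\leq r$); since the $r$-bubbled stratum of $N^{cl}_{Gie,0}$ already has codimension $\geq r$, and since on that stratum the (semi)stability stratification of $\mathcal E$ restricted to the smooth pieces $\widetilde X_0$ and $R[r]$ contributes codimension at least the jump in $\dim\mathrm{Aut}(\mathcal E)$, the two contributions add up to the required bound; the discrepancy from genuine very-goodness is exactly this chain torus.

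I expect the irreducibility step to be the main obstacle: pinning down, from Gieseker's construction, that the fibres of $\beta$ are irreducible uniformly in the local type at the node (equivalently, the density of the unbubbled locus) is the genuinely nontrivial geometric input, whereas nc-ness, reducedness and equidimensionality follow formally from smoothness of $\rho$ and Proposition \ref{rellog101}. A close second difficulty is the stratum-by-stratum bookkeeping of the automorphism-dimension stratification along the bubbling locus needed for the almost-very-good inequality.
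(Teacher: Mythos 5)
Your reduction of the normal-crossings, reducedness and equidimensionality claims to the smoothness of the forgetful map $\rho\colon N_{Gie}\to\mathfrak M$ together with Proposition \ref{rellog101} is sound (and is arguably a cleaner way to see the normal-crossings statement than anything in the paper). But the two substantive claims of Proposition \ref{new1} --- irreducibility and the almost very good property --- are exactly the ones your proposal does not actually establish, and the routes you sketch for them have real problems. For irreducibility: the principle ``a proper surjection with irreducible fibres onto an irreducible target has irreducible source'' is false without equidimensionality of the fibres (project the union of a horizontal and a vertical $\mathbb P^1$ in $\mathbb P^1\times\mathbb P^1$ to the first factor), and the fibres of your map $\beta\colon N^{cl}_{Gie,0}\to TF(X_0)$ are emphatically not equidimensional: the fibre over a locally free sheaf is a point while the fibre over a non-locally-free sheaf is positive-dimensional. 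The irreducibility of those positive-dimensional fibres, equivalently the density of the unbubbled locus, is then left as ``the genuinely nontrivial geometric input'' --- i.e.\ it is not proven. For almost-very-goodness: the proposed reduction to the generic fibre of $N_{Gie}/S$ by ``lower-semicontinuity'' does not work, since the codimension of the strata $\{x:\dim\mathrm{Aut}(x)\geq 1+k\}$ can perfectly well drop in the special fibre; and the stratum-by-stratum claim that ``the (semi)stability stratification contributes codimension at least the jump in $\dim\mathrm{Aut}(\mathcal E)$'' is precisely the inequality to be proven, not an available input.

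The paper gets both properties at once by a different fibration. By Kausz's theorem \cite[Theorem 9.5]{16a}, the normalisation $\widetilde{N^{cl}_{Gie,0}}$ is the $KGL_n$-fibration $(E_{x_1}\times_{Bun(\tilde X_0)}E_{x_2})\times_{GL_n\times GL_n}KGL_n$ over $Bun_{n,d}(\tilde X_0)$, the stack of bundles on the \emph{normalisation} $\tilde X_0$ (a smooth curve), not over $TF(X_0)$. Irreducibility is then immediate ($KGL_n$ and $Bun(\tilde X_0)$ are irreducible), and the key point for almost-very-goodness is that the map $\tilde\pi\colon(X^{m,n},s_1,s_2,\tilde{\mathcal E},\phi)\mapsto h_*(\mathcal E(-s_1-s_2))(p_1+p_2)$ is \emph{injective on automorphism groups}, so $Y_k\subseteq\tilde\pi^{-1}(Z_k)$; since the fibres have the constant dimension $\dim KGL_n$, this gives $\codim_Y(Y_k)\geq\codim_Z(Z_k)>k$ directly from the known almost-very-goodness of $Bun_{n,d}$ of a smooth curve (\cite{3a}, \cite{32a}). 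It is the constancy of the fibre dimension --- absent in your $\beta$ --- that makes this codimension bookkeeping trivial. If you want to salvage your outline, you would need to replace $\beta$ by a map to bundles on $\tilde X_0$ (or prove the fibre-irreducibility and stratum codimension estimates for $\beta$ by hand from the Nagaraj--Seshadri description \cite{25}, which is considerably more work).
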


\begin{proof}
From \cite[Theorem 9.5]{16a},it follows that the normalization of $N^{cl}_{Gie,0}$ is a bundle $KGL_n$ on the stack of vector bundles $\Bun(\tilde{X}_0)$ of rank $n$ and degree $d$ on the normalisation $\tilde{X}_0$ of the nodal curve $X_0$. Here, $KGL_n$ denotes the compactification of $GL_n$ constructed by Kausz \cite{16b}. More precisely, let $E$ be the universal $GL_n$ bundle over $\tilde{X_0}\times \Bun(\tilde{X_0})$. Consider the $GL_n\times GL_n$ bundle $E_{x_1}\times_{\Bun(\tilde{X_0})} E_{x_2}$ over $\Bun(\tilde{X_0})$. Then the associated $KGL_n$ fibration $(E_{x_1}\times_{Bun(\tilde{X_0})} E_{x_2})\times_{GL_n\times GL_n} KGL_n\cong \widetilde{N^{cl}_{Gie,0}}$. This is the stack of Gieseker vector bundle (\cite[Definition 4.7]{16a}), which is equivalent to (\cite[Lemma 5.6]{DI}) the stack of marked Gieseker vector bundles (that is, a Gieseker vector bundle with a marked node (\cite[Definition 5.1]{DI})). It is obvious that the automorphism group of a marked Gieseker vector bundle is isomorphic to the automorphism group of the corresponding Gieseker vector bundle. Therefore, it is enough to show that the stack of Gieseker vector bundle data is smooth, equidimensional, irreducible, and almost very good. 
\vspace{.3cm}

Now let us recall that the map $\tilde{\pi}: \widetilde{N^{cl}_{Gie,0}}\longrightarrow \Bun(\tilde{X}_0)$ is given by $(X^{m,n}, s_1, s_2, \tilde{\mathcal E}, \phi)\mapsto h_*(\mathcal E(-s_1-s_2))(p_1+p_2)$, where $h: X^{m,n}\longrightarrow \tilde{X}_0$ is the modification map (\cite[Lemma 9.3]{16a}). Here $(X^{m,n}, s_1, s_2, \tilde{\mathcal E}, \phi)$ is a Gieseker vector bundle data and here $\phi$ is not a Higgs bundle but an identification between the fibres $\tilde{\mathcal E}_{s_1}\xrightarrow{\cong} \tilde{\mathcal E}_{s_2} $ (\cite[Definition 4.7]{16a}). It is straightforward to check that the induced map \[\Aut(X^{m,n}, s_1, s_2, \tilde{\mathcal E}, \phi)\longrightarrow \Aut(h_*(\mathcal E(-s_1-s_2))(p_1+p_2))\] is injective. For notational
convenience, denote $Y:= \widetilde{N^{cl}_{Gie,0}}$ and $Z:=\Bun(\tilde{X}_0)$. 

From \cite[Proposition 2.1.2]{3a} and \cite[Definition 2.1.2, Remark 2.1.3]{32e}, it follows that $Z$ is smooth, irreducible, almost very good stack. More precisely, the $\codim_Z(Z_k)>k~~\forall k\geq 1$, where $Z_k:=\{z\in Z~|~\dim~\Aut~(z)=1
+k \}$. From the fact that $\Aut(X^{m,n}, s_1, s_2, \tilde{\mathcal E}, \phi)\subset \Aut(h_*(\mathcal E(-s_1-s_2))(p_1+p_2))$, it follows that $\tilde{\pi}(Y_k)\subset Z_k$. Therefore, $Y_k\subset \tilde{\pi}^{-1}(Z_k)$ and \[\dim~Y_k\leq \dim~~\tilde{\pi}^{-1}(Z_k)=\dim ~~Z_k+\dim~~KGL_n.\] Now for all $k>0$ we have \begin{equation*}\begin{split}\codim_Y(Y_k)= & \dim~~ Y-\dim~~ Y_k \\ & \geq ~~\dim~~ Y-\dim~~ Z_k-\dim~~ KGL_n=\dim~~ Z-\dim~~ Z_k \\ & = \codim_Z(Z_k)>k.
\end{split}
\end{equation*}
Therefore, $\widetilde{N^{cl}_{Gie,0}}$ is an irreducible smooth stack, almost very good and $N^{cl}_{Gie,0}$ is an irreducible, almost very good stack. 
\end{proof}

\

\begin{Thm}\label{dim111}
The stack $M^{cl}_{Gie,0}$ is an irreducible local complete intersection of pure dimension $2\dim N_{Gie,0} + 1$.
\end{Thm}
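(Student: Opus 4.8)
The plan is to exploit the log-symplectic reduction picture set up in subsection \ref{Reduc} and reduce the statement to the structural results on $N^{cl}_{Gie,0}$ and on the moment map $\mu_{log}$. First I would work on a fixed atlas chart $\mathcal Y^m_S$ of $M^{cl}_{Gie}$, so that the open substack $[\mu_{log}^{-1}(0)/GL_{N(m)}]$ of $M^{cl}_{Gie,0}$ is identified with the log-symplectic reduction of $\Omega^{log}_{\mathcal Y^m_S/S}$ along the $GL_{N(m)}$-action, restricted to the closed fibre. Recall from Proposition \ref{new1} that $N^{cl}_{Gie,0}$ is irreducible, equidimensional, almost very good with normal crossing singularities; passing to the atlas, $\mathcal Y^m_0$ is a smooth variety and its relative log-cotangent bundle $\Omega^{log}_{\mathcal Y^m_0}$ is smooth of dimension $2\dim \mathcal Y^m_0$. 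The fibre $\mathcal Y^{H,m}_0 = \mu_{log}^{-1}(0)$ is cut out inside $\Omega^{log}_{\mathcal Y^m_0}$ by the $\dim \mathfrak{pgl}_{N(m)}$ equations of the moment map (which, as noted in \ref{Reduc}, factors through $\mathfrak{pgl}^*_{N(m)}$ and is surjective onto it).

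The key step is the dimension count. By Lemma \ref{Moment}, at any point $y$ the image $\mu_{log}(\Omega^{log}_{\mathcal Y^m_0,y})$ has dimension at least $\dim(\mathfrak{gl}_{N(m)}/\mathfrak{gl}_{N(m),y})^*$, i.e. the fibre of $\mu_{log}$ over $0$ through $y$ has dimension at most $2\dim\mathcal Y^m_0 - \dim\mathfrak{gl}_{N(m)} + \dim\mathfrak{gl}_{N(m),y}$. Using the ``almost very good'' bound $\codim(Z_k) > k$ for the stratification of $N^{cl}_{Gie,0}$ by automorphism dimension — and the corresponding stratification of $\mathcal Y^m_0$ — one sees that the strata where $\dim\mathfrak{gl}_{N(m),y}$ jumps are of codimension large enough that every irreducible component of $\mu_{log}^{-1}(0)$ has dimension exactly $2\dim\mathcal Y^m_0 - \dim\mathfrak{pgl}_{N(m)} = 2\dim\mathcal Y^m_0 - \dim\mathfrak{gl}_{N(m)} + 1$. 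Translating this back through the atlas (subtract $\dim GL_{N(m)} = \dim\mathfrak{gl}_{N(m)}$ for the quotient and recall $\dim N_{Gie,0} = \dim\mathcal Y^m_0 - \dim\mathfrak{gl}_{N(m)}$) gives pure dimension $2\dim N_{Gie,0} + 1$ for $M^{cl}_{Gie,0}$. Since $\mu_{log}^{-1}(0)$ is cut out in the smooth variety $\Omega^{log}_{\mathcal Y^m_0}$ by the expected number of equations $\dim\mathfrak{pgl}_{N(m)}$, it is a local complete intersection; local complete intersection is preserved under smooth pullback and descent, so $M^{cl}_{Gie,0}$ is l.c.i. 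Irreducibility follows because $N^{cl}_{Gie,0}$ is irreducible (Proposition \ref{new1}), the generic automorphism stabiliser is $\mathbb G_m$ so generically $\mu_{log}^{-1}(0)$ is a torsor-type fibration (vector bundle) over the smooth locus, hence irreducible, and an l.c.i.\ scheme all of whose components have the same dimension that contains a dense irreducible open is irreducible.

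The main obstacle I expect is the dimension estimate for the \emph{bad} strata: one must show that the locus in $\Omega^{log}_{\mathcal Y^m_0}$ where $\mu_{log}^{-1}(0)$ could have excess dimension — the preimages of the higher-automorphism strata $Z_k$ — does not produce components of dimension larger than the generic one. This requires combining the $\codim(Z_k) > k$ inequality from the almost-very-good property with the fibre-dimension bound of Lemma \ref{Moment} carefully: on $\tilde\pi^{-1}(Z_k)$ the moment-map fibre can be as large as $2\dim\mathcal Y^m_0 - \dim\mathfrak{gl}_{N(m)} + (1+k)$, and one needs $\codim_{\mathcal Y^m_0}(\text{stratum}) \geq k$ strictly (which is exactly what ``almost very good'' buys, via $\codim(Z_k) > k$) to conclude that the total dimension over that stratum stays $\leq 2\dim N_{Gie,0} + 1$, with equality only on the main component. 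Once this book-keeping is done, l.c.i.\ and irreducibility are formal, and the proof concludes by invoking the miraculous flatness input only implicitly — here we just need the dimension and l.c.i.\ conclusions, which feed into Theorem \ref{isotropic112}(3).
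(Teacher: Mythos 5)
Your proposal follows essentially the same route as the paper: reduce to the chart $\mu_{log}^{-1}(0)\subseteq \Omega^{log}_{\mathcal Y^m_0}$, combine Lemma \ref{Moment} with the almost-very-good codimension bound $\codim(Z_k)>k$ from Proposition \ref{new1} to rule out excess-dimensional components over the higher-automorphism strata, deduce l.c.i.\ from the expected-dimension count, and get irreducibility from the dense open locus that is a vector bundle over the irreducible $N_{Gie,0}$. Apart from a small bookkeeping slip (the moment-map fibre bound over a single point $y$ is $\dim\mathcal Y^m_0-\dim\mathfrak{gl}_{N(m)}+\dim\mathfrak{gl}_{N(m),y}$, not $2\dim\mathcal Y^m_0-\cdots$), this matches the paper's proof.
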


\begin{proof}
It is enough to show that $\mu_{\log,0}^{-1}(0)=\mathcal Y^{H,m}_0$ is a local complete intersection of dimension $2\cdot \dim~~N_{Gie,0}+1+\dim~GL_{N(m)}$. Here, $\mu_{\log,0}$
denotes the restriction of $\mu_{\log}$ to the closed fiber of $\Omega^{\log}_{\mathcal Y^m_S/S}\longrightarrow S$. Since the map $\mu_{\log,0}: \Omega^{\log}_{\mathcal Y^m_0}\longrightarrow \mathfrak{pgl}^*_{N(m)}$ is surjective, the dimension of the generic fiber is equal to $\dim~\mathcal Y^m_0-\dim~\mathfrak{gl}^*_{N(M)}+1$. Therefore, for every irreducible component $I$ of $\mu^{-1}_{\log,0}(0)$, we have 
\begin{equation}\label{geq}
\dim~I\geq \dim~\mathcal Y^m_0-\dim~\mathfrak{gl}^*_{N(M)}+1.
\end{equation}
Suppose that there exists a component $I\subseteq \mu^{-1}_{\log,0}(0)$ that does not dominate $N_{Gie,0}$. In that case $(q\circ p)(I)\subseteq N_k$ for some $k\geq 1$, where $N_k:=\{x\in N:=N^{cl}_{Gie, 0} | \dim~Stab(x)=k+1\}$ and $p$ is the restriction to $\mu^{-1}_{\log, 0}(0)$ of the projection map $\pi: \Omega^{\log}_{\mathcal Y^m_S/S}\rightarrow \mathcal Y^m_0$. 

\begin{equation}
\begin{tikzcd}
   I\subseteq \mu^{-1}_{\log, 0}(0)\arrow["p"]{drr} \arrow[hook]{rr} && \Omega^{log}_{\mathcal Y^m_0}\arrow["\pi"]{d}\arrow{dr}\arrow["\mu_{\log, 0}"]{r} & \mathfrak{pgl}^*_{N(m)} \\
    && \mathcal Y^m_0 \arrow["q"]{r}  & N:=N^{cl}_{Gie, 0}
\end{tikzcd}
\end{equation}

A priori, the generic point of $(q\circ p)(I)$ may belong to the singular locus of $N$. But, in any case, we have 
\begin{align*}
 \dim~I &\leq \underbrace{\dim~p(I)}_{\text{dim of the base}}+\underbrace{\dim~\mathcal Y^m_0-\dim~\big (\frac{\mathfrak{gl}_{N(m)}}{\mathfrak{gl}_{N(m),y}}\big )^*}_{\text{upper bound of the dim of the fibre at the generic point of p(I)}}
 ~~~~(\text{using Lemma}~~\ref{Moment})\\
&< (\dim~\mathcal Y^m_0-k)+\dim~\mathcal Y^m_0-\dim~\big (\frac{\mathfrak{gl}_{N(m)}}{\mathfrak{gl}_{N(m),y}}\big )^*\hspace{10em}\\
&(\text{because}~ N_{Gie,0}~\text{ is a almost very good stack})\\
&=2\cdot \dim~\mathcal Y^m_0-\dim~\mathfrak{gl}^*_{N(m)}+(\dim~\mathfrak{gl}^*_{N(m),y}-(k+1))+1\hspace{5em}\\
&=2\cdot \dim~\mathcal Y^m_0-\dim~\mathfrak{gl}^*_{N(m)}+1\hspace{16em}\\
&(\text{because} ~~\dim~\mathfrak{gl}^*_{N(m),y}=(k+1), \text{by definition})\\
&\leq \dim~~I  ~~~~(\text{from}~~\eqref{geq})
\end{align*}
where $y$ denotes the generic point of $p(I)$. But this is a contradiction. Therefore, $k$ must be equal to $0$, which implies that every component $I$ dominates $\mathcal Y^m_0$ and $\mu^{-1}_{\log,0}(0)$ is an equi-dimensional of dimension $2\cdot \dim~\mathcal Y^m_0-\dim~\mathfrak{gl}^*_{N(m)}+1$. Once we know the dimension, it is easy to see that $\mu^{-1}_{log,0}(0)$ is a local complete intersection. Therefore, $M^{cl}_{Gie,0}$ is an equidimensional local complete intersection stack. 

We define \[M^{cl,sm}_{Gie,0}:=\{(\mathcal E, \phi)\in M^{cl}_{Gie,0} \ \ | \ \  \mathbb H^2(\mathcal C(\mathcal E, \phi))\cong \mathbb k\},\] where $\mathcal C(\mathcal E, \phi))$ denotes the complex $[\End \mathcal E\xrightarrow{[-, \phi]} \End \mathcal E\otimes \omega ]$. This substack is obviously an open subset of $M^{cl}_{Gie,0}$ because the minimum dimension of $\mathbb H^2(\mathcal C(\mathcal E, \phi))$ for any Gieseker-Higgs bundle is $1$. Using the fact that $N^{cl}_{Gie,0}$ is an irreducible, almost very good stack with a generic stabilizer of dimension $1$ and the fact that $M^{cl}_{Gie,0}$ is equi-dimensional, it is not difficult to see that $M^{cl, sm}_{Gie,0}$ is also dense in $M^{cl}_{Gie,0}$. 

But note that $M^{cl, sm}_{Gie,0}$ is a vector bundle over $N_{Gie, 0}$ with fibers $H^0(\End~\mathcal E\otimes \omega)$ (which remains constant in this locus). Since $N_{Gie, 0}$ is irreducible, $M^{cl,sm}_{Gie,0}$ is irreducible. Therefore, $M^{cl}_{Gie,0}$ is also irreducible because $M^{cl,sm}_{Gie,0}$ is dense in $M^{cl}_{Gie,0}$. 
\end{proof}

\begin{Cor} \label{LCI}
The classical Artin stack $M^{cl}_{Gie}$ is a local complete intersection of pure dimension $2\cdot \dim N_{Gie, 0}+1$.
\end{Cor}

\begin{proof}
Since the geometric fibres of the morphism $M^{cl}_{Gie} \to S$ all have (pure) dimension $2n^2(g-1)+1$, it follows that
\[
\dim M^{cl}_{Gie} = 2n^2(g-1) + 1 + \dim S = 2n^2(g-1) + 2.
\]

As explained in:

-[Subsection \ref{Atlas1}]\[
\mathcal{Y}_S := \coprod_{N(m),\, m \geq 0} \mathcal{Y}^{m}_S \longrightarrow N_{Gie}
\]
is an atlas. 

-[Subsubsection \ref{Reduc}] $M^{cl}_{Gie}$ admits an open cover by quotient stacks of the form 

$$[\mathcal Y^{H, m}_S/GL_{N(m)}]=[\mu_{\log}^{-1}(0)/GL_{N(m)}],$$ 

where $\mu_{\log} : \Omega^{\log}_{\mathcal{Y}^m_S/S} \to \mathfrak{pgl}_{N(m)}^*$ is the moment map associated to the $GL_{N(m)}$-action on a given atlas component $\mathcal{Y}^m_S$ of $N_{Gie}$.

-[Subsection \ref{at2}] The disjoint union
\[
\mathcal{Y}^H_S := \coprod_{N(m),\, m \geq 0} \mathcal{Y}^{H,m}_S \longrightarrow M^{cl}_{Gie}
\]
is an atlas. 

The dimension of the source $\Omega^{\log}_{\mathcal{Y}^m_S/S}$ of the map $\mu_{\log} : \Omega^{\log}_{\mathcal{Y}^m_S/S} \to \mathfrak{pgl}_{N(m)}^*$ is $2n^2(g-1) + N(m)^2 + \dim S$. Thus, the expected dimension of the zero locus $\mu_{\log}^{-1}(0)$ is $2n^2(g-1) + N(m)^2 + 1 + \dim S$. But we have already established that $\dim M^{cl}_{Gie} = 2n^2(g-1) + 2$, so the actual dimension of $\mu_{\log}^{-1}(0)$ is equal to its expected dimension $\dim \mu_{\log}^{-1}(0) = 2n^2(g-1) + N(m)^2 + 1 + \dim S$. Since $\Omega^{\log}_{\mathcal{Y}^m_S/S}$ is smooth, the zero locus $\mu_{\log}^{-1}(0)$ is therefore a local complete intersection. It follows that $M^{cl}_{Gie}$ is itself a local complete intersection.
\end{proof}

\

\nocite{*}
\bibliography{biblio}
\bibliographystyle{amsalpha}

\noindent
Oren Ben-Bassat \\
Department of Mathematics \\
University of Haifa \\
Mount Carmel \\
Haifa, 3498838, ISRAEL\\
ben-bassat@math.haifa.ac.il\\

\noindent
Sourav Das \\
Department of Mathematics\\
Indian Institute of Science Education and Research Tirupati\\
Srinivasapuram, Venkatagiri Road, Jangalapalli Village, Panguru\\ (G.P), Yerpedu Mandal, Tirupati District, Andhra Pradesh\\
517619, INDIA\\
sdas6565@gmail.com\\ 

\noindent
Tony Pantev \\
Department of Mathematics \\
209 S. 33rd Street \\
Philadelphia, PA, 19104, USA \\
tpantev@math.upenn.edu

\end{document}